\numberwithin{equation}{section}
\numberwithin{figure}{section}
\theoremstyle{plain}
\newtheorem{thm}{\protect\theoremname}[section]
\theoremstyle{plain}
\theoremstyle{plain}
\newtheorem{lem}[thm]{Lemma}
\theoremstyle{definition}
\newtheorem{defn}[thm]{\protect\definitionname}
\theoremstyle{definition}
\newtheorem{exmp}[thm]{Example}
\theoremstyle{definition}
\newtheorem{rem}[thm]{\protect\remarkname}
\newcommand\X{\mathbb X}
\newcommand\Y{\mathbb Y}
\title{Involutive knot Floer homology and bordered modules}
\author{Sungkyung Kang}
\address{Center for Geometry and Physics, Institute for Basic Science}
\email{sungkyung38@icloud.com}
\subjclass[2020]{Primary 57K18; Secondary 57K31, 57R58}
\keywords{Involutive Heegaard Floer homology, Dehn surgery}
\providecommand{\corollaryname}{Corollary}
\providecommand{\definitionname}{Definition}
\providecommand{\remarkname}{Remark}
\providecommand{\theoremname}{Theorem}
\begin{document}

\begin{abstract}
We prove that, up to local equivalences, a suitable truncation of the involutive knot Floer homology of a knot in $S^3$ and the involutive bordered Heegaard Floer theory of its complement determine each other. 
In particular, given two knots $K_1$ and $K_2$, we prove that the $\mathbb{F}_2[U,V]/(UV)$-coefficient involutive knot Floer homology of $K_1 \sharp -K_2$ is $\iota_K$-locally trivial if $\widehat{CFD}(S^3 \backslash K_1)$ and $\widehat{CFD}(S^2 \backslash K_2)$ satisfy a certain condition which can be seen as the bordered counterpart of $\iota_K$-local equivalence. 
We further establish an explicit algebraic formula that computes the hat-flavored truncation of the involutive knot Floer homology of a knot from the involutive bordered Floer homology of its complement. It follows that there exists an algebraic satellite operator defined on the local equivalence group of knot Floer chain complexes, which can be computed explicitly up to a suitable truncation. 
\end{abstract}
\maketitle

\setcounter{tocdepth}{1}
\tableofcontents

\section{Introduction}

Given a closed, connected, and oriented 3-manifold $Y$, the minus-flavored Heegaard Floer theory, defined by Ozsv\'{a}th and Szab\'{o} \cite{ozsvath2004holomorphic}, associates to $Y$ a chain complex $CF^-(Y)$ over the ring $\mathbb{F}_2[U]$, whose homotopy type is an invariant of the oriented diffeomorphism class of $Y$. Furthermore, if we are given a knot $K$ inside $Y$, then the knot Floer theory \cite{ozsvath2008knot,zemke2019link} associates to $K$ a homotopy class of a chain complex $CFK_{UV}(Y,K)$ over the ring $\mathbb{F}_2[U,V]$, from which $CF^-(Y)$ can be recovered by taking the specialization $(U,V)=(1,0)$, or equivalently, $(U,V)=(0,1)$.

Like Seiberg-Witten Floer homology, whose intrinsic $\mathbf{Pin}(2)$-symmetry was used by Manolescu \cite{manolescu2016pin} to disprove the triangulation conjecture in high dimensions, Heegaard Floer theory has an intrinsic $\mathbb{Z}_2$-symmetry, which is induced by the involution
\[
(\Sigma,\boldsymbol\alpha,\boldsymbol\beta,z)\mapsto (-\Sigma,\boldsymbol\beta,\boldsymbol\alpha,z)
\]
on the space of pointed Heegaard diagrams representing the given 3-manifold $Y$. This action, which preserves all relevant counts of holomorphic disks, induces a homotopy-involution $\iota_Y$ on $CF^-(Y)$, which is well-defined up to homotopy, as observed first in \cite{hendricks2017involutive}. Involutive Heegaard Floer theory exploits this involution to give new 3-manifold invariants to define new homology cobordism invariants. Those invariants were then used extensively to solve various problems regarding the structures of homology cobordism groups and knot concordance groups \cite{dai2018infinite,hendricks2018connected,hom2020linear,hendricks2020surgery,alfieri2020connected,hendricks2021quotient}.

Moreover, as observed by Hendricks and Manolescu \cite{hendricks2017involutive}, a similar construction can also be applied to knot Floer theory. Recall that knot Floer homology starts by representing a pair $(Y,K)$ of a 3-manifold $Y$ and an oriented knot $K\subset Y$ as a doubly pointed Heegaard diagram, i.e. Heegaard diagram with two basepoints. Then we have symmetry
\[
(\Sigma,\boldsymbol\alpha,\boldsymbol\beta,z,w)\mapsto (-\Sigma,\boldsymbol\beta,\boldsymbol\alpha,w,z)
\]
on the space of doubly pointed Heegaard diagrams representing $(Y,K)$. However, since the basepoints are swapped to compensate the change of orientation on $K$ occurred by reversing the given orientation on the Heegaard surface $\Sigma$, a half-twist along $K$ is needed to define a well-defined homotopy \emph{skew}-autoequivalence $\iota_K$ of $CFK_{UV}(Y,K)$. Due to the presence of a half-twist in the definition of $\iota_K$, it is no longer a homotopy involution, but satisfies the condition 
\[
\iota_K ^2 \sim \xi_K,
\]
where $\xi_K$ denotes the Sarkar map along $K$. The theory of $CFK_{UV}(Y,K)$ together with $\iota_K$ is called involutive knot Floer homology, which was used to prove the existence of a linearly independent infinite family of rationally slice knots in \cite{hom2020linear}.

On the other hand, given a compact oriented 3-manifold $M$ with a suitably parametrized torus boundary, bordered Heegaard Floer theory \cite{lipshitz2016bordered} associates to $M$ a differential module $\widehat{CFD}(M)$ and an $A_\infty$-module $\widehat{CFA}(M)$ over the torus algebra $\mathcal{A}(T^2)$. When $M$ is the 0-framed exterior $S^3 \backslash K$ of a knot $K\subset S^3$, we know from \cite{kotelskiy2020mnemonic} that the homotopy type of those modules is determined by the homotopy type of the truncation $CFK_{\mathcal{R}}(S^3,K)$ of $CFK_{UV}(S^3,K)$ by taking $UV=0$, and vice versa. Furthermore, we know from \cite{hendricks2019involutivebordered} that mimicking the construction of involutive Heegaard Floer theory defines homotopy equivalences
\[
\begin{split}
    \iota_M &:\widehat{CFDA}(\mathbf{AZ})\boxtimes \widehat{CFD}(M)\rightarrow \widehat{CFD}(M), \\
    \iota_M &:\widehat{CFA}(M)\boxtimes \widehat{CFDA}(\overline{\mathbf{AZ}})\rightarrow \widehat{CFA}(M).
\end{split}
\]
Hence, it is natural to ask how the knot involution $\iota_K$ on $CFK_{\mathcal{R}}(S^3,K)$ is related to the bordered involution $\iota_{S^3 \backslash K}$ of its 0-framed knot complement. The following theorem answers this question in the coarse affirmative, by showing that $\iota_K$ and $\iota_{S^3 \backslash K}$ determine each other up to a certain equivalence relation; this equivalence relation is called the $\iota_K$-local equivalence, which can be seen as the involutive algebraic counterpart of knot concordance.

\begin{thm}
\label{thm:mainthm1}
Given two knots $K_1$ and $K_2$, consider the involutions $\iota_{K_1 \sharp -K_2}$ of $CFK_{\mathcal{R}}(S^3,K_1 \sharp -K_2)$, as well as any choice of bordered involutions $\iota_{S^3 \backslash K_1}\in \mathbf{Inv}_D(S^3 \backslash K_1)$ and $\iota_{S^3 \backslash K_2} \in \mathbf{Inv}_D(S^3 \backslash K_2)$. Then $(CFK_{\mathcal{R}}(S^3,K_1\sharp -K_2),\iota_{K_1\sharp -K_2})$ is $\iota_K$-locally equivalent to the trivial complex if and only if there exists a type-D morphism
\[
g:\widehat{CFD}(S^3 \backslash K_1)\rightarrow \widehat{CFD}(S^3 \backslash K_2)
\]
between type-D modules of 0-framed knot complements, such that the diagram
\[
\xymatrix{
\widehat{CFDA}(\mathbf{AZ})\boxtimes \widehat{CFD}(S^3 \backslash K_1) \ar[d]^{\mathbf{id}\boxtimes g}  \ar[rr]^{\iota_{S^3 \backslash K_1}} & &\widehat{CFD}(S^3\backslash K_1)\ar[d]^{g} \\
\widehat{CFDA}(\mathbf{AZ}) \boxtimes \widehat{CFD}(S^3 \backslash K_2) \ar[rr]^{\iota_{S^3 \backslash K_2}} & & \widehat{CFD}(S^3 \backslash K_2)
}
\]
is homotopy-commutative and the induced chain map 
\[
\begin{split}
    \widehat{CF}(S^3) &\simeq \widehat{CFA}(T_\infty)\boxtimes \widehat{CFD}(S^3 \backslash K_1) \\
    &\xrightarrow{\mathbf{id}\boxtimes g} \widehat{CFA}(T_\infty)\boxtimes \widehat{CFD}(S^3 \backslash K_2) \simeq \widehat{CF}(S^3)
\end{split}
\]
is a homotopy equivalence, and a similar type-D morphism also exists in the opposite direction. Here, $T_\infty$ denotes the $\infty$-framed solid torus, and $S^3 \backslash K_1$ and $S^3 \backslash K_2$ are endowed with the 0-framing on their boundaries. Furthermore, the statement also holds if ``any choice of bordered involutions'' is replaced with ``some choice of bordered involutions''.
\end{thm}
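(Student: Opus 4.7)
The plan is to prove the equivalence by translating the $\iota_K$-local triviality condition through the involutive analogue of the Kotelskiy--Watson--Zibrowius correspondence between $CFK_{\mathcal{R}}(S^3,K)$ and $\widehat{CFD}(S^3\backslash K)$, established earlier in the paper, enhanced to track the knot involution $\iota_K$ and its bordered counterpart $\iota_{S^3\backslash K}$. This reduces the problem to transporting morphisms, together with their homotopy-commutation with the involutions, across the two sides of the dictionary.

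First, I would use the involutive connected-sum formula together with the compatibility of the knot involution with mirroring---so that $(CFK_\mathcal{R}(S^3,-K),\iota_{-K})$ is identified with the dual of $(CFK_\mathcal{R}(S^3,K),\iota_K)$, up to the appropriate Sarkar/half-twist correction---to rephrase the hypothesis. Concretely, $(CFK_{\mathcal{R}}(S^3,K_1\sharp -K_2),\iota_{K_1\sharp -K_2})$ is $\iota_K$-locally trivial if and only if $(CFK_{\mathcal{R}}(S^3,K_1),\iota_{K_1})$ and $(CFK_{\mathcal{R}}(S^3,K_2),\iota_{K_2})$ are $\iota_K$-locally equivalent, i.e.\ there exist chain maps in both directions that are $\iota_K$-equivariant up to homotopy and that are local maps in the knot Floer sense.

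Next, I would apply the involutive dictionary to each direction of this local equivalence. Under the dictionary, an $\iota_K$-equivariant (up to homotopy) map between the truncated knot Floer complexes corresponds to a type-D morphism $g:\widehat{CFD}(S^3\backslash K_1)\to \widehat{CFD}(S^3\backslash K_2)$ for which the square in the theorem statement, involving $\iota_{S^3\backslash K_i}$ and $\widehat{CFDA}(\mathbf{AZ})$, is homotopy-commutative. The local property---that the induced map is an equivalence after the appropriate specialization---becomes, by the pairing theorem with the $\infty$-framed solid torus $T_\infty$ (whose gluing to a $0$-framed knot complement recovers $S^3$), precisely the condition that $\mathbf{id}\boxtimes g$ induce a homotopy equivalence $\widehat{CF}(S^3)\to \widehat{CF}(S^3)$. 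Bidirectionality of the local equivalence yields the analogous morphism in the opposite direction.

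Finally, for the ``any/some'' clause, I would invoke the fact that any two bordered involutions in $\mathbf{Inv}_D(S^3\backslash K)$ are conjugate via a homotopy autoequivalence of $\widehat{CFD}(S^3\backslash K)$, so that pre- or post-composing $g$ with such an autoequivalence converts a morphism working for one choice into one working for any other, while preserving both the diagram's homotopy-commutativity and the $\widehat{CF}(S^3)$-equivalence condition. The main obstacle will be making the involutive dictionary functorial on morphisms in a way that correctly pairs the half-twist implicit in the definition of $\iota_K$ with the $\widehat{CFDA}(\mathbf{AZ})$-factor on the bordered side, and that matches the knot Floer notion of a local map with the induced $\widehat{CF}(S^3)$-equivalence under the $T_\infty$ pairing; once this functoriality is in place, the theorem follows by reading off the two sides of the dictionary.
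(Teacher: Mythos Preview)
Your proposal has the right high-level shape but contains a circular gap: the ``involutive dictionary'' you invoke---a functorial correspondence on morphisms that matches $\iota_K$-equivariance on the knot Floer side with homotopy-commutativity of the $\mathbf{AZ}$-square on the bordered side, and matches locality with the $T_\infty$-pairing condition---is not established prior to this theorem. Building this correspondence is precisely the technical content of the proof, and the paper does it by quite different means than a direct enhancement of the Kotelskiy--Watson--Zibrowius correspondence. For the forward direction the paper does not transport a local map $CFK_{\mathcal{R}}(K_1)\to CFK_{\mathcal{R}}(K_2)$ through any such dictionary; instead it passes through $0$-surgery, using the morphism-space pairing $\widehat{CF}(S^3_0(K_2\sharp -K_1))\simeq \mathbf{Mor}(\widehat{CFD}(S^3\backslash K_1),\widehat{CFD}(S^3\backslash K_2))$, the involutive mapping cone formula, and an identification of the relevant cobordism map (\Cref{lem:cobordismlem}, \Cref{lem:localmaplem}) to produce an $\iota$-invariant class of grading $\tfrac12$ in $\widehat{HF}(S^3_0(K_2\sharp -K_1))$ hitting the generator of $\widehat{HF}(S^3)$; the morphism $g$ is then read off from this class. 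For the converse the paper builds a free-basepoint involutive knot Floer theory and a specific bordered diagram $\X$, shows that pairing the bordered involution with $\iota_{\X}$ recovers $\iota_{K,z_{free}}$ (\Cref{lem:freestabinv}), and then extracts an $\iota_K$-local map over $\mathcal{R}$ via \Cref{lem:eliminationlem}.

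Two further specific issues. First, your opening reduction---that $\iota_K$-local triviality of $K_1\sharp -K_2$ over $\mathcal{R}$ is equivalent to $\iota_K$-local equivalence of $K_1$ and $K_2$ over $\mathcal{R}$---requires the connected-sum formula and a group structure for $\iota_K$-complexes over $\mathcal{R}=\mathbb{F}_2[U,V]/(UV)$; the paper explicitly notes this is unclear, since the basepoint actions $\Phi,\Psi$ entering the Sarkar correction may not be determined by the $\mathcal{R}$-coefficient differential. The paper sidesteps this by performing the analogous reduction on the bordered side (box-tensoring with $\widehat{CFDA}(T_\infty\backslash P)$ for the connected-sum pattern). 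Second, your treatment of the ``any/some'' clause assumes that any two elements of $\mathbf{Inv}_D(S^3\backslash K)$ are related by conjugation by a homotopy autoequivalence in a way that transports the commuting square; this is not established (it would essentially require bordered naturality), and the paper instead bakes independence of the choice of bordered involution directly into \Cref{lem:freestabinv} and the Hendricks--Lipshitz pairing result.
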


We now consider involutive knot Floer homology for satellite knots. Given two knots $K_1$ and $K_2$ whose knot Floer chain complexes are locally equivalent, it is very unclear whether the satellite knots $P(K_1)$ and $P(K_2)$ should also have locally equivalent knot Floer chain complexes, where $P$ is any pattern in $S^1 \times D^2$. Using \Cref{thm:mainthm1}, we prove the existence of a satellite operator in the local equivalence group of knot Floer chain complexes. 

\begin{thm}
\label{thm:mainthm2}
Let $K_1,K_2$ be knots such that $(CFK_{\mathcal{R}}(S^3,K_1\sharp -K_2),\iota_{K_1\sharp -K_2})$ is $\iota_K$-locally equivalent to the trivial complex. Then for any pattern $P\subset S^1 \times D^2$, $(CFK_{\mathcal{R}}(S^3,P(K_1)\sharp -P(K_2)),\iota_{P(K_1)\sharp -P(K_2)})$ is also $\iota_K$-locally equivalent to the trivial complex.
\end{thm}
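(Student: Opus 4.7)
The plan is to apply \Cref{thm:mainthm1} in both directions: first translate the local triviality hypothesis into the bordered statement about the companion complements, then tensor with the bordered bimodule of the pattern complement to transfer the condition to the satellite complements, and finally invoke \Cref{thm:mainthm1} in the opposite direction.

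Let $C_P := (S^1 \times D^2)\backslash P$ denote the pattern complement, regarded as a bordered 3-manifold with two parametrized torus boundary components. For any knot $K\subset S^3$ equipped with its 0-framing, the pairing theorem for bordered modules gives
\[
\widehat{CFD}(S^3 \backslash P(K)) \simeq \widehat{CFDA}(C_P)\boxtimes \widehat{CFD}(S^3 \backslash K).
\]
Applying the forward direction of \Cref{thm:mainthm1} to the hypothesis produces type-D morphisms $g:\widehat{CFD}(S^3 \backslash K_1) \rightarrow \widehat{CFD}(S^3 \backslash K_2)$ and one in the reverse direction, each homotopy-commuting with the chosen bordered involutions and inducing a homotopy equivalence on $\widehat{CF}(S^3) \simeq \widehat{CFA}(T_\infty)\boxtimes \widehat{CFD}(S^3\backslash K_i)$. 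I then define
\[
g_P := \mathbf{id}_{\widehat{CFDA}(C_P)}\boxtimes g : \widehat{CFD}(S^3 \backslash P(K_1)) \rightarrow \widehat{CFD}(S^3 \backslash P(K_2)),
\]
with a reverse morphism defined analogously, and attempt to verify the two bordered conditions of \Cref{thm:mainthm1} for this candidate.

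The hat-level condition is relatively soft: after capping with $T_\infty$, the tensor product $\widehat{CFA}(T_\infty)\boxtimes \widehat{CFDA}(C_P)$ computes the type-$A$ module of the manifold obtained by $\infty$-filling the outer boundary of $C_P$, so the $\widehat{CF}(S^3)$-level map induced by $\mathbf{id}\boxtimes g_P$ reduces to the map induced by $\mathbf{id}\boxtimes g$ paired against a new type-$A$ module, which is a homotopy equivalence by hypothesis. The more delicate condition is homotopy commutativity of the $g_P$-square with the bordered involutions. The natural candidate is to let $\iota_{S^3 \backslash P(K_i)}$ be represented by a box tensor product of a suitable ``pattern involution'' on $\widehat{CFDA}(C_P)$ with $\iota_{S^3 \backslash K_i}$, suitably intertwined with the $\mathbf{AZ}$ bimodule; commutativity of the $g_P$-square then follows formally from that of the $g$-square by tensoring with $\mathbf{id}_{\widehat{CFDA}(C_P)}$.

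The main obstacle, accordingly, will be the naturality statement asserting that $\iota_{S^3 \backslash P(K_i)}$ admits a representative of this tensor form. This should be handled by arranging a Heegaard diagram for $S^3\backslash P(K_i)$ that splits cleanly along the pattern-companion torus so that the half-Dehn twist entering the definition of the knot involution can be localized within the companion piece, together with the functoriality of Hendricks-Lipshitz's bordered involutive construction under such bordered splittings; alternatively, one may construct an involutive bimodule structure on $\widehat{CFDA}(C_P)$ intrinsically and verify its behaviour under gluing. Once this compatibility is in hand, the reverse direction of \Cref{thm:mainthm1} delivers the desired $\iota_K$-local triviality of $(CFK_{\mathcal{R}}(S^3, P(K_1)\sharp -P(K_2)), \iota_{P(K_1)\sharp -P(K_2)})$.
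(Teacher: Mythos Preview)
Your overall strategy coincides with the paper's: apply \Cref{thm:mainthm1} forward to obtain $g$, define $g_P=\mathbf{id}_{\widehat{CFDA}(C_P)}\boxtimes g$, verify the two bordered conditions of \Cref{thm:mainthm1} for $g_P$, and apply \Cref{thm:mainthm1} backward. The paper carries out exactly this plan. However, two steps in your sketch are left imprecise in ways that matter.

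First, the $\widehat{CF}(S^3)$-condition. You assert that after capping with $T_\infty$ the induced map ``reduces to the map induced by $\mathbf{id}\boxtimes g$ paired against a new type-$A$ module, which is a homotopy equivalence by hypothesis.'' But the hypothesis only says the pairing with $\widehat{CFA}(T_\infty)$ is an equivalence; pairing $g$ with an arbitrary type-$A$ module need not be one. The missing observation, which the paper makes explicit, is the topological identity $T_\infty \cup_{\partial_{P}} C_P \simeq T_\infty$: filling the \emph{pattern} boundary of $C_P$ with the $\infty$-framed solid torus returns the original solid torus. Hence $\widehat{CFA}(T_\infty)\boxtimes \widehat{CFDA}(C_P)\simeq \widehat{CFA}(T_\infty)$ and the ``new'' type-$A$ module is the old one. (Your phrase ``$\infty$-filling the outer boundary'' has the wrong boundary; the outer boundary is glued to $S^3\backslash K_i$.)

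Second, the involutive condition. You correctly identify this as the crux but leave it at ``should be handled.'' The paper resolves it by constructing, just before the proof, a bordered involution $\iota_N\in\mathbf{Inv}(C_P)$ on the bimodule via the two-boundary version of the Hendricks--Lipshitz construction (conjugating with $\mathbf{AZ}$ on the type-$D$ side and $\overline{\mathbf{AZ}}$ on the type-$A$ side), and then showing---by running the argument of \cite[Theorem~5.1]{hendricks2019involutivebordered}---that the box-tensored map $\iota_{\iota_{S^3\backslash K_i},\iota_N}$ actually lies in $\mathbf{Inv}_D(S^3\backslash P(K_i))$. Once this is known, homotopy-commutativity of the $g_P$-square is, as you say, formal. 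Note also that your reference to a ``half-Dehn twist'' is misplaced here: the relevant involutions are the bordered $3$-manifold involutions $\iota_{S^3\backslash K_i}$, which involve no half-twist; the half-twist enters only in the knot involution $\iota_K$, and that is entirely absorbed into the statement of \Cref{thm:mainthm1}.
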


A very natural question is then how can one explicitly compute $\iota_K$ from $\iota_{S^3 \backslash K}$. Using the bordered quasi-stabilization constructions, we prove the following theorem which provides a formula to compute the hat-flavored truncation of $\iota_K$ from $\iota_{S^3\backslash K}$ up to orientation reversal.
\begin{thm}
\label{thm:mainthm4}
Let $\nu$ be the longitudinal knot in the $\infty$-framed solid torus $T_\infty$. Then there exists a type-D morphism $$f:\widehat{CFD}(T_\infty,\nu)\rightarrow \widehat{CFDA}(\mathbf{AZ})\boxtimes \widehat{CFD}(T_\infty,\nu)$$ such that for any knot $K$ and for any choice of $\iota_{S^3 \backslash K}\in \mathbf{Inv}_D(S^3 \backslash K)$, the induced map 
\[
\begin{split}
    \widehat{CFK}(S^3,K) &\simeq \widehat{CFA}(S^3 \backslash K)\boxtimes \widehat{CFD}(T_\infty,\nu) \\ 
    &\xrightarrow{\iota_{S^3 \backslash K}^{-1} \boxtimes f} \widehat{CFA}(S^3 \backslash K)\boxtimes \widehat{CFDA}(\overline{\mathbf{AZ}})\boxtimes \widehat{CFDA}(\overline{\mathbf{AZ}})\boxtimes \widehat{CFD}(T_\infty,\nu) \\
    &\xrightarrow{\simeq} \widehat{CFA}(S^3 \backslash K)\boxtimes \widehat{CFD}(T_\infty,\nu) \simeq \widehat{CFK}(S^3,K)
\end{split}
\]
is homotopic to the truncation of either $\iota_K$ or its homotopy inverse $\iota^{-1}_K$ to the hat-flavored complex  $\widehat{CFK}(S^3,K)$ under the natural identification 
\[
\widehat{CFA}(T_\infty,\nu)\boxtimes \widehat{CFD}(S^3\backslash K) \simeq \widehat{CFK}(S^3,K)
\]
induced by the pairing theorem \cite[Theorem 11.19]{lipshitz2018bordered}, where $S^3 \backslash K$ is endowed with the 0-framing on its boundary.
\end{thm}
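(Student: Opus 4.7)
The plan is to decompose a doubly-pointed Heegaard diagram for $(S^3,K)$ along the splitting torus separating the knot complement from the standard solid torus, and to track the symmetry defining $\iota_K$ across this decomposition. Since the pairing theorem gives $\widehat{CFK}(S^3,K) \simeq \widehat{CFA}(S^3\backslash K) \boxtimes \widehat{CFD}(T_\infty,\nu)$ and the right-hand factor is independent of $K$, the portion of the $\iota_K$ symmetry supported near $\nu$ can be packaged once and for all into the desired universal type-D morphism $f$.

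I would begin by fixing a doubly-pointed bordered Heegaard diagram for $(T_\infty,\nu)$ together with a compatible bordered diagram for $S^3\backslash K$, so that gluing along the parametrized torus recovers a doubly-pointed diagram for $(S^3,K)$. The involution $\iota_K$ is induced by (i) reversing the orientation of the Heegaard surface, (ii) swapping the $\alpha$- and $\beta$-curves, (iii) swapping the basepoints $z$ and $w$, and (iv) performing a half-twist along $K$. Steps (i)--(ii) restrict to orientation reversal on each bordered piece, which on the complement side is exactly the symmetry encoded by $\iota_{S^3\backslash K}$. Steps (iii)--(iv) are supported in a neighbourhood of $\nu$ and act entirely on the solid torus side, so their effect is determined by $(T_\infty,\nu)$ alone.

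To construct $f$, I would mimic the definition of $\iota_K$ directly on $(T_\infty,\nu)$. Orientation reversal turns the type-D module $\widehat{CFD}(T_\infty,\nu)$ into a type-A module for the mirror diagram; reinterpreting this through the $\widehat{CFDA}(\mathbf{AZ})$ bimodule, which by \cite{hendricks2019involutivebordered} is precisely the bordered algebraic incarnation of orientation reversal, converts it back to a type-D module at the expense of tensoring on the left by $\widehat{CFDA}(\mathbf{AZ})$. Combining this with the basepoint swap and the half-twist along $\nu$ via the bordered quasi-stabilization machinery produces the required type-D morphism $f\colon \widehat{CFD}(T_\infty,\nu)\to \widehat{CFDA}(\mathbf{AZ})\boxtimes \widehat{CFD}(T_\infty,\nu)$. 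Universality is automatic, since this construction only uses the geometry of $(T_\infty,\nu)$.

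For the verification, tensoring on the A-side with $\iota_{S^3\backslash K}^{-1}$ and on the D-side with $f$ yields, via the pairing theorem and the local analysis above, a chain-level endomorphism of $\widehat{CFK}(S^3,K)$ that realizes exactly the Heegaard-level symmetry defining $\iota_K$; the two intermediate AZ-bimodule factors cancel by the standard bordered duality. The main obstacle is the algebraic implementation of the half-twist, which is geometrically a local isotopy but which must be realized combinatorially through bordered quasi-stabilization; this is also the source of the $\iota_K$ versus $\iota_K^{-1}$ ambiguity, because the handedness of the half-twist is not canonically fixed by the bordered data. A secondary subtlety is independence from the representative $\iota_{S^3\backslash K}\in \mathbf{Inv}_D(S^3\backslash K)$, which I would establish by noting that any two representatives differ by a homotopy whose contribution is absorbed into the AZ-cancellation step.
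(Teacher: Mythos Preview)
Your outline has a genuine gap at the construction of $f$. The natural bordered implementation of orientation reversal plus half-twist on $(T_\infty,\nu)$ does not produce a morphism $\widehat{CFD}(T_\infty,\nu)\to\widehat{CFDA}(\mathbf{AZ})\boxtimes\widehat{CFD}(T_\infty,\nu)$: as the paper notes explicitly, these two type-D modules are \emph{not} homotopy equivalent (the target has five generators with nontrivial differential, the source has one), so no symmetry of the bordered Heegaard diagram can give you $f$ as a homotopy equivalence, and the Heegaard-move argument you sketch naturally produces a map in the opposite direction (compare \Cref{lemma:Xduality}). The paper therefore does not construct $f$ by mimicking $\iota_K$; it writes $f$ down by hand as $x\mapsto a$ for a specific cycle $a$ in the five-generator model, and the entire content of the theorem is proving that this particular choice works.

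The verification is also much less direct than your proposal suggests. The paper passes to an auxiliary diagram $\Y_\infty$ obtained by adding free basepoints, builds a ``saddle'' morphism $G_\infty$ via bordered quasi-stabilization, and computes the full morphism space $H_*\bigl(\mathbf{Mor}(\widehat{CFD}(\Y_\infty),\widehat{CFD}(T_\infty,\nu))\bigr)\cong\widehat{HFL}(S^2\times S^1,L_2)$ as a four-dimensional space with an explicit basis $G_\infty,\,G_\infty\Phi^D_\Y,\,G_\infty\Psi^D_\Y,\,G_\infty(1+\Phi^D_\Y\Psi^D_\Y)$ (requiring a figure-eight computation to check linear independence). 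A grading argument plus an explicit unknot calculation then pins $f\circ G_\infty\circ\iota_\Y$ down to one of two basis elements, which after cancelling $G_\infty$ yields $\iota_K$ or $\iota_K\circ(1+\Phi_K\Psi_K)=\iota_K^{-1}$. This is the real source of the $\iota_K$ versus $\iota_K^{-1}$ ambiguity, not the handedness of the half-twist. Finally, your argument for independence of the choice in $\mathbf{Inv}_D(S^3\backslash K)$ is incorrect: distinct elements of $\mathbf{Inv}_D$ need not be homotopic, and the paper handles this instead through the free-basepoint machinery of \Cref{lem:freestabinv}.
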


\Cref{thm:mainthm4} can also be used to explicitly compute $\iota_{S^3 \backslash K}$ for some nontrivial knots $K$. The case when $K$ is the figure-eight knot is computed in \Cref{exmp:figeight}. Note that $\widehat{CFD}(S^3 \backslash K)$ is not rigid, i.e. it has more than one homotopy classes of homotopy autoequivalences; \Cref{exmp:figeight} gives the first example of explicitly computing bordered involutive Floer homology for homotopically non-rigid bordered 3-manifolds.

Furthermore, together with the proof of \Cref{thm:mainthm2}, \Cref{thm:mainthm4} can also be considered as an involutive satellite formula. In particular, given a pattern $P\subset S^1 \times D^2$, if $\widehat{CFDA}((S^1 \times D^2)\backslash P)$ is homotopy-rigid and one already knows the action of $\iota_{S^3 \backslash K}$, then one can explicitly compute the hat-flavored involutive knot Floer homology of the satellite knot $P(K)$. 

\begin{rem}
When $P$ is the $(p,1)$-cabling pattern for some $p>0$, the bimodule $\widehat{CFDA}((S^1 \times D^2)\backslash P)$, with respect to some boundary framings, can be computed from the type DAA trimodule of $S^3 \times (\text{pair-of-pants})$, which was explicitly computed in \cite[Table 1]{hanselman2015calculus}, by taking a box tensor product on its $\rho$-boundary with the type D module of the $\frac{1}{p}$-framed solid torus. It is easy to observe, via manual computation, that the resulting bimodule is homotopy-rigid. Hence \Cref{thm:mainthm4} gives a hat-flavored involutive $(p,1)$-cabling formula, which computes the involutive action of the cable knot $K_{p,1}$ from $\iota_{S^3 \backslash K}$.
\end{rem}

\subsection*{Organization} This article is organized as follows. In \Cref{sec:background}, we recall some results regarding involutive Heegaard Floer homology and bordered Floer homology. In \Cref{sec:freebasepoint}, we develop a theory of involutive knot Floer homology with a free basepoint and discuss its relationship with involutive bordered Floer homology of 0-framed knot complements. In \Cref{sec:fromsurgerytohfk}, we prove \Cref{thm:mainthm1} and use it to prove \Cref{thm:mainthm2}. Finally, in \Cref{sec:formula}, we prove \Cref{thm:mainthm4} and discuss its explicit applications.

\subsection*{Acknowledgements} The author would like to thank Kristen Hendricks, Robert Lipshitz, and JungHwan Park for helpful conversations, and Abhishek Mallick, Monica Jinwoo Kang, and Ian Zemke for numerous helpful comments. This work was supported by Institute for Basic Science (IBS-R003-D1).

\section{Involutive Heegaard Floer homology for knots and 3-manifolds}
\label{sec:background}
We assume that the reader is familiar with Heegaard Floer theory \cite{ozsvath2003absolutely,ozsvath2004holomorphic,ozsvath2006holomorphic,ozsvath2004holomorphicproperties} of knots and 3-manifolds, as well as bordered Heegaard Floer theory \cite{lipshitz2018bordered}. Throughout the paper, we will only work with $\mathbb{F}_2$ coefficients. Furthermore, we will often consider 3-manifolds $M$ endowed with torsion $\mathbf{Spin}^c$ structures. In such cases, the Heegaard Floer chain complexes $CF^- (M,\mathfrak{s})$ and $\widehat{CF}(M,\mathfrak{s})$ are chain complexes of free modules over $\mathbb{F}_2[U]$ and $\mathbb{F}_2$, respectively, and absolutely $\mathbb{Q}$-graded. 

\subsection{Involutive Heegaard Floer homology and $\iota$-complexes}
Recall that the definition of Heegaard Floer homology of any flavor starts with choosing an admissible pointed Heegaard diagram $H=(\Sigma,\boldsymbol\alpha,\boldsymbol\beta,z)$ representing $M$. The theory of involutive Heegaard Floer homology, as defined first in \cite{hendricks2017involutive}, starts by considering the conjugate diagram $\bar{H}=(-\Sigma,\boldsymbol\beta,\boldsymbol\alpha,z)$. Then we have a canonical identification map 
\[
\mathbf{conj}:CF^-(H)\xrightarrow{\simeq} CF^- (\bar{H}).
\]
Since $\bar{H}$ also represents $M$, it is related to $H$ by a sequence of Heegaard moves. Such a sequence induces a homotopy equivalence 
\[
\Phi_{\bar{H},H}:CF^-(\bar{H})\rightarrow CF^-(H).
\]
By the naturality of Heegaard Floer theory \cite{juhasz2012naturality}, the homotopy class of $\Phi_{\bar{H},H}$ does not depend on our choice of a sequence of Heegard moves from $\bar{H}$ to $H$. Thus the homotopy autoequivalence
\[
\iota_M=\Phi_{\bar{H},H}\circ \mathbf{conj}:CF^-(M)\rightarrow CF^-(M)
\]
is well-defined up to homotopy, and the image of its restriction $\iota_{\mathfrak{s}}$ to $CF^-(M,\mathfrak{s})$ is $CF^-(M,\bar{\mathfrak{s}})$. In particular, when $\mathfrak{s}$ is self-conjugate, i.e. spin, then $\iota_{M,\mathfrak{s}}$ is a homotopy autoequivalence of $CF^-(M,\mathfrak{s})$.

The involution $\iota_M$ satisfies the following properties.
\begin{itemize}
    \item $\iota_M ^2 \sim \mathbf{id}$.
    \item The localized map $U^{-1}\iota_M$ is homotopic to identity.
\end{itemize}
Inspired by the above properties, the notion of $\iota$-complex was defined in \cite{hendricks2018connected} as follows. An $\iota$-complex is a pair $(C,\iota)$ which satisfies the following properties.
\begin{itemize}
    \item $C$ is a chain complex of finitely generated free modules over $\mathbb{F}_2[U]$, such that the localized complex $U^{-1}C$ has homology $\mathbb{F}_2[U^{\pm 1}]$.
    \item $\iota$ is a homotopy autoequivalence of $C$ such that $\iota^2 \sim \mathbf{id}$.
\end{itemize}
Furthermore, given two chain complexes $C_1$ and $C_2$ of modules over $\mathbb{F}_2[U]$, a chain map $f:C_1\rightarrow C_2$ is said to be a \emph{local map} if the localized map $U^{-1}f:U^{-1}C_1\rightarrow U^{-1}C_2$ induces an injective map in homology. Given two $\iota$-complexes $(C_1,\iota_1)$ and $(C_2,\iota_2)$, a local map $f:C_1\rightarrow C_2$ is said to be a \emph{$\iota$-local map} if $\iota_2 \circ f \sim f \circ \iota_1$. If $\iota$-local maps between $(C_1,\iota_1)$ and $(C_2,\iota_2)$ exist in both directions, we say that the given two $\iota$-complexes are \emph{$\iota$-locally equivalent}. The set of $\iota$-local equivalence classes of $\iota$-complexes forms a group $\mathfrak{I}$ under the tensor product operation, which is called the \emph{local equivalence group}.

The notion of $\iota$-complexes and local equivalences between them can be weakened, as shown in \cite{dai2018infinite}, in the following way. An almost $\iota$-complex is a pair $(C,\iota)$ which satisfies the following properties.
\begin{itemize}
    \item $C$ is a chain complex of finitely generated free modules over $\mathbb{F}_2[U]$, such that the localized complex $U^{-1}C$ has homology $\mathbb{F}_2[U^{\pm 1}]$.
    \item $\iota:C\rightarrow C$ is a chain map of chain complexes of $\mathbb{F}_2$-vector spaces, such that $\mathbf{Im}(\partial \iota + \iota \partial) \subset \mathbf{Im}(U)$ and $\iota^2 \sim \mathbf{id} \mod U$.
\end{itemize}
Given almost local $\iota$-complexes $(C_1,\iota_1)$ and $(C_2,\iota_2)$, a local map $f:C_1\rightarrow C_2$ is an \emph{almost local map} if $\iota_2 \circ f + f \circ \iota_1 \sim 0 \mod U$. If almost local maps exist in both directions, we say that the given two almost $\iota$-complexes are \emph{almost locally equivalent}. Again, the set of almost local equivalences of almost $\iota$-complexes form a group $\hat{\mathfrak{I}}$, which is called the \emph{almost local equivalence group}. The construction of involutive Heegaard Floer homology gives a canonical map
\[
\Phi^3 _{\mathbb{Z}_2} \xrightarrow{f} \mathfrak{I} \xrightarrow{g} \hat{\mathfrak{I}},
\]
where $\Phi^3 _{\mathbb{Z}_2}$ denotes the homology cobordism group of $\mathbb{Z}_2$-homology spheres, $f$ maps a homology cobordism class $[Y]$ of a $\mathbb{Z}_2$-homology sphere $Y$ to its involutive Heegaard Floer homology $[CF^- (Y,[0]),\iota_{Y,[0]}]$ for the unique spin structure $[0]$ on $Y$, and $g$ is the canonical forgetful map.

\begin{rem}
\label{rem:generalizedlocal}
The definition of $\iota$-local maps, local equivalences, and their ``almost'' versions also work when we drop the condition that $U^{-1}C$ is homotopy equivalent to $\mathbb{F}_2[U^{\pm 1}]$. We will sometimes use this generalized notion throughout this paper.
\end{rem}

\subsection{Involutive knot Floer homology and $\iota_K$-complexes}
The involutive theory for knot Floer homology is a bit more complicated than the 3-manifold case. For simplicity, we only consider knots $K$ in $S^3$. Consider a doubly pointed Heegaard diagram $H=(\Sigma,\boldsymbol\alpha,\boldsymbol\beta,z,w)$ representing $(S^3,K)$. By counting holomorphic disks while recording their algebraic intersection numbers with $z$ and $w$ by formal variables $U$ and $V$, respectively, one gets an absolutely $\mathbb{Z}$-bigraded (called \emph{Alexander} and \emph{Maslov} grading, respectively) chain complex $CFK_{UV}(S^3,K)$ of finitely generated free modules over the ring $\mathbb{F}_2[U,V]$.

Consider the conjugate diagram $\bar{H}=(-\Sigma,\boldsymbol\beta,\boldsymbol\alpha,w,z)$ of $H$; note that, in addition to flipping the orientation of $\Sigma$ and exchanging $\alpha$ and $\beta$ curves, we are also exchanging the basepoints $z$ and $w$. Then, as in the 3-manifold case, we have a canonical conjugation map 
\[
\mathbf{conj}:CFK_{UV}(H)\rightarrow CFK_{UV}(\bar{H}),
\]
which is a chain skew-isomorphism, i.e. intertwines the actions of $U$ and $V$ on its domain with the actions of $V$ and $U$ on its codomain. Then we consider a self-diffeomorphism of $S^3$ that acts on a tubular neighborhood of $K$ by a ``half-twist'', so that it fixes $K$ setwise and maps $z$ and $w$ to $w$ and $z$, respectively. It induces a chain isomorphism 
\[
\phi_{\ast}:CFK_{UV}(\bar{H})\rightarrow CFK_{UV}(\phi(\bar{H})).
\]
Now, the diagrams $\phi(\bar{H})$ and $H$ both represent the knot $K$ together with two prescribed basepoints $z$ and $w$ on $K$, so they are related by a sequence of Heegaard moves. Such a sequence induces a homotopy equivalence 
\[
\Phi_{\phi(\bar{H}),H}:CFK_{UV}(\phi(\bar{H}))\rightarrow CFK_{UV}(H),
\]
whose homotopy class is independent of our choice of a sequence of Heegaard moves from $\phi(\bar{H})$ to $H$, due to naturality. Thus we have a homotopy skew-equivalence 
\[
\iota_K =\Phi_{\phi(\bar{H}),H}\circ \phi_{\ast}\circ \mathbf{conj}:CFK_{UV}(S^3,K)\rightarrow CFK_{UV}(S^3,K),
\]
which is well-defined up to homotopy. Note that such a construction can also be applied for links as well; given a link $L$, where each component $K\subset L$ has one $z$-basepoint and one $w$-basepoint (which correspond to formal variables $U_K$ and $V_K$), following the above construction gives a homotopy skew-equivalence $\iota_L$ which intertwines the actions of $U_K$ and $V_K$ for each component $K$.

The homotopy skew-equivalence $\iota_K$ satisfies the following properties, as shown in \cite{zemke2019connected}.
\begin{itemize}
    \item $\iota_K ^2 \sim 1+\Phi\Psi \sim 1+\Psi\Phi$, where $\Phi$ and $\Psi$ are the formal derivatives of the differential $\partial$ of $CFK_{UV}(S^3,K)$ with respect to the formal variables $U$ and $V$, respectively.
    \item The localized map $(U,V)^{-1}$ is homotopic to identity.
\end{itemize}
Using the above properties, the notion of $\iota_K$-complexes was defined in \cite{zemke2019connected} as follows. An $\iota_K$-complex is a pair $(C,\iota_K)$ which satisfies the following properties.
\begin{itemize}
    \item $C$ is a chain complex of finitely generated free modules over $\mathbb{F}_2[U,V]$, such that $(U,V)^{-1}C$ has homology $(U,V)^{-1}\mathbb{F}_2 [U,V]$.
    \item $\iota_K$ is a homotopy skew-autoequivalence of $C$ such that $\iota_K ^2 \sim 1+\Phi\Psi$, where $\Phi$ and $\Psi$ are the formal derivatives of the differential $\partial$ of $C$ with respect to the formal variables $U$ and $V$, respectively.
\end{itemize}
Given two chain complexes $C_1$ and $C_2$ of free modules over $\mathbb{F}_2[U,V]$, a chain map $f:C_1\rightarrow C_2$ is said to be a \emph{local map} if the maps
\[
\begin{split}
    U^{-1}f\vert_{V=0} &: C\otimes \mathbb{F}_2[U^{\pm 1}] \rightarrow D\otimes \mathbb{F}[U^{\pm 1}], \\
    V^{-1}f\vert_{U=0} &: C\otimes \mathbb{F}_2[V^{\pm 1}] \rightarrow D\otimes \mathbb{F}[V^{\pm 1}]
\end{split}
\]
induce injective maps in homology. Given two $\iota_K$-complexes $(C_1,\iota_1)$ and $(C_2,\iota_2)$, a local map $f:C_1 \rightarrow C_2$ is said to be a \emph{$\iota_K$-local map} if $\iota_2 \circ f \sim f \circ \iota_1$. If $\iota_K$-local maps between two $\iota_K$-complexes exist in both directions, then we say that they are \emph{$\iota_K$-locally equivalent}. The set of $\iota_K$-local equivalence classes of $\iota_K$-complexes form a group $\mathfrak{I}_K$ when endowed with the addition operation
\[
\iota_{C_1\otimes C_2} = (\mathbf{id}+\Phi\otimes \Psi)\circ (\iota_1 \otimes \iota_2).
\]
As in the 3-manifold case, the construction of involutive knot Floer homology gives a canonical map $\mathcal{C}\rightarrow \mathfrak{I}_K$.

We will sometimes work with knot Floer homology with coefficient ring $\mathbb{F}_2 [U,V]/(UV)$, which is denoted as $\mathcal{R}$, rather than the full two-variable ring $\mathbb{F}_2[U,V]$. Note that although $\iota_K$-local maps and $\iota_K$-local equivalences are well-defined, it is unclear whether $\iota_K$-local equivalence classes of involutive $\mathcal{R}$-coefficient knot Floer chain complexes form a well-defined group, since the basepoint actions might not be uniquely determined from the $\mathcal{R}$-coefficient differential.

\subsection{Involutive bordered Floer homology}
Let $M$ be a bordered 3-manifold with one boundary; for simplicity, we will assume that $\partial M$ is a torus. Choose a bordered Heegaard diagram $H=(\Sigma,\boldsymbol\alpha,\boldsymbol\beta,z)$ representing $M$ and consider its conjugate diagram $\bar{H}=(-\Sigma,\boldsymbol\beta,\boldsymbol\alpha,z)$. Then we have canonical identification maps
\[
\begin{split}
    \mathbf{conj}:\widehat{CFD}(H)\rightarrow \widehat{CFD}(\bar{H}),\\
    \mathbf{conj}:\widehat{CFA}(H)\rightarrow \widehat{CFA}(\bar{H}).
\end{split}
\]
between the type-D and type-A modules associated to $H$ and $\bar{H}$, respectively. Note that we are using the same name for the type-D and type-A identification maps for convenience.

In contrast to the case of closed 3-manifolds, there does not exist a sequence of Heegaard moves from $\bar{H}$ to $H$. The reason is that $H$ is $\alpha$-bordered, whereas $\bar{H}$ is $\beta$-bordered. To remedy this problem, Hendricks and Lipshitz \cite{hendricks2019involutivebordered} uses the Auroux-Zarev piece $\mathbf{AZ}$ and its conjugate $\overline{\mathbf{AZ}}$, which satisfies the property that $\overline{\mathbf{AZ}}\cup \mathbf{AZ}$ represents a trivial cylinder $T^2 \times I$. A Heegaard diagram representing $\mathbf{AZ}$ is shown in \Cref{fig:AZpiece}.

\begin{figure}[hbt]
    \centering
    \includegraphics[width=0.3\textwidth]{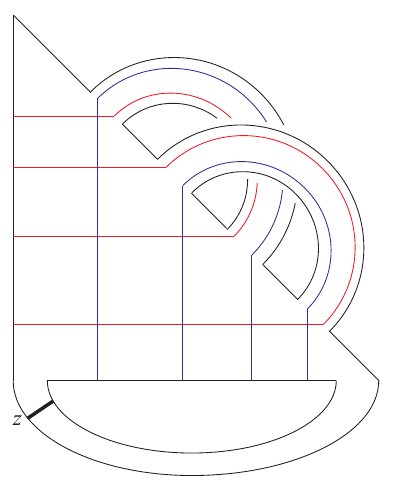}
    \caption{A $\alpha$-$\beta$-bordered Heegaard diagram representing the Auroux-Zarev piece $\mathbf{AZ}$. Note that this diagram is nice, in the sense of \cite[Definition 8.1]{lipshitz2018bordered}.}
\label{fig:AZpiece}
\end{figure}

    One starts with the \cite[Theorem 4.6]{lipshitz2011heegaard}, which implies that $\mathbf{AZ} \cup \bar{H}$ and $\bar{H}\cup \overline{\mathbf{AZ}}$ are related to $H$ by a sequence of Heegaard moves. Choosing such sequences give homotopy equivalences
\[
\begin{split}
    \Phi_{\mathbf{AZ}\cup \bar{H},H} &:\widehat{CFD}(\mathbf{AZ}\cup \bar{H})\rightarrow \widehat{CFD}(H),\\
    \Phi_{\bar{H}\cup \overline{\mathbf{AZ}},H} &: \widehat{CFA}(\bar{H} \cup \overline{\mathbf{AZ}}) \rightarrow \widehat{CFA}(H).
\end{split}
\]
Recall that we have pairing maps induced by time dilation, as discussed in \cite[Chapter 9]{lipshitz2018bordered}, which are defined uniquely up to homotopy:
\[
\begin{split}
    \widehat{CFD}(\mathbf{AZ}\cup \bar{H}) &\xrightarrow{\simeq} \widehat{CFDA}(\mathbf{AZ})\boxtimes \widehat{CFD}(\bar{H}), \\
    \widehat{CFA}(\bar{H}\cup \overline{\mathbf{AZ}}) &\xrightarrow{\simeq} \widehat{CFA}(\bar{H})\boxtimes \widehat{CFDA}(\overline{\mathbf{AZ}}).
\end{split}
\]
Then we can define the bordered involution $\iota_M$, in both type-D and type-A modules, as follows:
\[
\begin{split}
    \iota_M &: \widehat{CFDA}(\mathbf{AZ})\boxtimes \widehat{CFD}(H)\xrightarrow{\mathbf{id}\boxtimes \mathbf{conj}} \widehat{CFDA}(\mathbf{AZ})\boxtimes \widehat{CFD}(\bar{H})\xrightarrow{\simeq} \widehat{CFD}(\mathbf{AZ}\cup \bar{H})\xrightarrow{\Phi_{\mathbf{AZ}\cup \bar{H},H}} \widehat{CFD}(H), \\
    \iota_M &: \widehat{CFA}(H)\boxtimes \widehat{CFDA}(\overline{\mathbf{AZ}}) \xrightarrow{\mathbf{conj}\boxtimes \mathbf{id}} \widehat{CFD}(\bar{H})\boxtimes \widehat{CFDA}(\overline{\mathbf{AZ}})\xrightarrow{\simeq} \widehat{CFA}(\bar{H}\cup\overline{\mathbf{AZ}})\xrightarrow{\Phi_{\bar{H}\cup \overline{\mathbf{AZ}},H}} \widehat{CFA}(H).
\end{split}
\]

Now suppose that we are given a bordered 3-manifold $N$ whose boundary consists of two torus components. Choose an $\alpha$-$\alpha$-bordered Heegaard diagram $H$ representing $N$. Then it follows again from \cite[Theorem 4.6]{lipshitz2011heegaard} that $\mathbf{AZ}\cup \bar{H}\cup \overline{\mathbf{AZ}}$ is related to $H$ by a sequence of Heegaard moves. Choosing such a sequence gives a homotopy equivalence 
\[
\Phi_{\mathbf{AZ}\cup \bar{H}\cup \overline{\mathbf{AZ}},H}: \widehat{CFDA}(\mathbf{AZ} \cup \bar{H} \cup \overline{\mathbf{AZ}}) \rightarrow \widehat{CFDA}(H).
\]
Thus we can define a bordered involution $\iota_N$ as follows.
\[
\begin{split}
    \iota_N:\widehat{CFDA}(\mathbf{AZ})\boxtimes \widehat{CFDA}(H)\boxtimes \widehat{CFDA}(\overline{\mathbf{AZ}}) &\xrightarrow{\mathbf{id}\boxtimes \mathbf{conj}\boxtimes \mathbf{id}} \widehat{CFDA}(\mathbf{AZ})\boxtimes \widehat{CFDA}(\bar{H})\boxtimes \widehat{CFDA}(\overline{\mathbf{AZ}}) \\
    &\xrightarrow{\simeq} \widehat{CFDA}(\mathbf{AZ} \cup \bar{H} \cup \overline{\mathbf{AZ}}) \\
    &\xrightarrow{\Phi_{\mathbf{AZ}\cup \bar{H}\cup \overline{\mathbf{AZ}},H}} \widehat{CFDA}(H).
\end{split}
\]
Unlike the cases of knots and closed 3-manifolds, we do not know whether the homotopy classes of $\iota_M$ and $\iota_N$ are independent of our choices of sequences of Heegaard moves. This is because a naturality result for bordered Heegaard Floer homology is currently unknown. However, we can instead consider the sets of all possible involutions coming from any possible choices of sequences of Heegaard moves, as shown in the definition below.

\begin{defn}
Given a bordered 3-manifold $M$ with one torus boundary, we denote the set of all possible involutions 
\[
\begin{split}
\iota_M &: \widehat{CFDA}(\mathbf{AZ})\boxtimes \widehat{CFD}(M)\rightarrow\widehat{CFD}(M), \\
\iota_M &: \widehat{CFA}(M) \boxtimes \widehat{CFDA}(\overline{\mathbf{AZ}}) \rightarrow \widehat{CFD}(M),
\end{split}
\]
induced by choosing a sequence of Heegaard moves from $\mathbf{AZ}\cup \bar{H}$ and $\bar{H}\cup \overline{\mathbf{AZ}}$ to $H$ as $\mathbf{Inv}_D(M)$ and $\mathbf{Inv}_A(M)$, respectively. Furthermore, given a bordered 3-manifold $N$ with two torus boundaries, we similarly denote the set of all possible involutions $$\iota_M:\widehat{CFDA}(\mathbf{AZ})\boxtimes \widehat{CFDA}(N)\boxtimes \widehat{CFDA}(\overline{\mathbf{AZ}})\rightarrow\widehat{CFDA}(N)$$ induced by choosing a sequence of Heegaard moves from $\mathbf{AZ} \cup \bar{H}\cup \overline{\mathbf{AZ}}$ to $H$ as $\mathbf{Inv}(N)$.
\end{defn}

Recall that, given two bordered 3-manifolds $M_1$ and $M_2$, we have a pairing theorem 
\begin{equation}
    \widehat{CF}(M_1 \cup M_2) \simeq \widehat{CFA}(M_1) \boxtimes \widehat{CFD}(M_2).
    \label{eqn:boxpairing}
\end{equation}
Due to the pairing theorem for triangles \cite[Proposition 5.35]{lipshitz2016bordered}, it is clear that the homotopy equivalence used in \Cref{eqn:boxpairing} is well-defined up to homotopy. \cite[Theorem 5.1]{hendricks2019involutivebordered} tells us that for any $\iota_1 \in \mathbf{Inv}_D(M_1)$ and $\iota_2 \in \mathbf{Inv}_D (M_2)$, the map 
\[
\begin{split}
\widehat{CF}(M_1 \cup M_2) &\xrightarrow{\text{pairing}} \widehat{CFA}(M_1) \boxtimes \widehat{CFD}(M_2) \\
&\xrightarrow{\simeq} \widehat{CFA}(M_1)\boxtimes \widehat{CFDA}(\overline{\mathbf{AZ}})\boxtimes \widehat{CFDA}(\mathbf{AZ})\boxtimes \widehat{CFD}(M_2) \\
&\xrightarrow{\iota_1 \boxtimes \iota_2} \widehat{CFA}(M_1) \boxtimes \widehat{CFD}(M_2) \xrightarrow{\text{pairing}} \widehat{CF}(M_1 \cup M_2)
\end{split}
\]
is homotopic to the involution $\iota_{M_1 \cup M_2}$ on $\widehat{CF}(M_1 \cup M_2)$.

One also has another pairing formula involving morphism spaces between type-D modules. Given two bordered 3-manifolds $M_1$ and $M_2$ with one torus boundary, one can also obtain the hat-flavored Heegaard Floer homology of $-M_1 \cup M_2$ as follows\cite[Theorem 1]{lipshitz2011heegaard}:
\begin{equation}
\widehat{CF}(-M_1 \cup M_2) \simeq \mathbf{Mor}(\widehat{CFD}(M_1),\widehat{CFD}(M_2)),
\label{eqn:morpairing}
\end{equation}
Unlike the box tensor product version of pairing formula, the well-definedness of homotopy equivalence up to homotopy in the above formula is not entirely obvious. This is because its proof relies on the following isomorphism:
\[
\begin{split}
\widehat{CF}(-M_1 \cup M_2 ) &\xrightarrow{\text{pairing}} \widehat{CFA}(-M_1) \boxtimes \widehat{CFD}(M_2)\\
&\xrightarrow{M_1 \simeq M_1 \cup \mathbf{AZ}} \overline{\widehat{CFD}(-M_1)} \boxtimes \widehat{CFAA}(\mathbf{AZ}) \boxtimes \widehat{CFD}(M_2) \\
&\xrightarrow{\text{canonical isomorphism}} \mathbf{Mor}(\widehat{CFD}(M_1),\widehat{CFD}(M_2)).
\end{split}
\]
In particular, the homotopy equivalence $\widehat{CFA}(M_1)\simeq \widehat{CFD}(M_1)\boxtimes \widehat{CFAA}(\mathbf{AZ})$, which is induced by a sequence of Heegaard moves from $M_1$ to $M_1 \cup \mathbf{AZ}$, may not be well-defined due to the lack of naturality. However, if we have two such sequences which induce two identification maps
\[
F,G:\widehat{CF}(-M_1 \cup M_2) \simeq \mathbf{Mor}(\widehat{CFD}(M_1),\widehat{CFD}(M_2)),
\]
then by the pairing theorem for triangles, the map $G^{-1}\circ F:\widehat{CF}(-M_1\cup M_2)\rightarrow \widehat{CF}(-M_1 \cup M_2)$ is the homotopy autoequivalence induced by a loop of Heegaard moves, which should be homotopic to identity due to naturality. Therefore the homotopy equivalence used in \Cref{eqn:morpairing} is well-defined up to homotopy.

Now it follows from the proof of \cite[Theorem 8.5]{hendricks2019involutivebordered} that the map 
\[
\begin{split}
\widehat{CF}(-M_1 \cup M_2) &\xrightarrow{\simeq} \mathbf{Mor}(\widehat{CFD}(M_1),\widehat{CFD}(M_2)) \\
&\xrightarrow{f\mapsto \mathbf{id}\boxtimes f} \mathbf{Mor}(\widehat{CFDA}(\mathbf{AZ})\boxtimes \widehat{CFD}(M_1),\widehat{CFDA}(\mathbf{AZ})\boxtimes \widehat{CFD}(M_2)) \\
&\xrightarrow {g\mapsto \iota_2 \circ g \circ \iota_1 ^{-1}} \mathbf{Mor}(\widehat{CFD}(M_1),\widehat{CFD}(M_2)) \\
&\xrightarrow{\simeq} \widehat{CF}(-M_1 \cup M_2)
\end{split}
\]
is homotopic to the involution $\iota_{-M_1 \cup M_2}$ on $\widehat{CF}(-M_1 \cup M_2)$ for any choice of $\iota_1 \in \mathbf{Inv}_D(M_1)$ and $\iota_2 \in \mathbf{Inv}_D(M_2)$.

\section{Involutive knot Floer homology with a free basepoint}
\label{sec:freebasepoint}

Given a knot $K$, instead of choosing a doubly-pointed Heegaard diagram representing $K$, we consider a multipointed Heegaard diagram $H=(\Sigma,\boldsymbol\alpha,\boldsymbol\beta,\{z,z_{free}\},w)$, where $z$ and $w$ are points on $K$ and $z_{free}$ is a \emph{free basepoint}, which lies outside $K$. Given such a diagram, we define its 2-variable knot Floer homology 
\[
CFK_{UV}(S^3,K,z_{free})=CF_{UV}(H) \coloneqq H_{\ast} \left( \bigoplus_{\mathbf{x}\in \mathbb{T}_{\boldsymbol\alpha}\cap \mathbb{T}_{\boldsymbol\beta}} \mathbb{F}_2[U,V]\mathbf{x},\partial \right),
\]
where the differential $\partial$ is defined using the formula 
\[
\partial \mathbf{x}=\sum_{\mathbf{y}\in \mathbb{T}_{\boldsymbol\alpha}\cap \mathbb{T}_{\boldsymbol\beta}} \sum_{\substack{\phi\in \pi_2(\mathbf{x},\mathbf{y}) \\ n_{z_{free}}(\phi)=0 \\ \mu(\phi)=1}} \sharp \widetilde{\mathcal{M}(\phi)} \cdot U^{n_{z}(\phi)} V^{n_{w}(\phi)} \mathbf{y}.
\]
Here, $\mathcal{M}(\phi)$ denotes the moduli space of holomorphic curves representing the given homotopy class $\phi$ of Whitney disks from $\mathbf{x}$ to $\mathbf{y}$, and $n_{z_{free}}(\phi)$, $n_z(\phi)$, and $n_w(\phi)$ denote the algebraic intersection number of $\phi$ with the codimension 2 submanifolds given by $z_{free}$, $z$, and $w$, respectively. Note that the naturality result for Heegaard Floer homology \cite{juhasz2012naturality} also applies to this case, so that chain homotopy autoequivalences of $CFK_{UV}(S^3,K,z_{free})$ induced by any loop of Heegaard moves connecting Heegaard diagrams representing $(S^3,K,z_{free})$ are homotopic to the identity map.

As in involutive knot Floer homology, we can define the conjugate diagram $\bar{H}$ of $H$ as follows:
\[
\bar{H}=(-\Sigma,\boldsymbol\beta,\boldsymbol\alpha,\{w,z_{free}\},z).
\]
We have a canonically defined chain skew-isomorphism:
\[
\mathbf{conj}: CF_{UV}(H)\rightarrow CF_{UV}(\bar{H}).
\]
We then consider the half-twist self-diffeomorphism $\phi$ of $(S^3,K,z_{free})$ which maps $z$ and $w$ to $w$ and $z$, respectively. It induces a diffeomorphism map 
\[
\phi_{\ast}:CF_{UV}(\bar{H})\rightarrow CF_{UV}(\phi(\bar{H})).
\]
Then, since $\phi(\bar{H})$ and $H$ both represent $(S^3,K,z_{free})$, there exists a sequence of Heegaard moves between them, which induces a homotopy equivalence 
\[
\Phi_{\phi(\bar{H}),H}:CF_{UV}(\phi(\bar{H}))\rightarrow CF_{UV}(H),
\]
which is well-defined up to chain homotopy, due to naturality. Composing the above three maps thus gives 
\[
\iota_{K,z_{free}}=\Phi_{\phi(\bar{H}),H}\circ \phi_{\ast} \circ \mathbf{conj},
\]
which is again well-defined up to chain homotopy.

Given a doubly-pointed Heegaard diagram $H_K$ representing $K$, we can perform a free-stabilization on $H_K$ near the basepoint $z$, as shown in the left of \Cref{fig:freestab}, to get a new diagram $H^{st}_K$ representing $(S^3,K,z_{free})$. Then, by \cite[Lemma 7.1]{zemke2019link}, the differential of $CF_{UV}(H^{st}_K)$ is given by the matrix 
\[
\partial_{CF_{UV}(H^{st}_K)} = \begin{pmatrix}
\partial_{CF_{UV}(H_K)} & UV \\ 0 & \partial_{CF_{UV}(H_K)} 
\end{pmatrix},
\]
where we are using an identification 
\[
CF_{UV}(H^{st}_K)\simeq CF_{UV}(H_K)\otimes \left( \mathbb{F}_2 \cdot \theta^+ \oplus \mathbb{F}_2 \cdot \theta^- \right)
\]
of the chain group. Furthermore, the free-stabilization map $S_{z_{free}}:CF_{UV}(H_K)\rightarrow CF_{UV}(H^{st}_K)$, defined as 
\[
S^+_{z_{free}}(\mathbf{x})=\mathbf{x}\otimes \theta^{+},
\]
depends only on the isotopy class of $K$.

\begin{figure}[hbt]
    \centering
    \includegraphics[width=0.7\textwidth]{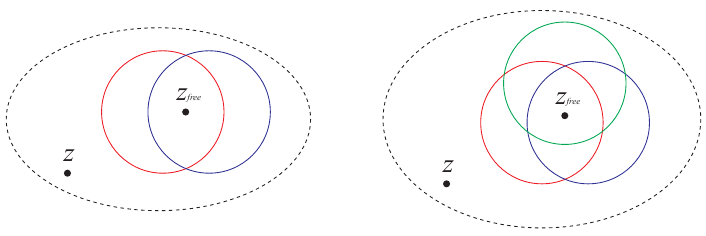}
    \caption{Left, a free-stabilization of a Heegaard diagram near a basepoint $z$. Right, a free-stabilization of a Heegaard triple-diagram near the same basepoint $z$.}
\label{fig:freestab}
\end{figure}

We now assume that $K$ is boundary-parallel to the Heegaard surface $\Sigma$ and the self-diffeomorphism $\phi$ acts as identity near the free-stabilization locus. Then $\phi(\bar{H}^{st}_K)$ is also a free-stabilization on $\phi(\bar{H}_K)$ near the basepoint $z$, and for any sequence $H_0=\phi(\bar{H}_K)\rightarrow H_1 \rightarrow \cdots \rightarrow H_n=H_K$ of Heegaard diagrams, we have a corresponding sequence $H^{st}_0=\phi(\bar{H}^{st}_K)\rightarrow\cdots\rightarrow H^{st}_n=H^{st}_K$ such that for each $i$, $H^{st}_i$ is a free-stabilization of $H_i$ near $z$. 

For each $i$, the Heegaard move $H_i\rightarrow H_{i+1}$ is either an isotopy, a handleslide, or a stabilization. Since we can always start with sufficiently stabilized diagrams and replace an isotopy by a sequence of handleslides, we may further assume that all Heegaard moves that we use are handleslides. Recall that the chain homotopy equivalences associated to handleslides are defined by counting holomorphic triangles in a Heegaard triple diagram. If the homotopy equivalence $f_i:CF_{UV}(H_i)\rightarrow CF_{UV}(H_{i+1})$ is defined by counting triangles in a triple diagram $H_T=(\Sigma,\boldsymbol\alpha,\boldsymbol\beta,\boldsymbol\gamma,z,w)$, then the homotopy equivalence $f^{st}_i:CF_{UV}(H^{st}_i)\rightarrow CF_{UV}(H^{st}_{i+1})$ is defined by counting triangles in a triple diagram $H^{st}_T$ which is obtained by free-stabilizing $H_T$ near $z$, as shown in the right of \Cref{fig:freestab}. Thus, by \cite[Theorem 6.7]{zemke2015graph}, we know that 
\[
S^+_{z_{free}} \circ f_i \sim f^{st}_i \circ S^+_{z_{free}},
\]
so we deduce that $S^+_{z_{free}}$ is well-defined up to homotopy and 
\[
S^+_{z_{free}} \circ \iota_k \sim \iota_{K,z_{free}} \circ S^+_{z_{free}}.
\]
Furthermore, since the truncated map $S^+_{z_{free}}\vert_{U=1,V=0}$ is the hat-flavored  free-stabilzation map 
\[
S^+_{z_{free}}:\widehat{CFK}(S^3,z)\rightarrow \widehat{CFK}(S^3,\{z,z_{free}\}),
\]
which is injective, and tensoring it with $\mathbb{F}_2[U^{\pm 1}]$ gives $U^{-1}S^+_{z_{free}}\vert_{V=0}$, we see that $U^{-1}S^+_{z_{free}}\vert_{V=0}$ (and also $V^{-1}S^+_{z_{free}}\vert_{U=0}$) induces an injective map in homology. Therefore $S^+_{z_{free}}$ is local.

We now interpret involutive knot Floer theory with a free basepoint in terms of bordered Floer homology. Consider the triply-pointed bordered Heegaard diagram $\X=(\Sigma,\boldsymbol\alpha,\boldsymbol\beta,\{z,z_{free}\},w)$, defined as in \Cref{fig:diagramX}. This diagram represents the longitudinal knot lying inside the $\infty$-framed solid torus, together with a prescribed free basepoint $z_{free}$ on the boundary torus.

\begin{figure}[hbt]
    \centering
    \includegraphics[width=0.4\textwidth]{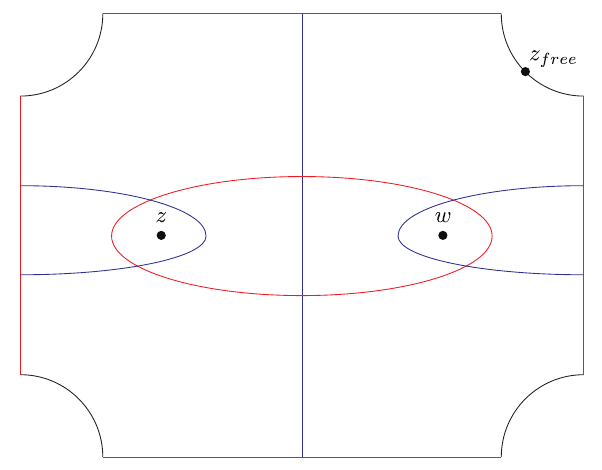}
    \caption{The triply-pointed Heegaard diagram $\X$.}
\label{fig:diagramX}
\end{figure}

Note that, for any bordered Heegaard diagram $H$ of $M \backslash K$, where $K$ is a framed knot inside a closed 3-manifold $M$ and the framing is denoted as $\nu$, the glued diagram $H\cup \X$ is a Heegaard diagram representing the core curve inside the Dehn surgery $M_\nu (K)$, together with a free basepoint.

We now consider the new diagram $\phi(\bar{\X})$, where $\bar{\X}$ denotes the conjugate diagram of $\X$, defined as 
\[
\bar{\X}=(-\Sigma,\boldsymbol\beta,\boldsymbol\alpha,\{w,z_{free}\},z),
\]
and $\phi$ denotes the ``half-twist'' self-diffeomorphism of $\Sigma$ along the longitudinal knot, so that it maps $z$ to $w$ and $w$ to $z$, respectively.

\begin{lem}
\label{lemma:Xduality}
Consider the $\alpha$-$\beta$-bordered Auroux-Zarev piece $\mathbf{AZ}$. Then $\overline{\mathbf{AZ}}\cup \phi(\bar{\X})$ and $\X$ are related by a sequence of Heegaard moves.
\end{lem}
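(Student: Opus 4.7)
The plan is to verify that $\overline{\mathbf{AZ}} \cup \phi(\bar{\X})$ is an admissible multipointed $\alpha$-bordered Heegaard diagram representing the same decorated bordered 3-manifold as $\X$---namely, the $\infty$-framed solid torus $T_\infty$ containing the longitudinal knot $\nu$ with basepoints $z, w$ on $\nu$ and a free basepoint $z_{free}$ on the boundary torus---and then to invoke a naturality result to produce a sequence of Heegaard moves between the two diagrams. This strategy is parallel to the one used to define the ordinary bordered involution in \Cref{sec:background}, but with additional bookkeeping for the free basepoint and the half-twist.

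First I would analyze the effect of conjugation and the half-twist on $\X$. Reversing the orientation of $\Sigma$ and swapping $\boldsymbol\alpha$ with $\boldsymbol\beta$ produces the $\beta$-bordered diagram $\bar{\X}$, which represents the same underlying solid torus and longitudinal knot but with the roles of $z$ and $w$ interchanged and with the induced opposite boundary parametrization (in the sense appropriate for the bordered setting, as in \cite{hendricks2019involutivebordered}). Applying the half-twist diffeomorphism $\phi$ of $(T_\infty,\nu)$ then restores the positions of the labels $z$ and $w$ on $\nu$ to agree with those in $\X$; since $z_{free}$ lies on the boundary away from $\nu$ and $\phi$ can be taken to act as the identity there, the free basepoint is unaffected. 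Thus $\phi(\bar{\X})$ is a $\beta$-bordered triply-pointed diagram representing $(T_\infty,\nu)$ with the same basepoint labeling as $\X$, only with the opposite boundary parametrization.

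Next, I would use the defining property of the Auroux-Zarev piece, namely that the glued bordered 3-manifold $\overline{\mathbf{AZ}} \cup \mathbf{AZ}$ is a trivial mapping cylinder $T^2 \times I$. Gluing $\overline{\mathbf{AZ}}$ on the $\beta$-bordered side of $\phi(\bar{\X})$ therefore does not change the underlying 3-manifold, the knot, or the basepoint configuration, but corrects the boundary parametrization back to the standard $\alpha$-bordered one. The resulting diagram $\overline{\mathbf{AZ}}\cup \phi(\bar{\X})$ is thus an $\alpha$-bordered triply-pointed Heegaard diagram representing exactly the decorated bordered 3-manifold represented by $\X$. Admissibility of the glued diagram follows from the admissibility of $\X$ and the standard properties of the Auroux-Zarev piece, possibly after an isotopy of the $\boldsymbol\beta$-curves. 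Finally, I would invoke a multipointed, free-basepoint extension of \cite[Theorem 4.6]{lipshitz2011heegaard}, which asserts that any two admissible bordered Heegaard diagrams representing the same decorated bordered 3-manifold are related by a sequence of Heegaard moves, to conclude the existence of the desired sequence from $\overline{\mathbf{AZ}}\cup \phi(\bar{\X})$ to $\X$.

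The main obstacle is not conceptual but bookkeeping: one must carefully check that the boundary parametrizations agree after gluing, that the half-twist $\phi$ can be arranged to fix $z_{free}$, and that the multipointed admissibility conditions pass through the gluing. These checks are all direct analogues of the arguments used to define the bordered involution in \cite{hendricks2019involutivebordered}, with the extra decoration by $z_{free}$ handled by the same reasoning used earlier in this section to show that free-stabilization commutes up to homotopy with $\iota_K$.
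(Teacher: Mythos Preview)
Your approach is correct in spirit but takes a genuinely different route from the paper's. You argue abstractly: verify that $\overline{\mathbf{AZ}}\cup\phi(\bar{\X})$ and $\X$ represent the same decorated bordered $3$-manifold, then invoke a multipointed/free-basepoint extension of \cite[Theorem~4.6]{lipshitz2011heegaard} to obtain a sequence of Heegaard moves. The paper instead gives an explicit, constructive proof: it observes that $\X$ is obtained from the standard bordered diagram $H$ for the $0$-framed solid torus by adding the basepoints $z,w$ and the small $\alpha$- and $\beta$-circles surrounding them, recalls the explicit handleslide/destabilization sequence from $\overline{\mathbf{AZ}}\cup\bar{H}$ to $H$ exhibited in \cite[Figures~8 and~9]{lipshitz2011heegaard}, and checks that those same moves can be performed on $\overline{\mathbf{AZ}}\cup\phi(\bar{\X})$ because the extra decoration is localized away from the moves. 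The process is drawn step by step in \Cref{fig:HeegaardmovesX}.

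Each approach has its virtues. Your argument is cleaner and more conceptual, and it makes transparent why the lemma should hold; its cost is that you must appeal to a multipointed, free-basepoint version of the bordered Reidemeister--Singer theorem that is not stated in exactly this form in \cite{lipshitz2011heegaard}, so some care (or a short extension argument) is required. The paper's argument avoids this by exhibiting the moves directly, which also has a practical payoff: since bordered naturality is not known, the map $\iota_{\X}$ defined immediately after the lemma depends on the chosen sequence of moves, and having an explicit sequence in hand (rather than one supplied by an existence theorem) makes subsequent computations, such as those in \Cref{sec:formula}, concrete.
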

\begin{proof}
Denote the bordered Heegaard diagram representing the 0-framed solid torus as $H$, and its conjugate as $\bar{H}$. It is proven in \cite[Figure 8 and 9]{lipshitz2011heegaard} that $\overline{\mathbf{AZ}}\cup \bar{H}$ and $H$ are related by a sequence of handleslides and a destabilization. Since $H$ is simply $\X$ without the basepoints $z$, $w$, and the $\alpha$- and $\beta$-curves surrounding them, it is clear that the sequence of handleslides (and a single destabilization) from $\overline{\mathbf{AZ}}\cup \bar{H}$ to $H$ induces sequence of handleslides and a single destabilization from $\overline{\mathbf{AZ}}\cup \phi(\bar{\X})$ to $\X$. A detailed process is drawn in \Cref{fig:HeegaardmovesX}.
\end{proof}

\begin{figure}[hbt]
    \centering
    \includegraphics[width=0.8\textwidth]{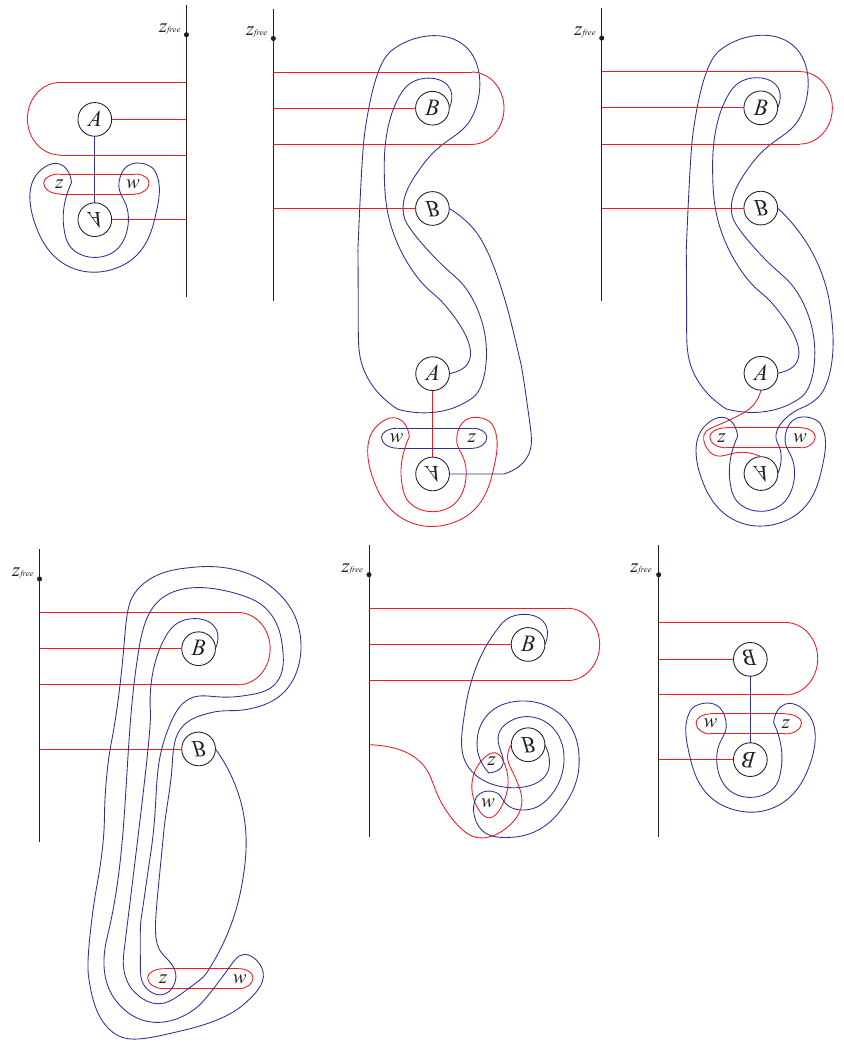}
    \caption{Top-left, the diagram $\X$. Top-middle, the diagram $\overline{\mathbf{AZ}}\cup\bar{\X}$. Top-right, the diagram $\overline{\mathbf{AZ}}\cup \phi(\bar{\X})$. Bottom-left, A diagram obtained from the one on the top-right by a sequence of handleslides, followed by a destabilization. Bottom-middle, A diagram obtained from the one on the bottom-left by another sequence of handleslides. Bottom-right, the diagram obtained by isotopy from the one on the bottom-middle. Note that this is the same as the original diagram $\X$.}
\label{fig:HeegaardmovesX}
\end{figure}

Choose a nice diagram $\X_0$ which is related by $\X$ by a sequence of Heegaard moves; such a diagram always exists by Sarkar-Wang algorithm \cite[Proposition 8.2]{lipshitz2018bordered}, and it is always provincially admissible. Then $\X_0$ has a well-defined bordered Floer homology. In particular, if we write $\X_0 = (\Sigma,\boldsymbol\alpha,\boldsymbol\beta,\{z,z_{free}\},w)$, then we have a well-defined type-D structure $CFD_{UV}(\X_0)$ and a type-A structure $CFA_{UV}(\X_0)$ over the module $\mathbb{F}_2[U,V]$, defined by counting holomorphic disks which do not intersect algebraically with $z_{free}$, while recording their algebraic intersection numbers with $z$ and $w$ by formal variables $U$ and $V$, respectively.

Recall from \cite[Chapter 10]{lipshitz2018bordered} that, given a bordered 3-manifold $Y$ with boundary $\mathcal{Z}$, the associated type-A module $\widehat{CFA}(Y)$ is graded by a transitive $G(\mathcal{Z})$-set, and for a doubly-pointed bordered Heegaard diagram $H=(\Sigma,\boldsymbol\alpha,\boldsymbol\beta,z,w)$ with the same boundary, the associated type-D module $\widehat{CFA}^-(H)$ admits an enhanced grading by a transitive $(G(\mathcal{Z})\times \mathbb{Z})$-set, where the grading on the $\mathbb{Z}$ component is given by $n_w-n_z$. We can define a grading on $CFA_{UV}(\X_0)$ by the group $G(T^2)\times \mathbb{Z}$ is a similar manner, as follows.

Write $\X_0=(\Sigma,\boldsymbol\alpha,\boldsymbol\beta,\{z,z_{free}\},w)$. Then for any choice of Floer generators $\mathbf{x}$ and $\mathbf{y}$ and a homology class $B\in \pi_2(\mathbf{x},\mathbf{y})$ of curves connecting $\mathbf{x}$ to $\mathbf{y}$, we define the relative grading $\tilde{g}(\mathbf{x},\mathbf{y})\in G(T^2)\times\mathbb{Z}$ as
\begin{equation}
\label{eqn:grading}
\tilde{g}(\mathbf{x},\mathbf{y})= (\lambda^{-2n_{z_{free}}(B)}g(\mathbf{x},\mathbf{y}),n_z(B)-n_w(B)),    
\end{equation}

where $\lambda$ is the central element $(1;0,0)$ of $G(T^2)$ and $g$ denotes the quantity determined by \cite[Formula 10.31]{lipshitz2018bordered}. This endows $CFA_{UV}(\X_0)$ with a grading by a transitive $(G(T^2)\times \mathbb{Z})$-set. After taking a box tensor product with $\widehat{CFD}(S^3 \backslash K)$, where $K$ is a knot, the gradings on $\widehat{CFD}(S^3 \backslash K)$ and $CFA_{UV}(\X_0)$ induce a grading on the tensor product.

\begin{lem}
\label{lem:Xpairing}
Given a knot $K\subset S^3$, denote the bordered 3-manifold representing its 0-framed complement as $S^3 \backslash K$. Then we have a pairing formula
\[
\widehat{CFD}(S^3 \backslash K) \boxtimes CFA_{UV}(\X_0) \simeq CFK_{UV}(S^3,K,z_{free}).
\]
Furthermore, the induced grading on the left hand side matches the bigrading (i.e. Maslov and Alexander) on the right hand side.
\end{lem}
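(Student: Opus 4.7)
The plan is to exhibit the equivalence by gluing Heegaard diagrams and invoking the bordered pairing theorem in a form that tracks the $U$, $V$, and free basepoints, then to verify gradings. First I would check the underlying topology: if $H$ is an $\alpha$-bordered Heegaard diagram for the $0$-framed complement $S^3\backslash K$, then $H\cup \X_0$ is a triply-pointed Heegaard diagram for $(S^3,K,z_{free})$. Indeed, filling the $0$-framed knot complement by the $\infty$-framed solid torus $T_\infty$ gives back $S^3$, and under this filling the longitudinal knot $\nu\subset T_\infty$ (carrying the basepoints $z,w$ of $\X_0$) becomes $K$, while the free basepoint $z_{free}$, which sits on the boundary torus of $\X$, descends to a point off $K$. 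Choosing $H$ to be admissible and using that $\X_0$ is provincially admissible as a nice diagram obtained from $\X$ via Sarkar--Wang, the glued diagram is admissible and has a well-defined $CF_{UV}$ complex.

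Next I would invoke the bordered pairing theorem in the $UV$-weighted setting. The proof of \cite[Theorem 11.19]{lipshitz2018bordered} adapts with essentially no modification once one observes that the basepoints $z$, $w$, and $z_{free}$ all lie inside $\X_0$, away from the bordered cylinder used for gluing. Consequently every broken holomorphic flowline in $H\cup \X_0$ decomposes along this cylinder into a curve in $H$ (contributing type-$D$ operations on $\widehat{CFD}(S^3\backslash K)$) and a curve in $\X_0$ (contributing type-$A$ operations on $CFA_{UV}(\X_0)$), and the weights $U^{n_z}V^{n_w}$ together with the constraint $n_{z_{free}}=0$ depend only on the $\X_0$-side. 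This yields the desired chain isomorphism
\[
\widehat{CFD}(S^3\backslash K)\boxtimes CFA_{UV}(\X_0)\xrightarrow{\simeq} CF_{UV}(H\cup \X_0)=CFK_{UV}(S^3,K,z_{free}),
\]
where the final equality is the definition of the multi-pointed knot Floer complex.

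For the grading statement, I would track the enhanced $G(T^2)\times\mathbb{Z}$-grading on $CFA_{UV}(\X_0)$ defined by \Cref{eqn:grading} through the pairing. By the graded version of the pairing theorem \cite[Chapter 10]{lipshitz2018bordered}, the $G(T^2)$-components pair against the $G(T^2)$-grading on $\widehat{CFD}(S^3\backslash K)$ to yield the Maslov grading on $CFK_{UV}(S^3,K,z_{free})$, while the $\mathbb{Z}$-factor $n_z(B)-n_w(B)$ is, by construction, the Alexander grading. The main obstacle is confirming that the twisting factor $\lambda^{-2n_{z_{free}}(B)}$ in the grading formula correctly compensates for domain multiplicities at the free basepoint, so that after pairing the $G(T^2)$-portion of the grading becomes trivial (as expected for the closed manifold $S^3$) and the resulting Maslov grading agrees with the standard one on $CFK_{UV}(S^3,K,z_{free})$. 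This is a concrete bookkeeping calculation using the central element $\lambda\in G(T^2)$ and the Maslov index formula for domains crossing multiple basepoints, in the same spirit as the free-stabilization computation \cite[Lemma 7.1]{zemke2019link} recalled earlier in this section.
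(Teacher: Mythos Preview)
Your proposal is correct and follows essentially the same approach as the paper: glue a bordered diagram for $S^3\backslash K$ to $\X_0$, observe that the glued diagram represents $(S^3,K,z_{free})$, and invoke the proof of the pairing theorem from \cite{lipshitz2018bordered} (which carries over since the extra basepoints live entirely on the $\X_0$ side); the grading statement likewise follows from the graded version of that proof. The only minor difference is that the paper chooses a \emph{nice} diagram $\mathcal{H}$ for $S^3\backslash K$ (rather than merely admissible), which makes the ``works trivially'' claim more immediate, and the paper is considerably terser about the grading verification than your proposal.
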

\begin{proof}
Choose a nice bordered Heegaard diagram $\mathcal{H}$ representing $S^3 \backslash K$. Since the proof of pairing theorem \cite[Theorem 1.3]{lipshitz2018bordered} works trivially for admissible diagrams, we have 
\[
\widehat{CFD}(S^3 \backslash K) \boxtimes CFA_{UV}(\X_0) \simeq CF_{UV}(\mathcal{H}\cup \X_0).
\]
The Heegaard diagram $\mathcal{H}\cup \X_0$ represents $K$, together with a free basepoint $z_{free}$ lying outside $K$, we get the desired homotopy equivalence. The statement about gradings follows directly from the arguments used in the proof of \cite[Theorem 1.3]{lipshitz2018bordered}.
\end{proof}

\begin{rem}
In the proof of \Cref{lem:Xpairing}, the term $CF_{UV}(\mathcal{H}\cup \X_0)$ is the Floer chain complex coming from cylindrical reformulation of Heegaard Floer homology, due to Lipshitz\cite{lipshitz2006cylindrical}. The original setting of cylindrical reformation is only for Heegaard diagrams with one basepoint, so it is natural to ask whether it also works for general diagrams $(\Sigma,\boldsymbol\alpha,\boldsymbol\beta,\mathbf{z})$, where the number of $\alpha$-curves may exceed the genus of $\Sigma$ (in which case we have more than one basepoints). Fortunately, the cylindrical reformation also works in those generalized settings; see \cite[Section 5.2]{ozsvath2008holomorphic} for details.
\end{rem}

\begin{lem}
There exist type-D homotopy equivalences:
\[
\begin{split}
\label{lem:truncationlem}
CFA_{UV}(\X_0) \otimes \mathbb{F}[U,V]/(U-1,V) &\simeq CFA_{UV}(\X_0) \otimes \mathbb{F}[U,V]/(U,V-1) \\
&\simeq \widehat{CFA}(0\text{-framed solid torus})\otimes \mathbb{F}^2 _2.
\end{split}
\]
\end{lem}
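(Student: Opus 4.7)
The plan is to interpret each specialization directly on the Heegaard diagram $\X_0$ and reduce the resulting multi-pointed diagram to a standard one via bordered Heegaard moves. There are two essentially independent steps: (i) proving the outer equivalence by a symmetry argument, and (ii) identifying each specialized complex with $\widehat{CFA}$ of the $0$-framed solid torus, tensored with a factor of $\mathbb{F}_2^2$.

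For step (i), I would exploit the $\mathbb{Z}_2$-symmetry of the pair $(T_\infty,\nu)$. Rotation of a meridian disk of $T_\infty$ by $\pi$, extended to be the identity on a collar of $\partial T_\infty$, is a self-diffeomorphism of $T_\infty$ that fixes $\nu$ setwise, swaps the two sides of $\nu$ (hence the basepoints $z$ and $w$), and fixes $z_{free}$. Just as in the construction of $\iota_{K,z_{free}}$ earlier in this section, this diffeomorphism descends to a homotopy skew-autoequivalence of $CFA_{UV}(\X_0)$ intertwining the actions of $U$ and $V$, and therefore sends the $(U-1,V)$-specialization isomorphically to the $(U,V-1)$-specialization.

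For step (ii), applied to the $(1,0)$-specialization, I would first observe that the differential of $CFA_{UV}(\X_0)\otimes\mathbb{F}[U,V]/(U-1,V)$ counts exactly those disks $\phi$ with $n_w(\phi)=n_{z_{free}}(\phi)=0$, with all $U^{n_z(\phi)}$-weights collapsing to $1$. Consequently, the specialized complex is literally the hat-type-A structure of the multi-pointed bordered diagram obtained from $\X_0$ by promoting $w$ and $z_{free}$ to genuine basepoints and demoting $z$ to an unmarked point. Locally near the former knot $\nu$, the small meridional $\beta$-circle that previously separated $z$ from $w$ may now be slid across the unmarked $z$ and destabilized against an adjacent $\alpha$-piece via a handleslide-plus-destabilization maneuver analogous to the one used in the proof of \Cref{lemma:Xduality}. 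The resulting diagram is the standard bordered diagram for the $0$-framed solid torus, decorated with an interior basepoint inherited from $w$ and the boundary free basepoint $z_{free}$; topologically, dropping the $z$-constraint amounts to a $0$-framed Dehn filling along $\nu\subset T_\infty$. Invoking bordered free-stabilization \cite[Lemma 7.1]{zemke2019link} to absorb the extra interior basepoint then introduces the tensor factor $\mathbb{F}_2^2$, yielding $\widehat{CFA}(0\text{-framed solid torus})\otimes\mathbb{F}_2^2$, as required.

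The main obstacle I expect lies in step (ii): one must explicitly verify the bordered Heegaard-move reduction (confirming in particular that the relevant destabilization can be performed without disturbing the boundary $\alpha$-arcs or $z_{free}$) and must confirm that the surviving boundary parametrization is the $0$-framing rather than the $\infty$-framing. This should be tractable by a picture-level analysis in the spirit of \Cref{fig:HeegaardmovesX}, and the $(0,1)$-specialization case follows either from a symmetric argument or, more directly, from the equivalence established in step (i).
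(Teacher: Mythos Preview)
Your step (i) is fine. The paper takes an even shorter route, arguing only the $(U,V-1)$ case; the first equivalence then follows because both specializations are identified with the same module.

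The gap is in step (ii), and it is exactly the obstacle you anticipate. Once $z$ is demoted, the small $\alpha$- and $\beta$-circles that used to separate $z$ from $w$ still surround the basepoint $w$, and there is no legitimate destabilization available: isotoping the small $\beta$-circle across the now-unmarked $z$-region cancels its intersections with the small $\alpha$-circle in pairs, and no handleslide produces a once-intersecting pair without dragging $w$ along. Worse, the end state you aim for---the standard $T_0$ diagram with a lone extra interior basepoint $w$ and \emph{no} extra circles---has the same generating set as the one-pointed diagram, so no $\mathbb{F}_2^2$ tensor factor can arise; the free-stabilization lemma you cite produces its $\mathbb{F}_2^2$ precisely from the two new intersection points $\theta^{\pm}$ on the added circles, not from a stray basepoint. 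The paper's proof avoids all of this by observing that no Heegaard moves are needed: after forgetting $w$, the two-pointed diagram $(\Sigma,\boldsymbol\alpha,\boldsymbol\beta,\{z,z_{free}\})$ is already, up to isotopy, the free-stabilization of a standard $T_0$ diagram near its basepoint---the small circles together with the surviving interior basepoint \emph{are} the stabilization region---so one reads off $\widehat{CFA}(T_0)\otimes\mathbb{F}_2^2$ immediately. Your argument is repaired by dropping the destabilization step and making this identification directly.
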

\begin{proof}
Write $\X_0=(\Sigma,\boldsymbol\alpha,\boldsymbol\beta,\{z,z_{free}\},w)$. Since truncating by $V=1$ is equivalent to forgetting the basepoint $w$, we have a following homotopy equivalence of type-D modules:
\[
CFA_{UV}(\X_0)\otimes \mathbb{F}[U,V]/(U,V-1) \simeq \widehat{CFA}(\Sigma,\boldsymbol\alpha,\boldsymbol\beta,\{z,z_{free}\}).
\]
Since we no longer have $w$ as a basepoint, the bordered Heegaard diagram $(\Sigma,\boldsymbol\alpha,\boldsymbol\beta,\{z,z_{free}\})$ is isotopic to the diagram we obtain by stabilizing a bordered Heegaard diagram representing the 0-framed solid torus near its basepoint. Since we are not counting holomorphic disks intersecting the stabilization region, it is clear, even without a neck-stretching argument, that we have a canonical isomorphism 
\[
\widehat{CFA}(\Sigma,\boldsymbol\alpha,\boldsymbol\beta,\{z,z_{free}\}) \simeq \widehat{CFA}(0\text{-framed solid torus})\otimes \mathbb{F}^2_2,
\]
which proves the lemma.
\end{proof}

Let $\bar{\X}_0$ be the conjugate diagram of $\X_0$, defined in the same way as $\bar{\X}$. Then, by \Cref{lemma:Xduality}, we know that $\overline{\mathbf{AZ}}\cup \phi(\bar{\X}_0)$ is related by a sequence of Heegaard moves to $\X_0$. As in the proof of \Cref{lem:Xpairing}, it is clear that we have a pairing formula 
\[
CFA_{UV}(\bar{\X}_0)\boxtimes \widehat{CFDA}(\overline{\mathbf{AZ}})\simeq CFA_{UV}(\bar{\X}_0\cup \overline{\mathbf{AZ}}),
\]
so any choice of a sequence of Heegaard moves from $\overline{\mathbf{AZ}}\cup \phi(\bar{\X}_0)$ to $\X_0$ induces a type-A morphism
\[
\iota_{\X}\,:\, CFA_{UV}(\phi(\bar{\X}_0)) \boxtimes \widehat{CFDA}(\overline{\mathbf{AZ}}) \rightarrow CFA_{UV}(\X_0).
\]
Note that $\iota_{\X}$ is a homotopy equivalence of type-A modules over $\mathbb{F}_2$, but not over $\mathbb{F}_2[U,V]$; this is because it intertwines the actions of $U$ and $V$. Thus $\iota_{\X}$ is a type-A homotopy \emph{skew-equivalence}. 

The definition of $\iota_{\X}$ depends on the choices that we have made in its construction. Choosing a different sequence of Heegaard moves may result in another homotopy equivalence which is not homotopic to $\iota_{\X}$, due to the lack of naturality for bordered Floer homology. However it will not affect the results of this paper; we only have to choose one sequence of Heegaard moves, once and for all.

Given a knot $K\subset S^3$ and a bordered Heegaard diagram $\mathcal{H}$ for the 0-framed complement of $K$, recall that we can choose a homotopy equivalence
\[
\iota_{S^3 \backslash K}\,:\,\widehat{CFD}(\bar{\mathcal{H}}\cup \overline{\mathbf{AZ}}) \rightarrow \widehat{CFD}(\mathcal{H}),
\]
which is an element of $\mathbf{Inv}_D(S^3 \backslash K)$. Furthermore, we have the following conjugation maps:
\[
\begin{split}
    \mathbf{conj}_{\X} &: CFA_{UV}(\X_0) \rightarrow CFA_{UV}(\bar{\X}_0), \\
    \mathbf{conj}_{S^3 \backslash K} &: CFD(\mathcal{H}) \rightarrow CFD(\bar{\mathcal{H}}).
\end{split}
\]
We consider the following composition of homotopy equivalences, which we will denote as $F_K$.
\[
\begin{split}
   F_K: CFK_{UV}(S^3,K,z_{free}) &\simeq CFA_{UV}(\X_0)\boxtimes \widehat{CFD}(\mathcal{H}) \\
    &\xrightarrow{\mathbf{conj}_{\X} \boxtimes \mathbf{conj}_{S^3 \backslash K}} CFA_{UV}(\bar{\X}_0) \boxtimes \widehat{CFD}(\bar{\mathcal{H}})\\
    &\xrightarrow{\phi_{\ast}\boxtimes \mathbf{id}} CFA_{UV}(\phi(\bar{\X}_0)) \boxtimes \widehat{CFD}(\bar{\mathcal{H}})\\
    &\simeq CFA_{UV}(\phi(\bar{\X}_0))\boxtimes \widehat{CFDA}(\overline{\mathbf{AZ}}) \boxtimes \widehat{CFDA}(\mathbf{AZ}) \boxtimes \widehat{CFD}(\bar{\mathcal{H}}) \\
    &\simeq CFA_{UV}(\phi(\bar{\X}_0) \cup \overline{\mathbf{AZ}}) \boxtimes \widehat{CFDA}(\mathbf{AZ}) \boxtimes \widehat{CFD}(\bar{\mathcal{H}}) \\
    &\xrightarrow{\iota_{\X}\boxtimes \iota_{S^3\backslash K}} CFA_{UV}(\X_0)\boxtimes \widehat{CFD}(\mathcal{H}) \simeq CFK_{UV}(S^3,K,z_{free})
\end{split}
\]

\begin{lem}
\label{lem:freestabinv}
For any choice of $\iota_{S^3 \backslash K}\in \mathbf{Inv}_D(S^3 \backslash K)$, the induced homotopy equivalence $F_K$ is homotopic to $\iota_{K,z_{free}}$.
\end{lem}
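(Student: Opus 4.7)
The strategy is to interpret $F_K$ as coming from a particular sequence of Heegaard moves on the closed glued diagram $\mathcal{H}\cup \X_0$, which represents $(S^3, K, z_{free})$, and then invoke naturality for this closed multipointed diagram to identify $F_K$ with $\iota_{K,z_{free}}$.

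First I would set up the comparison on the closed diagram. The conjugate of $\mathcal{H}\cup \X_0$, up to the standard identification of basepoints, is the glued diagram $\bar{\mathcal{H}}\cup \bar{\X}_0$, and the closed conjugation map $\mathbf{conj}$ factors through the pairing isomorphism as $\mathbf{conj}_{S^3\backslash K}\boxtimes \mathbf{conj}_{\X}$. Since the knot $K$ and its basepoints $z,w$ sit entirely inside the $\X_0$ piece of the diagram, one may arrange the half-twist diffeomorphism $\phi$ to be supported there, so that $\phi_{\ast}$ on the closed diagram factors as $\mathbf{id}\boxtimes \phi_{\ast}$ under the pairing. This reduces the problem to comparing the Heegaard-move factors.

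Next I would construct an explicit sequence of Heegaard moves from $\bar{\mathcal{H}}\cup \phi(\bar{\X}_0)$ to $\mathcal{H}\cup \X_0$ by doing three things in order: (i) insert the trivial cylinder $\mathbf{AZ}\cup \overline{\mathbf{AZ}}$ at the gluing interface, obtaining $\bar{\mathcal{H}}\cup \mathbf{AZ}\cup \overline{\mathbf{AZ}}\cup \phi(\bar{\X}_0)$, which is connected to $\bar{\mathcal{H}}\cup \phi(\bar{\X}_0)$ by handleslides and a (de)stabilization coming from \cite[Theorem 4.6]{lipshitz2011heegaard}; (ii) apply, on the left half, the fixed sequence of bordered Heegaard moves from $\mathbf{AZ}\cup \bar{\mathcal{H}}$ to $\mathcal{H}$ that underlies the chosen $\iota_{S^3\backslash K}\in \mathbf{Inv}_D(S^3\backslash K)$; (iii) apply, on the right half, the sequence of bordered Heegaard moves from $\overline{\mathbf{AZ}}\cup \phi(\bar{\X}_0)$ to $\X_0$ produced by \Cref{lemma:Xduality} that underlies $\iota_{\X}$. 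The pairing theorem together with its triangle-map refinement \cite[Proposition 5.35]{lipshitz2016bordered} identifies the chain map induced by this composite sequence of closed Heegaard moves with the box-tensor product $\iota_{\X}\boxtimes \iota_{S^3\backslash K}$ applied after the pairing isomorphism $CFA_{UV}(\phi(\bar{\X}_0)\cup \overline{\mathbf{AZ}})\boxtimes \widehat{CFDA}(\mathbf{AZ})\boxtimes \widehat{CFD}(\bar{\mathcal{H}})\simeq CFA_{UV}(\phi(\bar{\X}_0))\boxtimes \widehat{CFDA}(\overline{\mathbf{AZ}})\boxtimes \widehat{CFDA}(\mathbf{AZ})\boxtimes \widehat{CFD}(\bar{\mathcal{H}})$ that appears in the definition of $F_K$.

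Finally I would invoke naturality. By definition, $\iota_{K,z_{free}}=\Phi\circ \phi_{\ast}\circ \mathbf{conj}$ for \emph{any} chosen sequence of Heegaard moves $\Phi$ from $\phi(\overline{\mathcal{H}\cup \X_0})$ to $\mathcal{H}\cup \X_0$, and this homotopy class is independent of the choice by naturality of multipointed Heegaard Floer homology \cite{juhasz2012naturality}. The composition constructed above is exactly one such realization, and its factored form along the gluing interface recovers, term-for-term, the definition of $F_K$. Hence $F_K\simeq \iota_{K,z_{free}}$, and the independence of the choice of $\iota_{S^3\backslash K}\in \mathbf{Inv}_D(S^3\backslash K)$ follows from the naturality statement on the closed side, since any two such choices only differ by passing through different sequences of Heegaard moves on the left factor, all of which yield the same closed-diagram chain homotopy class.

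The main obstacle I anticipate is step (iii) above: verifying that concatenating the bordered sequences of handleslides (and a destabilization) from the two halves really does produce a valid sequence of closed Heegaard moves whose triangle counts split, up to homotopy, as a box tensor product $\iota_{\X}\boxtimes \iota_{S^3\backslash K}$. This is technically a gluing/neck-stretching argument in the style of \cite[Theorem 5.1]{hendricks2019involutivebordered}, and care is needed because the $\X$ side carries two variables $U,V$ and a free basepoint, so one must check that the standard pairing and triangle pairing theorems still hold in this multipointed, two-variable setting; this is handled by the cylindrical reformulation used in the proof of \Cref{lem:Xpairing}.
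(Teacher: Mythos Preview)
Your proposal is correct and is precisely the argument the paper intends: the paper's own proof consists of the single sentence ``One can use the argument used in the proof of \cite[Theorem 5.1]{hendricks2019involutivebordered} verbatim,'' and what you have written is exactly a careful unpacking of that argument adapted to the two-variable, free-basepoint setting. Your identification of the delicate point---that the pairing and triangle-pairing theorems must be checked in this multipointed $U,V$-setting, handled via the cylindrical reformulation as in \Cref{lem:Xpairing}---is the only genuinely new ingredient beyond Hendricks--Lipshitz, and you have located it correctly.
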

\begin{proof}
One can use the argument used in the proof of \cite[Theorem 5.1]{hendricks2019involutivebordered} verbatim.
\end{proof}

For later use, we prove the following lemma.

\begin{lem}
\label{lem:eliminationlem}
Given a knot $K$, suppose that there exists a local chain map 
\[f:CFK_{UV}(S^3,K,z_{free}) \rightarrow CFK_{UV}(S^3,\mathbf{unknot},z_{free})
\]
which preserves the Alexander and Maslov gradings, such that $f\circ \iota_{K,z_{free}}\sim f$. Then there also exists a local (bidegree-preserving) chain map $g:CFK_{\mathcal{R}}(S^3,K)\rightarrow \mathcal{R}$.
\end{lem}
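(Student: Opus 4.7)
The plan is to reduce $f$ modulo $UV$ to an $\mathcal{R}$-coefficient chain map $\bar f$, use the fact that the free-stabilized complex splits over $\mathcal{R}$ into two copies of the non-stabilized complex, and extract $g$ as the $\theta^+\to\theta^+$ diagonal component of $\bar f$, with the bigrading structure forcing this component to be local.

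Since $f$ is $\mathbb{F}_2[U,V]$-equivariant, tensoring with $\mathcal{R}$ yields a local bigrading-preserving chain map $\bar f : CFK_{\mathcal{R}}(S^3, K, z_{free}) \to CFK_{\mathcal{R}}(S^3, \text{unknot}, z_{free})$. By the free-stabilization formula recalled earlier, the $UV$ off-diagonal term in the stabilized differential vanishes in $\mathcal{R}$, so both source and target of $\bar f$ split as chain complexes over $\mathcal{R}$ of the form $CFK_{\mathcal{R}}(\cdot)\otimes(\mathbb{F}_2\theta^+\oplus\mathbb{F}_2\theta^-)$ with block-diagonal differential. In particular, both the free-stabilization inclusion $S^+_{z_{free}}:\mathbf{x}\mapsto\mathbf{x}\otimes\theta^+$ and the projection $T^+_{z_{free}}:\mathbf{x}\otimes\theta^+\mapsto\mathbf{x},\ \mathbf{x}\otimes\theta^-\mapsto 0$ are bigrading-preserving chain maps over $\mathcal{R}$; note that $T^+_{z_{free}}$ fails to be a chain map over the full ring $\mathbb{F}_2[U,V]$, which is why the mod-$UV$ reduction is essential. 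Define $g := T^+_{z_{free}} \circ \bar f \circ S^+_{z_{free}} : CFK_{\mathcal{R}}(S^3,K) \to \mathcal{R}$, which is automatically an $\mathcal{R}$-equivariant bigrading-preserving chain map.

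The crux is verifying locality of $g$. Consider $U^{-1}\bar f|_{V=0}$, a chain map of $\mathbb{F}_2[U^{\pm 1}]$-modules whose source and target homologies are both free of rank two, generated by the classes of $1\otimes\theta^+$ and $1\otimes\theta^-$ on each side. The relation $\partial\theta^- = UV\theta^+$ forces the Maslov grading of $\theta^-$ to differ from that of $\theta^+$ by the odd integer $-3$, while every homogeneous element of $\mathbb{F}_2[U^{\pm 1}]$ has even Maslov grading. It follows that any bigrading-preserving $\mathbb{F}_2[U^{\pm 1}]$-linear endomorphism of $\mathbb{F}_2[U^{\pm 1}]^{\oplus 2}$ whose two summand gradings differ by an odd integer must be diagonal, with entries in $\mathbb{F}_2$. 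Hence the induced matrix of $U^{-1}\bar f|_{V=0}$ on homology has the form $\mathrm{diag}(\lambda,\mu)$ with $\lambda,\mu\in\mathbb{F}_2$; locality of $\bar f$ forces this matrix to be injective, so $\lambda=\mu=1$. Tracing through the inclusion-projection, $U^{-1}g|_{V=0}$ on homology equals the $(+,+)$ entry of this matrix, hence the identity map. The identical argument with the roles of $U$ and $V$ swapped shows that $V^{-1}g|_{U=0}$ is an isomorphism on homology, completing the verification that $g$ is local. The main obstacle is this parity-based bigrading argument; without it, a naive inclusion-projection could allow off-diagonal mixing between the $\theta^+$ and $\theta^-$ summands, and the resulting $g$ would not be local a priori.
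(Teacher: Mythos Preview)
Your construction is essentially the paper's: both precompose $f$ with the free-stabilization $S^+_{z_{free}}$, reduce modulo $UV$, and project off the $\theta^-$ summand (your $T^+_{z_{free}}$ coincides with the paper's projection $p$ on the unknot target, since $CFK_{\mathcal{R}}(S^3,\mathbf{unknot})=\mathcal{R}$). Your parity argument for locality spells out what the paper compresses into ``due to grading reasons.'' One minor correction: in the standard $\mathrm{gr}_w$ convention ($U$ in degree $-2$, $V$ in degree $0$), the relation $\partial\theta^-=UV\theta^+$ gives a Maslov grading difference of $-1$, not $-3$; the argument only needs oddness, so this is harmless.

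The real issue is that you never use the hypothesis $f\circ\iota_{K,z_{free}}\sim f$, and correspondingly never show that $g$ satisfies $g\circ\iota_K\sim g$. The paper does: using the compatibility $S^+_{z_{free}}\circ\iota_K\sim\iota_{K,z_{free}}\circ S^+_{z_{free}}$ established just above the lemma, one gets
\[
(f\circ S^+_{z_{free}})\circ\iota_K \;\sim\; f\circ\iota_{K,z_{free}}\circ S^+_{z_{free}} \;\sim\; f\circ S^+_{z_{free}},
\]
and this survives reduction mod $UV$ and the projection $p$ to yield $g\circ\iota_K\sim g$. Although the lemma's conclusion, as literally written, omits this condition, it is precisely the property invoked when the lemma is applied in the proof of Theorem~1.1 (``we have an $\iota_K$-local chain map''). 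Without it the lemma is vacuous: a bidegree-preserving local map $CFK_{\mathcal{R}}(S^3,K)\to\mathcal{R}$ exists for every knot $K$ in $S^3$ on general grounds, with no hypothesis on $\iota_{K,z_{free}}$ needed. You should add this short $\iota_K$-equivariance deduction; it plugs directly into your setup via $g\circ\iota_K = T^+_{z_{free}}\circ\bar f\circ S^+_{z_{free}}\circ\iota_K \sim T^+_{z_{free}}\circ\bar f\circ\bar\iota_{K,z_{free}}\circ S^+_{z_{free}} \sim T^+_{z_{free}}\circ\bar f\circ S^+_{z_{free}} = g$.
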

\begin{proof}
Consider the free-stabilization map 
\[
S^+_{z_{free}}:CFK_{UV}(S^3,K)\rightarrow CFK_{UV}(S^3,K,z_{free}).
\]
Then we have 
\[
f\circ S^+_{z_{free}}\circ \iota_K \sim f\circ \iota_{K,z_{free}}\circ S^+_{z_{free}} \sim f \circ S^+_{z_{free}}.
\]
Since the codomain $f\circ S^+_{z_{free}}$ is $\mathcal{R}$, it induces a chain map 
\[
f_{\mathcal{R}}:CFK_{\mathcal{R}}(S^3,K)=CFK_{UV}(S^3,K)\otimes \mathcal{R} \xrightarrow{(f\circ S^+_{z_{free}}) \otimes \mathbf{id}_{\mathcal{R}}} CFK_{UV}(S^3,\mathbf{unknot},z_{free})\otimes \mathcal{R}.
\]
Since $U^{-1}f_{\mathcal{R}}\vert_{V=0}=U^{-1}(f\circ S^+_{z_{free}})\vert_{V=0}$ and $V^{-1}f_{\mathcal{R}}\vert_{U=0}=V^{-1}(f\circ S^+_{z_{free}})\vert_{U=0}$, and the maps $f$ and $S^+_{z_{free}}$ are local, we deduce that $f_{\mathcal{R}}$ is also local.

Recall that the differential on $CFK_{UV}(S^3,\mathbf{unknot},z_{free})=(\mathbb{F}_2[U,V]\mathbf{x}_+\oplus \mathbb{F}_2[U,V]\mathbf{x}_-,\partial)$ is given by 
\[
\partial \mathbf{x}_+ =0,\,\partial \mathbf{x}_-=UV\mathbf{x}_+.
\]
Since $UV=0$ in $\mathcal{R}$, we can define a projection map 
\[
p:CFK_{UV}(S^3,\mathbf{unknot},z_{free})\otimes \mathcal{R}\rightarrow \mathcal{R}
\]
by $p(\mathbf{x}_+)=1$ and $p(\mathbf{x}_-)=0$. Then the composed map $g=p \circ f_{\mathcal{R}}$ satisfies 
\[
g\circ \iota_K = p \circ f_{\mathcal{R}} \circ \iota_K \sim p \circ f_{\mathcal{R}} = g.
\]
Furthermore, $g$ is a local map due to grading reasons. Therefore $g$ is the desired map.
\end{proof}

\section{Involutive knot Floer homology and involutive bordered Floer homology}
\label{sec:fromsurgerytohfk}

Recall that, for any two bordered 3-manifold $M,N$ with the same boundary, we have a pairing formula 
\[
\widehat{CF}(M\cup N)\simeq \mathbf{Mor}(\widehat{CFD}(-M),\widehat{CFD}(N)).
\]
Note that the cycles in the morphism space correspond to type-D morphisms, and boundaries correspond to nullhomotopic morphisms. Consider the case when $M$ is the 0-framed complement of a knot $K$ and $N$ is the 0-framed solid torus. Then we have $S^3 _{0} (-K)\simeq -M\cup N$, so the pairing formula induces a homotopy equivalence 
\[
\widehat{CF}(S^3 _{0}(-K)) \simeq \mathbf{Mor}(\widehat{CFD}(S^3\backslash K),\widehat{CFD}(T_0)),
\]
where $T_0$ denotes the 0-framed solid torus. Now, by \Cref{lem:Xpairing}, we get a chain map:
\[
\begin{split}
    F_{\X}:\widehat{CF}(S^3 _{0}(-K)) &\xrightarrow{\simeq} \mathbf{Mor}(\widehat{CFD}(S^3\backslash K),\widehat{CFD}(T_0)) \\
    &\xrightarrow{f\mapsto \mathbf{id}\boxtimes f}\mathbf{Hom}(CFA_{UV}(\X_0) \boxtimes \widehat{CFD}(S^3 \backslash K), CFA_{UV}(\X_0) \boxtimes \widehat{CFD}(T_0))  \\
    &\xrightarrow{\text{pairing}} \mathbf{Hom}(CFK_{UV}(S^3,K,z_{free}),CFK_{UV}(S^3,\mathbf{unknot},z_{free})).
\end{split}
\]
On the other hand, by pairing with $\widehat{CFA}(\infty\text{-framed solid torus})$ instead of $CFA_{UV}(\X_0)$, we also get a chain map $$F:\widehat{CF}(S^3_{0}(-K))\rightarrow \mathbf{Hom}(\widehat{CF}(S^3),\widehat{CF}(S^3))=\mathbb{F}_2.$$

\begin{lem}
\label{lem:cobordismlem}
Let $X_{0}(-K)$ be the punctured 0-trace of the knot $-K$, i.e. the 4-manifold obtained by attaching a 0-framed 2-handle to $S^3 \times I$ along $-K\times \{ 1\}$. Then the map $x\mapsto F(x)(1):\widehat{CF}(S^3 _{0}(-K))\rightarrow \widehat{CF}(S^3)$ is the hat-flavored cobordism map induced by the cobordism $X_{0}(-K)$, flipped upside-down.
\end{lem}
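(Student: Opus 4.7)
The plan is to identify both $x\mapsto F(x)(1)$ and the upside-down cobordism map induced by $X_0(-K)$ with a single triangle count in a carefully chosen Heegaard triple, and to invoke the pairing theorem for holomorphic triangles in bordered Heegaard Floer homology \cite[Proposition 5.35]{lipshitz2016bordered} to bridge the two sides.

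To set up the triangle, I would choose a bordered Heegaard diagram $\mathcal{H}$ for the $0$-framed complement $S^3 \backslash K$, together with standard bordered Heegaard diagrams $\mathcal{T}_0$ and $\mathcal{T}_\infty$ for $T_0$ and $T_\infty$. On the torus piece I would build a triple diagram $(T^2,\boldsymbol\alpha_\partial,\boldsymbol\beta,\boldsymbol\gamma)$ in which $\boldsymbol\beta$ reproduces the $0$-filling coming from $\mathcal{T}_0$ and $\boldsymbol\gamma$ reproduces the $\infty$-filling coming from $\mathcal{T}_\infty$, and such that the $(\boldsymbol\beta,\boldsymbol\gamma)$-subdiagram is a standard genus-one diagram for $S^3$ carrying a distinguished top-degree generator $\Theta_{\beta\gamma}$. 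Gluing this triple to $\mathcal{H}$ yields a closed Heegaard triple $(\Sigma,\boldsymbol\alpha,\boldsymbol\beta,\boldsymbol\gamma)$ whose associated 4-manifold, after capping off redundant $S^1\times S^2$-boundary components with $3$- and $4$-handles, is precisely the punctured $0$-trace $X_0(-K)$, oriented as a cobordism from $S^3_0(-K)$ down to $S^3$.

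By the standard Ozsv\'ath--Szab\'o description of 2-handle cobordism maps via triangle counts \cite{ozsvath2006holomorphic}, the map $x\mapsto \widehat{f}_{\alpha\beta\gamma}(x\otimes \Theta_{\beta\gamma})$ then equals the hat-flavored cobordism map induced by $X_0(-K)$ flipped upside-down. On the other hand, applying the pairing theorem for holomorphic triangles to the decomposition of the above Heegaard triple along its parametrized torus factorizes this triangle count as $\mathbf{id}_{\widehat{CFA}(T_\infty)}\boxtimes f_x$, where $f_x:\widehat{CFD}(S^3\backslash K)\to\widehat{CFD}(T_0)$ is the morphism associated to $x$ under the morphism pairing $\widehat{CF}(S^3_0(-K))\simeq \mathbf{Mor}(\widehat{CFD}(S^3\backslash K),\widehat{CFD}(T_0))$. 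Post-composed with the identification $\widehat{CFA}(T_\infty)\boxtimes\widehat{CFD}(T_0)\simeq\widehat{CF}(S^3)$ that sends the canonical generator to $1$, this is precisely the definition of $x\mapsto F(x)(1)$.

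The main technical obstacle is conventional bookkeeping: one must verify that the 4-manifold assembled from the Heegaard triple is $X_0(-K)$ with the correct orientation rather than its mirror, that the auxiliary boundary components really do cap off trivially, and that the top generator $\Theta_{\beta\gamma}$ pairs with $\widehat{CFA}(T_\infty)$ to yield the cycle representing $1\in\widehat{HF}(S^3)$ used in the definition of $F(x)(1)$. Modulo these checks, no new ideas beyond the triangle pairing theorem and the standard Heegaard Floer cobordism formula are needed.
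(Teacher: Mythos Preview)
Your approach and the paper's are related in spirit---both ultimately identify the map with a pair-of-pants cobordism---but the paper's argument is shorter and more direct, and yours has a genuine gap at the key step.

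The paper does not build a Heegaard triple by hand. Instead it invokes \cite[Section~1.5]{lipshitz2011heegaard}, which shows that the ``apply a morphism'' operation $f\mapsto \mathbf{id}\boxtimes f$ in the morphism-space model corresponds to the cobordism map for the explicit pair-of-pants 4-manifold
\[
W_0=(\triangle\times T)\cup_{e_1\times T}(e_1\times(S^3\backslash K))\cup_{e_2\times T}(e_2\times T_\infty)\cup_{e_3\times T}(e_3\times T_0).
\]
One then observes that $W_0$ has boundary components $S^3_0(-K)$, $S^3$, and $S^3$, and that capping off the $T_0\cup T_\infty$ boundary with a $4$-ball yields the flipped trace. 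This is the whole proof.

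The gap in your argument is the sentence ``applying the pairing theorem for holomorphic triangles \ldots\ factorizes this triangle count as $\mathbf{id}_{\widehat{CFA}(T_\infty)}\boxtimes f_x$.'' The pairing theorem you cite, \cite[Proposition~5.35]{lipshitz2016bordered}, factorizes glued triangle maps through \emph{box tensor products}. But the morphism $f_x$ is defined via the \emph{morphism pairing} $\widehat{CF}(S^3_0(-K))\simeq\mathbf{Mor}(\widehat{CFD}(S^3\backslash K),\widehat{CFD}(T_0))$, which is a different identification passing through $\widehat{CFAA}(\mathbf{AZ})$. You give no reason why the bordered triangle map produced by your triple agrees, under that identification, with $f_x$; establishing precisely this compatibility between morphism-space operations and cobordism maps is the content of \cite[Section~1.5]{lipshitz2011heegaard}, so your argument either has a hole or must silently reprove that result. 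A secondary issue: as described, your triple has $\boldsymbol\beta$ and $\boldsymbol\gamma$ only on the torus piece, so gluing to $\mathcal{H}$ does not produce a closed Heegaard triple; fixing this requires extending the curves over $\mathcal{H}$ in a way that still yields the correct three boundary $3$-manifolds, and this bookkeeping is not addressed.
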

\begin{proof}
Discussions in \cite[Section 1.5]{lipshitz2011heegaard} tells us that the map $F:\widehat{CF}(S^3_{0}(-K))\otimes \widehat{CF}(S^3)\rightarrow \widehat{CF}(S^3)$ is the cobordism map induced by the 4-manifold $W_0$ given by
\[
W_0=(\triangle\times T)\cup _{e_1\times T} (e_1\times (S^3 \backslash K))\cup_{e_2\times T} (e_2\times T_\infty) \cup_{e_3 \times T} (e_3 \times T_0),
\]
where $\triangle$ denotes a triangle with edges $e_1,e_2,e_3$, and $T$ denotes a torus. Note that $W_0$ has three boundary components given by $S^3 _{0}(-K)=-(S^3\backslash K)\cup T_0$, $S^3=T_0 \cup T_\infty$, and $S^3=(S^3 \backslash K)\cup T_\infty$. Hence the cobordism map induced by 4-manifold $W$ obtained by gluing a 4-ball to the second boundary, i.e. 
\[
W=W_0 \cup_{T_0\cup T_\infty} B^4,
\]
is the given map $x\mapsto F(x)(1)$. Since $W$ is diffeomorphic to $X_0(K)$, flipped upside-down, the lemma follows.
\end{proof}

The following example explains \Cref{lem:cobordismlem} in the case when $K$ is the unknot.
\begin{exmp}
\label{exmp:toymodel}
Let $K$ be the unknot. Then $S^3 \backslash K\simeq T_0$, $S^3 _0 (-K) \simeq S^1 \times S^2$, and $X_0 (-K)\simeq D^2 \times S^2$. The type-D module of the 0-framed solid torus $T_0$ is freely generated over the torus algebra $\mathcal{A}(T^2)$, which is generated (over $\mathbb{F}_2$) by the set 
\[
\{ \iota_0,\iota_1,\rho_1,\rho_2,\rho_3,\rho_{12},\rho_{23},\rho_{123} \},
\]
by a single element $x$, and the differential is given by $\partial x=\rho_{12}x$. The identity morphism
\[
\mathbf{id}:\widehat{CFD}(S^3 \backslash K)=\widehat{CFD}(T_0)\rightarrow \widehat{CFD}(T_0)
\]
corresponds to the $\frac{1}{2}$-graded generator in $\widehat{CFD}(S^1 \times S^2)\simeq \mathbb{F}_2 \left[ \frac{1}{2} \right] \oplus \mathbb{F}_2 \left[ -\frac{1}{2} \right]$. Note that the $-\frac{1}{2}$-graded generator corresponds to the map $x\mapsto \rho_{12}x$. The cobordism map 
\[
x\mapsto F(x)(1):\widehat{CF}(S^1 \times S^2)\rightarrow \widehat{CF}(S^3)
\]
induced by $D^2 \times S^2$ which bounds $S^1 \times S^2$ is a map of degree $-\frac{1}{2}$, which maps the $\frac{1}{2}$-graded generator (which corresponds to the identity morphism) to $1$ and the $-\frac{1}{2}$-graded generator to 0.
\end{exmp}

\begin{lem}
\label{lem:localmaplem}
Let $K$ be a knot such that $(CFK_{\mathcal{R}}(S^3,K),\iota_K)$ is locally equivalent to the trivial complex. Then there exists a cycle $x\in \widehat{HF}(S^3 _0 (-K))$ of absolute $\mathbb{Q}$-grading $\frac{1}{2}$, which is mapped to the unique homotopy autoequivalence $[\mathbf{id}]\in H_{\ast}(\mathbf{Hom}(\widehat{CF}(S^3),\widehat{CF}(S^3)))$ under the map $F$.
\end{lem}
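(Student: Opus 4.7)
The plan is to produce the cycle $x$ as the class associated to a type-D morphism $g:\widehat{CFD}(S^3\backslash K)\to\widehat{CFD}(T_0)$ under the morphism space pairing
\[
\widehat{HF}(S^3_0(-K)) \simeq H_\ast\mathbf{Mor}(\widehat{CFD}(S^3\backslash K),\widehat{CFD}(T_0)),
\]
constructed from the $\iota_K$-local equivalence hypothesis via the $\X$-pairing of \Cref{lem:Xpairing}. The grading and the identity $F(x)=[\mathbf{id}]$ will then be checked using the compatibility of this construction with the various truncations and pairings.

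First, the hypothesis supplies a local, bigrading-preserving chain map $f_{\mathcal{R}}:CFK_{\mathcal{R}}(S^3,K)\to\mathcal{R}$ satisfying $f_{\mathcal{R}}\circ\iota_K\sim f_{\mathcal{R}}$. Working in the free-basepoint framework developed in \Cref{sec:freebasepoint}, I will extend $f_{\mathcal{R}}$ to a local, bigrading-preserving chain map
\[
\tilde{f}:CFK_{UV}(S^3,K,z_{free})\to CFK_{UV}(S^3,\mathbf{unknot},z_{free})
\]
satisfying $\tilde{f}\circ\iota_{K,z_{free}}\sim\tilde{f}$. The construction uses the quasi-stabilization description of the free-basepoint complex together with the compatibility $S^+_{z_{free}}\circ\iota_K\sim\iota_{K,z_{free}}\circ S^+_{z_{free}}$ established in \Cref{sec:freebasepoint}, while the $UV$-torsion differential $\partial\theta^-=UV\theta^+$ in $CFK_{UV}(S^3,\mathbf{unknot},z_{free})$ provides the room required to carry the $\mathcal{R}$-coefficient data into the $UV$-coefficient setting.

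By \Cref{lem:Xpairing}, the domain and codomain of $\tilde{f}$ are identified (up to homotopy equivalence) with $CFA_{UV}(\X_0)\boxtimes\widehat{CFD}(S^3\backslash K)$ and $CFA_{UV}(\X_0)\boxtimes\widehat{CFD}(T_0)$, and the adjunction between box tensor products and morphism spaces in bordered Floer theory converts $\tilde{f}$ to a type-D morphism $g:\widehat{CFD}(S^3\backslash K)\to\widehat{CFD}(T_0)$, well-defined up to chain homotopy. Define $x\in\widehat{HF}(S^3_0(-K))$ to be the class corresponding to $g$ under the morphism space pairing. To verify $F(x)=[\mathbf{id}]$, observe that $F(x)$ is represented by $\widehat{CFA}(T_\infty)\boxtimes g:\widehat{CF}(S^3)\to\widehat{CF}(S^3)$, which under the pairing identifications is the $\infty$-framing truncation of $\tilde{f}$; locality of $\tilde{f}$ forces this induced endomorphism of $\widehat{HF}(S^3)\cong\mathbb{F}_2$ to be nonzero, hence equal to $[\mathbf{id}]$. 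For the grading, the bigrading-preservation of $\tilde{f}$ together with the grading matching in \Cref{lem:Xpairing} shows that $g$ has the bordered grading which translates under the morphism space pairing to Maslov grading $\frac{1}{2}$ on $\widehat{HF}(S^3_0(-K))$, in agreement with the unknot toy model of \Cref{exmp:toymodel}.

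The main obstacle lies in the lifting step: one must extend an $\mathcal{R}$-coefficient, $\iota_K$-equivariant map into the $UV$-coefficient free-basepoint setting while simultaneously preserving the bigrading, the locality, and the $\iota_{K,z_{free}}$-equivariance. Because $\mathcal{R}=\mathbb{F}_2[U,V]/(UV)$ is not naturally a direct summand of $\mathbb{F}_2[U,V]$, there is no straightforward lift of the target, so the construction of $\tilde{f}$ requires a careful obstruction-theoretic argument exploiting the $UV$-torsion differential of $CFK_{UV}(S^3,\mathbf{unknot},z_{free})$ and the involutive compatibility of the free-stabilization map; verifying that the resulting type-D morphism $g$ yields the correct grading and pairs to the identity under $\widehat{CFA}(T_\infty)$ is then a direct consequence of the framework established in \Cref{sec:freebasepoint}.
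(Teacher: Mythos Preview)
Your proposal has a genuine gap at the ``adjunction'' step. You assert that a chain map
\[
\tilde f:\;CFA_{UV}(\X_0)\boxtimes\widehat{CFD}(S^3\backslash K)\;\longrightarrow\;CFA_{UV}(\X_0)\boxtimes\widehat{CFD}(T_0)
\]
can be converted, via ``the adjunction between box tensor products and morphism spaces in bordered Floer theory,'' into a type-D morphism $g:\widehat{CFD}(S^3\backslash K)\to\widehat{CFD}(T_0)$. There is no such adjunction. The map $F_{\X}$ in the paper goes only one way: it sends a type-D morphism $g$ to the chain map $\mathbf{id}_{CFA_{UV}(\X_0)}\boxtimes g$. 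The image of this construction is a small subspace of the full $\mathbf{Hom}$ space (indeed, $\mathbf{Mor}(\widehat{CFD}(S^3\backslash K),\widehat{CFD}(T_0))\simeq\widehat{CF}(S^3_0(-K))$ is a finite-dimensional $\mathbb{F}_2$-complex, while the $\mathbf{Hom}$ space of $\mathbb{F}_2[U,V]$-linear maps is vastly larger), and you give no argument that your particular $\tilde f$ lies in this image up to homotopy. The morphism-space pairing identifies $\mathbf{Mor}(\widehat{CFD}(M_1),\widehat{CFD}(M_2))$ with a Heegaard Floer complex, not with the space of chain maps between box tensors against a fixed type-A module; these are different objects. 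So even granting the lifting from $\mathcal R$ to the $UV$ free-basepoint setting (which you yourself flag as the ``main obstacle'' and do not carry out), you would still not have produced an element of $\widehat{HF}(S^3_0(-K))$.

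The paper's argument avoids this difficulty by taking an entirely different route that never passes through the bordered or free-basepoint picture. Via \Cref{lem:cobordismlem} it identifies the map $F$ with the hat-flavored cobordism map of the flipped punctured $0$-trace, and then invokes the involutive mapping cone model $\widehat{CF}(S^3_0(-K),[0])\simeq\mathbf{Cone}(D_0:\hat A_0\to\hat B_0)$ from the involutive surgery formula. A local map $g:\mathcal R\to CFK_{\mathcal R}(S^3,K)$ (note the direction, \emph{into} $K$ rather than out of it) then induces, at the level of mapping cones, an $\iota$-equivariant local map $F_g:\widehat{CF}(S^1\times S^2)\to\widehat{CF}(S^3_0(-K),[0])$, and the class $x$ is taken to be $F_g(x_0)$ for the degree-$\tfrac12$ generator $x_0\in\widehat{HF}(S^1\times S^2)$. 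This also yields $\iota_{S^3_0(-K)}$-invariance of $x$, which, although not stated in the lemma, is essential for its application in the proof of \Cref{thm:mainthm1} and is not visibly produced by your scheme.
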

\begin{proof}
By \Cref{lem:cobordismlem}, we know that the map $x\mapsto F(x)(1):\widehat{CF}(S^3 _{0}(-K))\rightarrow \widehat{CF}(S^3)$ is the hat-flavored cobordism map induced by the cobordism $W$ by flipping the 0-framed 2-handle attaching map along $-K$ upside-down. Recall from the involutive mapping cone formula \cite[Section 22.9]{hendricks2020surgery} that the Heegaard Floer homology of $S^3 _0 (-K)$ is homotopy equivalent to a complex of the form
\[
\widehat{CF}(S^3 _0 (-K),[0])\simeq \mathbf{Cone}(D_0 :\hat{A}_0 \rightarrow \hat{B}_0),
\]
and the involution $\iota_{S^3 _0 (-K)}$ takes the form $\iota_A + \iota_B + H$, where $\iota_A$ and $\iota_B$ are the involutions on $\hat{A}_0$ and $\hat{B}_0$, respectively, induced by $\iota_{-K}$, and $H$ is a certain homotopy between $\iota_B \circ D_0$ and $D_0 \circ \iota_A$. Also, it is shown in \cite[Theorem 15.1]{hendricks2022naturality} that the cobordism map $\widehat{CF}(S^3 _0 (-K),[0])\rightarrow \widehat{CF}(S^3)$ is given by the projection onto $\hat{A}_0$, composed with the inclusion map of $\hat{A}_0$ into $\hat{B}_0$.

Let $g:CFK_{\mathcal{R}}(S^3,\mathbf{unknot}) \rightarrow CFK_{\mathcal{R}}(S^3,K)$ be a local map such that $\iota_K \circ g\sim g\circ \iota_{\mathbf{unknot}}$. Following the proof of \cite[Proposition 3.15(3)]{hendricks2020surgery} shows that choosing a homotopy between $\iota_K \circ g$ and $g \circ \iota_{\mathbf{unknot}}$ induces a local map $F_g:\widehat{CF}(S^1 \times S^2) \rightarrow \widehat{CF}(S^3 _0 (-K),[0])$ satisfying $\iota_{S^3 _0 (-K)} \circ F_g \sim F_g \circ \iota_{S^1 \times S^2}$. Denote by $x_0$ the unique generator of the $\frac{1}{2}$-graded piece of $\widehat{HF}(S^1 \times S^2)$. Since projection to $A_0$ clearly homotopy-commutes with $F_g$, we see from \Cref{exmp:toymodel} that $F_g(x_0)$ is a $\iota_{S^3 _0 (-K)}$-invariant element of $\widehat{HF}(S^3 _0 (-K))$ which is mapped to the generator of $\widehat{HF}(S^3)$ under the cobordism map induced by $W$, proving the lemma.
\end{proof}

Now we can prove \Cref{thm:mainthm1}.
\begin{proof}[Proof of \Cref{thm:mainthm1}]
Given two knots $K_1$ and $K_2$, suppose that $(CFK_{\mathcal{R}}(S^3,K_1 \sharp -K_2),\iota_{K_2 \sharp -K_1})$ is $\iota_K$-locally equivalent to the trivial complex. By \Cref{lem:localmaplem}, there exists a cycle $x\in \widehat{HF}(S^3 _0 (K_2 \sharp -K_1))$ of absolute $\mathbb{Q}$-grading $\frac{1}{2}$, which is invariant under the action of $\iota_{S^3 _0 (K_2 \sharp -K_1)}$ and mapped to the unique homotopy autoequivalence $[\mathbf{id}]\in H_{\ast}(\mathbf{Hom}(\widehat{CF}(S^3),\widehat{CF}(S^3)))$ under the map $F$.

Since we have 
\[
-(S^3 \backslash K_1) \cup (S^3 \backslash K_2) \simeq S^3 _0 (K_2 \sharp -K_1),
\]
we have a pairing theorem 
\[
\widehat{CF}(S^3 _0 (K_2 \sharp -K_1))\simeq \mathbf{Mor}(\widehat{CFD}(S^3 \backslash K_1),\widehat{CFD}(S^3 \backslash K_2)).
\]
Denote by $F_x:\widehat{CFD}(S^3 \backslash K_1)\rightarrow \widehat{CFD}(S^3 \backslash K_2)$ the type-D morphism which corresponds to $x$. Then we have the following homotopy-commutative diagram for any choice of $\iota_{S^3 \backslash K_1}\in \mathbf{Inv}_D(S^3 \backslash K_1)$ and $\iota_{S^3 \backslash K_2}\in \mathbf{Inv}_D(S^3 \backslash K_2)$:
\[
\xymatrix{
\widehat{CFDA}(\mathbf{AZ})\boxtimes \widehat{CFD}(S^3 \backslash K_1) \ar[d]^{\mathbf{id}\boxtimes g}  \ar[rr]^{\iota_{S^3 \backslash K_1}} & &\widehat{CFD}(S^3\backslash K_1)\ar[d]^{g} \\
\widehat{CFDA}(\mathbf{AZ}) \boxtimes \widehat{CFD}(S^3 \backslash K_2) \ar[rr]^{\iota_{S^3 \backslash K_2}} & & \widehat{CFD}(S^3 \backslash K_2)
}
\]
Furthermore, since $F(x)$ corresponds to the identity morphism of $\widehat{CF}(S^3)$, we see that the induced map
\[
\begin{split}
    \widehat{CF}(S^3) &\simeq \widehat{CFA}(T_\infty)\boxtimes \widehat{CFD}(S^3 \backslash K_1) \\
    &\xrightarrow{\mathbf{id}\boxtimes g} \widehat{CFA}(T_\infty)\boxtimes \widehat{CFD}(S^3 \backslash K_2) \simeq \widehat{CF}(S^3)
\end{split}
\]
is homotopic the identity morphism. 

Now suppose that we have a type-D morphism $g:\widehat{CFD}(S^3 \backslash K_1)\rightarrow \widehat{CFD}(S^3 \backslash K_2)$ which satisfies the conditions of \Cref{thm:mainthm1} for some choices of $\iota_{S^3 \backslash K_1}\in \mathbf{Inv}_D(S^3 \backslash K_1)$ and $\iota_{S^3 \backslash K_2}\in \mathbf{Inv}_D(S^3 \backslash K_2)$. By taking a box tensor product with an involution $\iota_{T_\infty \backslash P}\in \mathbf{Inv}(T_\infty \backslash P)$ of the type DA bimodule $\widehat{CFDA}(T_\infty \backslash P)$ of the exterior of the connected-sum pattern $P$ induced by $-K_1$, we may replace $K_1$ with $K_1 \sharp -K_1$ and $K_2$ with $K_2 \sharp -K_1$ without any loss of generality (see the discussion below the proof for details). Then, after pairing with $\widehat{CFDA}(\X_0)$, we get the following homotopy-commutative diagram.
\[
\xymatrix{
CFK_{UV}(S^3,K_1 \sharp -K_1,z_{free})\ar[rr]^{F_{\X}(x)} \ar[d]^{\simeq} & & CFK_{UV}(S^3,K_2 \sharp -K_1,z_{free}) \ar[d]^{\simeq} \\
CFA_{UV}(\X_0) \boxtimes \widehat{CFD}(S^3\backslash (K_1 \sharp -K_1)) \ar[rr]^{\mathbf{id}\boxtimes F_x} \ar[d]^{\mathbf{conj}_{\X}\boxtimes \mathbf{id}} & & CFA_{UV}(\X_0) \boxtimes \widehat{CFD}(S^3 \backslash  (K_2 \sharp -K_1)) \ar[d]^{\mathbf{conj}_{\X} \boxtimes \mathbf{id}} \\
CFA_{UV}(\bar{\X}_0) \boxtimes \widehat{CFD}(S^3\backslash (K_1 \sharp -K_1)) \ar[rr]^{\mathbf{id}\boxtimes F_x} \ar[d]^{\phi_{\ast}\boxtimes \mathbf{id}} & & CFA_{UV}(\bar{\X}_0) \boxtimes \widehat{CFD}(S^3 \backslash  (K_2 \sharp -K_1)) \ar[d]^{\phi_{\ast} \boxtimes \mathbf{id}} \\
CFA_{UV}(\phi(\bar{\X}_0)) \boxtimes \widehat{CFD}(S^3\backslash (K_1 \sharp -K_1)) \ar[rr]^{\mathbf{id}\boxtimes F_x} \ar[d]^{\simeq} & & CFA_{UV}(\phi(\bar{\X}_0)) \boxtimes \widehat{CFD}(S^3 \backslash  (K_2 \sharp -K_1)) \ar[d]^{\simeq} \\
{\begin{array}{@{}c@{}}CFA_{UV}(\phi(\bar{\X}_0)) \boxtimes \widehat{CFDA}(\overline{\mathbf{AZ}}) \\ \boxtimes \widehat{CFDA}( \mathbf{AZ}) \boxtimes \widehat{CFD}(S^3\backslash (K_1 \sharp -K_1))\end{array}} \ar[rr]^{\mathbf{id}\boxtimes \mathbf{id}\boxtimes \mathbf{id} \boxtimes F_x} \ar[d]^{\iota_{\X} \boxtimes \iota_{S^3 \backslash (K_1 \sharp -K_1)}}& & {\begin{array}{@{}c@{}}CFA_{UV}(\phi(\bar{\X}_0)) \boxtimes \widehat{CFDA}(\overline{\mathbf{AZ}}) \\ \boxtimes \widehat{CFDA}(\mathbf{AZ}) \boxtimes \widehat{CFD}(S^3 \backslash  (K_2 \sharp -K_1))\ar[d]^{\iota_{\X} \boxtimes \iota_{S^3 \backslash  (K_2 \sharp -K_1)}}\end{array}}\\
CFA_{UV}(\X_0) \boxtimes \widehat{CFD}(S^3\backslash (K_1 \sharp -K_1)) \ar[rr]^{\mathbf{id}\boxtimes F_x} \ar[d]^{\simeq} & & CFA_{UV}(\X_0) \boxtimes \widehat{CFD}(S^3 \backslash  (K_2 \sharp -K_1)) \ar[d]^{\simeq} \\
CFK_{UV}(S^3,K_1 \sharp -K_1,z_{free}) \ar[rr]^{F_{\X}(x)}  & & CFK_{UV}(S^3,K_2 \sharp -K_1,z_{free})
}
\]
By \Cref{lem:freestabinv}, the compositions of vertical maps on the two columns of the above diagram are $\iota_{K_1 \sharp -K_1,z_{free}}$ and $\iota_{K_2 \sharp -K_1,z_{free}}$, respectively, which implies that $F_{\X}(x)\circ \iota_{K_1 \sharp -K_1,z_{free}} \sim \iota_{K_2 \sharp -K_1,z_{free}} \circ F_{\X}(x)$. Since $K_1 \sharp -K_1$ is slice, we should have a local map 
\[
G:CFK_{UV}(S^3,\mathbf{unknot},z_{free})\rightarrow CFK_{UV}(S^3,K_1 \sharp -K_1,z_{free})
\]
satisfying $G\sim \iota_{K_1\sharp -K_1,z_{free}} \circ G$. Hence, by \Cref{lem:eliminationlem}, we have an $\iota_K$-local chain map 
\[
f:\mathcal{R}\rightarrow CFK_{\mathcal{R}}(S^3,K).
\]

Now, since our argument can also be applied to $-K$ instead of $K$, we should also have an $\iota_K$-local chain map 
\[
f^{\prime}:CFK_{\mathcal{R}}(S^3,-K)\rightarrow \mathcal{R}.
\]
Therefore $[CFK_{\mathcal{R}}(S^3,K),\iota_K]$ is locally equivalent to the trivial complex $\mathcal{R}$.
\end{proof}

Now suppose that we have two bordered 3-manifolds $M$ and $N$, where $M$ has one torus boundary $\partial M$ and $N$ has two torus boundaries, $\partial _1 N$ and $\partial _2 N$. Choose any $\iota_M \in \mathbf{Inv}_D(M)$ and $\iota_N \in \mathbf{Inv}(N)$, so that we have type-D and type-DA homotopy equivalences 
\[
\begin{split}
    \iota_M &: \widehat{CFDA}(\mathbf{AZ})\boxtimes \widehat{CFD}(M) \rightarrow \widehat{CFD}(M), \\
    \iota_N &: \widehat{CFDA}(\mathbf{AZ})\boxtimes \widehat{CFDA}(N) \boxtimes \widehat{CFDA}(\overline{\mathbf{AZ}}) \rightarrow \widehat{CFDA}(N),
\end{split}
\]
where the boundary components $\partial _1 N$ and $\partial _2 N$ are considered as type-A and type-D boundaries, respectively. Recall that we have a pairing theorem for computing $\widehat{CFD}(M\cup N)$, where we identify $\partial M$ with $\partial_1 N$:
\[
\widehat{CFD}(M\cup N) \simeq \widehat{CFDA}(N)\boxtimes \widehat{CFD}(M).
\]
Then our choice of $\iota_M$ and $\iota_N$ induces a homotopy equivalence $\iota_{\iota_M,\iota_N}$ for $\widehat{CFD}(M\cup N)$ as follows:
\[
\begin{split}
\iota_{\iota_M,\iota_N}: \widehat{CFDA}(\mathbf{AZ})\boxtimes \widehat{CFD}(M\cup N) &\simeq \widehat{CFDA}(\mathbf{AZ})\boxtimes \widehat{CFDA}(N) \boxtimes \widehat{CFD}(M) \\
&{\begin{array}{@{}c@{}}\simeq \widehat{CFDA}(\mathbf{AZ})\boxtimes \widehat{CFDA}(N)\boxtimes \widehat{CFDA}(\overline{\mathbf{AZ}}) \\ \boxtimes\widehat{CFDA}(\mathbf{AZ})\boxtimes \widehat{CFD}(M)\end{array}} \\
&\xrightarrow{\iota_M \boxtimes \iota_N} \widehat{CFDA}(N)\boxtimes \widehat{CFD}(M) \simeq \widehat{CFD}(M\cup N).
\end{split}
\]
Following the proof of \cite[Theorem 5.1]{hendricks2019involutivebordered}, we immediately see that $\iota_{\iota_M,\iota_N}\in \mathbf{Inv}_D(M\cup N)$. Using this fact, we can now prove \Cref{thm:mainthm2}.
\begin{proof}[Proof of \Cref{thm:mainthm2}]
Let $K_1$ and $K_2$ be two knots satisfying the given assumptions. Then, by \Cref{thm:mainthm1}, there exists a type D morphism $$g:\widehat{CFD}(S^3\backslash K_1)\rightarrow \widehat{CFD}(S^3 \backslash K_2)$$ which fits into the following homotopy-commutative diagram for any choice of $\iota_{S^3 \backslash (K_1 \sharp -K_2)}\in \mathbf{Inv}_D(S^3 \backslash (K_1 \sharp -K_2))$.
\[
\xymatrix{
\widehat{CFDA}(\mathbf{AZ})\boxtimes \widehat{CFD}(S^3 \backslash K_1) \ar[d]^{\mathbf{id}\boxtimes g}  \ar[rr]^{\iota_{S^3 \backslash K_1}} & &\widehat{CFD}(S^3\backslash K_1)\ar[d]^{g} \\
\widehat{CFDA}(\mathbf{AZ}) \boxtimes \widehat{CFD}(S^3 \backslash K_2) \ar[rr]^{\iota_{S^3 \backslash K_2}} & & \widehat{CFD}(S^3 \backslash K_2)
}
\]
Furthermore, the induced chain map 
\[
\begin{split}
    \widehat{CF}(S^3) &\simeq \widehat{CFA}(T_\infty)\boxtimes \widehat{CFD}(S^3 \backslash K_1) \\
    &\xrightarrow{\mathbf{id}\boxtimes g} \widehat{CFA}(T_\infty)\boxtimes \widehat{CFD}(S^3 \backslash K_2) \simeq \widehat{CF}(S^3)
\end{split}
\]
is a homotopy equivalence.

Now let $N_1 =T_\infty \backslash P$ be the 0-framed exterior of the given pattern $P$ inside the $\infty$-framed solid torus. Then  the union of $N$ (glued along its 0-framed boundary) with $T_\infty$ is again $T_\infty$.
Hence, if we denote the type D morphism 
\[
\begin{split}
\widehat{CFD}(S^3 \backslash P(K_1)) &\simeq \widehat{CFDA}(N)\boxtimes \widehat{CFD}(S^3 \backslash K_1) \\
&\xrightarrow{\mathbf{id} \boxtimes g} \widehat{CFDA}(N) \boxtimes \widehat{CFD}(S^3 \backslash K_2) \\
&\simeq \widehat{CFD}(S^3 \backslash P(K_2))
\end{split}
\]
by $g_0$, then the induced map 
\[
\begin{split}
\widehat{CF}(S^3) &\simeq \widehat{CFA}(T_\infty)\boxtimes \widehat{CFD}(S^3\backslash P(K_1\sharp K_2\sharp -K_2)) \\
&\xrightarrow{\mathbf{id}\boxtimes g_0} \widehat{CFA}(T_\infty) \boxtimes \widehat{CFD}(S^3\backslash P(K_2)) \\
&\simeq \widehat{CF}(S^3)
\end{split}
\]
is homotopic to identity. Furthermore, we have a following homotopy-commutative diagram.
\[
\xymatrix{
\widehat{CFDA}(\mathbf{AZ})\boxtimes \widehat{CFD}(S^3 \backslash P(K_1))\ar[rr]^{\mathbf{id}\boxtimes g_0}\ar[d]^{\simeq} & &\widehat{CFDA}(\mathbf{AZ})\boxtimes \widehat{CFD}(S^3 \backslash P(K_2))\ar[d]^{\simeq} \\
\widehat{CFDA}(\mathbf{AZ})\boxtimes \widehat{CFDA}(N)\boxtimes \widehat{CFD}(S^3 \backslash K_1)\ar[rr]^{\mathbf{id}\boxtimes \mathbf{id}\boxtimes g}\ar[d]^{\simeq} & &\widehat{CFDA}(\mathbf{AZ})\boxtimes \widehat{CFDA}(N)\boxtimes \widehat{CFD}(S^3 \backslash K_2)\ar[d]^{\simeq} \\
{\begin{array}{@{}c@{}}\widehat{CFDA}(\mathbf{AZ})\boxtimes \widehat{CFDA}(N)\boxtimes \widehat{CFDA}(\overline{\mathbf{AZ}}) \\  \boxtimes\widehat{CFDA}(\mathbf{AZ}) \boxtimes \widehat{CFD}(S^3 \backslash K_1)\end{array}} \ar[rr]^{\mathbf{id}\boxtimes\mathbf{id}\boxtimes\mathbf{id}\boxtimes\mathbf{id}\boxtimes g}\ar[d]^{\iota_N \boxtimes \iota_{S^3\backslash K_1}} & &{\begin{array}{@{}c@{}}\widehat{CFDA}(\mathbf{AZ})\boxtimes \widehat{CFDA}(N)\boxtimes \widehat{CFDA}(\overline{\mathbf{AZ}}) \\ \boxtimes\widehat{CFDA}(\mathbf{AZ})\boxtimes \widehat{CFD}(S^3 \backslash K_2)\end{array}}\ar[d]^{\iota_N \boxtimes \iota_{S^3 \backslash K_2}}\\
\widehat{CFDA}(N)\boxtimes \widehat{CFD}(S^3 \backslash K_1)\ar[rr]^{\mathbf{id}\boxtimes g}\ar[d]^{\simeq} & &\widehat{CFDA}(N)\boxtimes \widehat{CFD}(S^3 \backslash K_2) \ar[d]^{\simeq} \\
\widehat{CFD}(S^3 \backslash P(K_1))\ar[rr]^{g_0} & &\widehat{CFD}(S^3 \backslash P(K_2))
}
\]
The compositions of vertical maps on both sides of the above diagram are $\iota_{\iota_N,\iota_{S^3 \backslash K_1}}$ and $\iota_{\iota_N,\iota_{S^3 \backslash K_2}}$, which are contained in $\mathbf{Inv}_D(S^3 \backslash P(K_1))$ and $\mathbf{Inv}_D(S^3 \backslash P(K_2))$, respectively. Also, since our assumption is symmetric on the choices of $K_1$ and $K_2$, we can repeat our argument with $K_1$ and $K_2$ swapped. Hence, by \Cref{thm:mainthm1}, we deduce that $(CFK_{\mathcal{R}}(S^3,P(K_1)\sharp -P(K_2)),\iota_{P(K_1) \sharp -P(K_2)})$ is $\iota_K$-locally equivalent to the trivial complex.
\end{proof}

\section{An explicit formula for the hat-flavored truncation of $\iota_K$}
\label{sec:formula}

Recall that we had the bordered Heegaard diagram $\X$; write $\X=(\Sigma,\boldsymbol\alpha,\boldsymbol\beta,\{z,z_{free}\},w)$. We can add one more free basepoint $w_{free}$ to the component of $\Sigma\backslash \left( \cup_{c\in \alpha\cup\beta} c\right)$ containing $z_{free}$ to get a new diagram $\Y=(\Sigma,\boldsymbol\alpha,\boldsymbol\beta,\{z,z_{free}\},\{w,w_{free}\})$. As we modified $\X$ by Heegaard moves to get a nice diagram $\X_0$, we can do the same process to $\Y$ to get a nice diagram $\Y$. By counting holomorphic disks on $\Y_0$ which does not algebraically intersect $z_{free}$ and $w_{free}$, and recording their algebraic intersection numbers with $z$ and $w$ by formal variables $U$ and $V$, respectively, we can get a well-defined type-A module $CFA_{UV}(\Y_0)$. Note that, by construction, we have 
\[
CFA_{UV}(\X_0) \simeq CFA_{UV}(\Y_0).
\]

Recall that the proof of the pairing theorem 
\[
\widehat{CF}(-M_1 \cup M_2 ) \simeq \mathbf{Mor}(\widehat{CFD}(M_1),\widehat{CFD}(M_2))
\]
relies on the observation that $-M_1 \cup \overline{\mathbf{AZ}} \simeq -M_1$. Denote by $\Y_\infty$ the 4-pointed nice bordered diagram obtained by gluing $\Y_0$ with a cylinder whose boundaries have framing $0$ and $\infty$. Since $\Y_\infty$ should also satisfy $\widehat{CFD}(-\Y_\infty \cup \overline{\mathbf{AZ}}) \simeq \widehat{CFD}(-\Y_\infty)$ and the type-D and type-A modules associated to $\Y_0$ and $\X_0$ are homotopy equivalent, we see that 
\[
\widehat{CFL}(S^2 \times S^1,L_2) \simeq \widehat{CFA}(-\Y_\infty) \boxtimes \widehat{CFD}(S^3 \backslash K) \simeq  \mathbf{Mor}(\widehat{CFD}(\Y_\infty),\widehat{CFD}(T_\infty,\nu)),
\]
where $\nu$ denotes the longitudinal knot inside the $\infty$-framed solid torus $T_\infty$ and $L_2$ denotes the 2-component link ${p,q}\times S^1$ in $S^2 \times S^1$ for two points $p,q\in S^2$. 

Here, $L_2$ is endowed with an orientation so that its total homology class $[L_2]\in H_1 (S^2 \times S^1;\mathbb{Z})$ vanishes. Hence $L_2$ is nullhomologous, which tells us that its link Floer homology (at the unique spin structure of $S^2 \times S^1$ has well-defined $\mathbb{Z}$-valued Maslov and (collapsed) Alexander gradings. These gradings should be compatible with the natural gradings of $\widehat{CFD}(\Y_\infty)$ and $\widehat{CFD}(T_\infty,\nu)$; note that the grading on $\widehat{CFD}(\Y_\infty)$ can be defined as in \Cref{eqn:grading}.

\begin{lem}
\label{lem:fourgen}
$\widehat{HFL}(S^2 \times S^1,L_2)$ is generated by four elements, which are supported on the unique spin structure of $S^2 \times S^1$ and lie on bidegrees $(0,0),(0,0),(1,1),(-1,-1)$, respectively. 
\end{lem}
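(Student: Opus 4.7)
The plan is to compute $\widehat{HFL}(S^2 \times S^1, L_2)$ via the pairing isomorphism established immediately above the lemma,
\[
\widehat{CFL}(S^2\times S^1, L_2) \simeq \mathbf{Mor}\bigl(\widehat{CFD}(\Y_\infty),\, \widehat{CFD}(T_\infty, \nu)\bigr),
\]
rather than by drawing a multi-pointed Heegaard diagram for $(S^2\times S^1, L_2)$ from scratch. The key geometric fact I would exploit is that $S^2\times S^1 \setminus L_2 \cong T^2\times I$, since removing the two $S^1$-fibers $\{p,q\}\times S^1$ from $S^2\times S^1$ yields $(S^2\setminus\{p,q\})\times S^1 \cong T^2\times \mathbb{R}$. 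Thus, bordered-wise, the pair $(S^2\times S^1, L_2)$ is glued from two copies of the longitudinally knotted $\infty$-framed solid torus via the identity type-DA bimodule of $T^2\times I$.

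First I would write down $\widehat{CFD}(T_\infty,\nu)$ explicitly: it is the standard two-idempotent ``longitude'' module with a single pair of Reeb-labelled arrows between its $\iota_0$- and $\iota_1$-generators. Next I would extract $\widehat{CFD}(\Y_\infty)$ from a nice model of $\Y_0$ glued with the framing-changing cylinder; up to the grading refinement coming from \Cref{eqn:grading} and the two free basepoints $\{z_{free}, w_{free}\}$, it is itself homotopy equivalent to $\widehat{CFD}(T_\infty,\nu)$. Enumerating idempotent-compatible type-D morphisms between two such small modules then produces exactly four generators, and I would check that the induced differential on the morphism complex vanishes.

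The remaining task is to pin down the four bidegrees via the $(G(T^2)\times\mathbb{Z})$-set refinement of \Cref{eqn:grading}. Two ``diagonal'' morphisms, which intertwine the two idempotents identically, should land in bidegree $(0,0)$. The other two are exchanged by the Alexander symmetry $(M,A)\mapsto(M-2A,-A)$ coming from conjugation invariance of the nullhomologous link $L_2$, so they must occupy symmetric bidegrees; an Alexander-grading computation via $n_z - n_w$ on a generating periodic domain places them in $(1,1)$ and $(-1,-1)$. The absolute Maslov normalization is then fixed by comparison with $\widehat{HF}(S^2\times S^1)$ concentrated at gradings $\pm \tfrac{1}{2}$ after forgetting one basepoint pair; the unique Spin$^c$ support of $S^2 \times S^1$ then propagates to $\widehat{HFL}$ since the $\mathbb{Z}$-part of the refinement of \Cref{eqn:grading} does not distinguish torsion classes.

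The main obstacle I expect is the grading bookkeeping: in particular, verifying that the $\lambda^{-2n_{z_{free}}(B)}$ twist in \Cref{eqn:grading} does not shift the four morphism generators out of the claimed bidegrees, and ruling out any spurious higher $A_\infty$ differential in the morphism complex coming from the $A_\infty$-structure on $\widehat{CFA}(-\Y_\infty)$. Once those are handled, the lemma reduces to a finite enumeration.
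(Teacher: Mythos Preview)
Your route via the $\mathbf{Mor}$ pairing is genuinely different from the paper's: the paper simply draws an explicit four-pointed genus-one Heegaard diagram for $(S^2\times S^1,L_2)$, checks that it is nice, lists the four intersection generators $xc,xd,yc,yd$, writes down the $\mathbb{F}_2[U,V]$-differential by enumerating bigons and squares avoiding $z_2,w_2$, and reads the bigradings off directly from the $U,V$ weights. No bordered modules enter the argument at all.

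Your approach is valid in principle, since the pairing isomorphism is indeed established just above the lemma, but the concrete module descriptions you rely on are wrong. The paper states explicitly (in the paragraph preceding \Cref{eqn:diff}) that $\widehat{CFD}(T_\infty,\nu)$ has a \emph{single} generator with trivial differential, not two idempotent generators connected by Reeb arrows. More damagingly, your claim that $\widehat{CFD}(\Y_\infty)\simeq\widehat{CFD}(T_\infty,\nu)$ forces the wrong answer: if both were one-generator modules in an idempotent $\iota$ with trivial structure maps, then the $\mathbf{Mor}$ complex between them would be $\iota\,\mathcal{A}(T^2)\,\iota$, which is two-dimensional over $\mathbb{F}_2$, not four. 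Hence $\widehat{CFD}(\Y_\infty)$ must be strictly larger, and you have not computed it. Identifying that module is exactly where the real work in your approach hides, and carrying it out amounts to a Heegaard-diagram calculation of comparable difficulty to the paper's direct one. The grading bookkeeping you flag as the main obstacle cannot even begin until the correct $\mathbf{Mor}$ complex is in hand.
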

\begin{proof}
Write $L_2 = A\cup B$ and choose $\mathbf{z}$-basepoint $z_1,z_2$ and $\mathbf{w}$-basepoints $w_1,w_2$ on $L_2$ so that $z_1,w_1\in A$ and $z_2,w_2\in B$. We will compute the link Floer homology $CFL_{UV}(S^2 \times S^1,L_2)$ of the basepointed link $(L_2,\{z_1,z_2\},\{w_1,w_2\})$, where the differential records the algebraic intersections of holomorphic disks with the basepoints $z_1,w_1,z_2,w_2$ by $U,V,0,0$, respectively. Note that truncating it by $U=V=0$ and taking homology gives $\widehat{HFL}(S^2 \times S^1,L_2)$.

Consider the Heegaard diagram in \Cref{fig:heegaarddiagram}. Since we are counting disks which does not intersect $z_2$ and $w_2$ algebraically, the given diagram is nice, so all relevant holomorphic disks are represented by either bigons or squares which do not contain $z_2$ and $w_2$. Thus we see that $CFL_{UV}(S^2 \times S^1,L_2)$ is generated by the intersection points $xc,xd,yc,yd$, and the differential is given by 
\[
\begin{split}
    \partial(xd) &= Uxc+Vyd, \\
    \partial(xc) &= V(xd+yc), \\
    \partial(yd) &= U(xd+yc), \\
    \partial(xd+yc) &= 0.
\end{split}
\]
Since $U$ and $V$ act on the bigrading by $(-2,-1)$ and $(0,1)$, and the differential $\partial$ lowers the Maslov grading by $1$ and leaves the collapsed Alexander grading invariant, we see that $xd$ and $xd+yc$ have bidegree $(0,0)$, $xc$ has bidegree $(1,1)$, and $yd$ has bidegree $(-1,-1)$. Therefore, after truncating by $U=V=0$, we get four generators $xd,xc,yd,xd+yc$ of $\widehat{HFL}(S^2 \times S^1,L_2)$, which lie on bidegrees $(0,0),(0,0),(1,1),(-1,-1)$, respectively, as desired.
\end{proof}

\begin{figure}[hbt]
    \centering
    \includegraphics[width=0.4\textwidth]{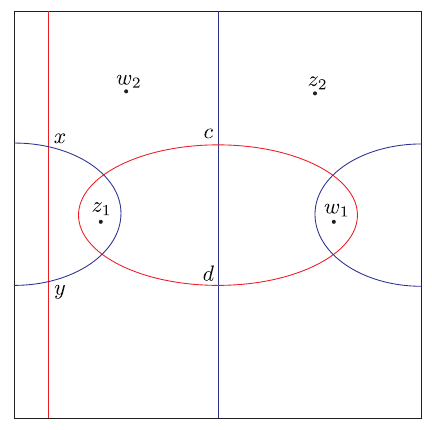}
    \caption{A 4-pointed Heegaard diagram representing the 2-component link $L_2$.}
\label{fig:heegaarddiagram}
\end{figure}

We define a type-D morphism
\[
G_\infty:\widehat{CFD}(\Y_\infty)\rightarrow \widehat{CFD}(T_\infty,\nu)
\]
as follows. We start with a Heegaard diagram $\Y_0$. If we denote by $H_0=(\Sigma,\boldsymbol\alpha,\boldsymbol\beta,z_{free},w)$ the doubly-pointed Heegaard diagram for the pair $(T_\infty,\nu)$ and the diagram we get by quasi-stabilizing it as $H^{qst}_0$, then we have a 2-handle map 
\[
\widehat{CFD}(\Y_0)\rightarrow \widehat{CFD}(H^{qst}_0).
\]
Furthermore, the proof of \cite[Proposition 5.3]{zemke2017quasistabilization} tells us that we can define the ``quasi-destabilization map''
\[
\widehat{CFD}(H^{qst}_0)\rightarrow \widehat{CFD}(T_\infty,\nu).
\]
We define $G_\infty$ as the composition of the above two maps.

\begin{figure}[hbt]
    \centering
    \includegraphics[width=0.8\textwidth]{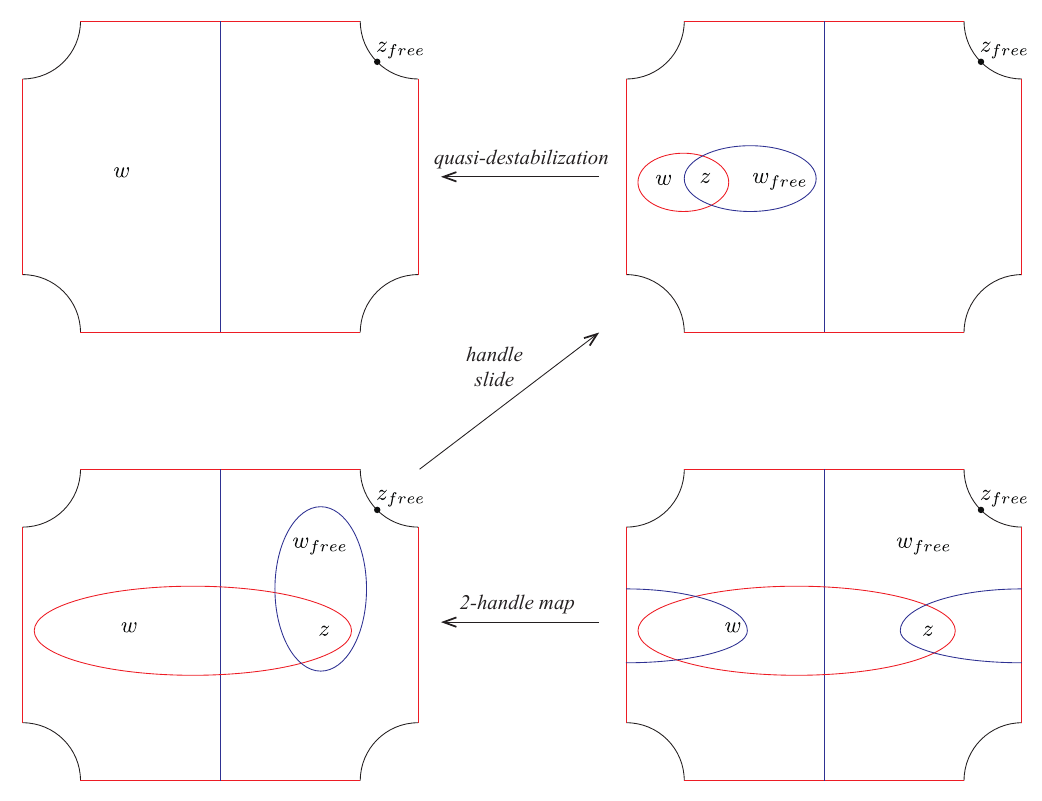}
    \caption{Upper left, the diagram $H_0$. Upper right, the diagram $H^{qst}_0$. Lower left, a result of performing a handleslide to $H^{qst}_0$. Lower right, the diagram $\Y$.}
\label{fig:mapQ}
\end{figure}

Then, for any knot $K$, the induced map 
\[
\begin{split}
    \widehat{CFL}(S^3,K\cup \mathbf{unknot}) &\simeq \widehat{CFA}(S^3 \backslash K) \boxtimes \widehat{CFD}(\Y_\infty) \\
    &\xrightarrow{\mathbf{id}\boxtimes G_\infty} \widehat{CFA}(S^3 \backslash K) \boxtimes \widehat{CFD}(T_\infty,\nu) \simeq \widehat{CFK}(S^3,K)
\end{split}
\]
is homotopic to the cobordism map $F_K$ induced by the trivial saddle cobordism from $K\cup \mathbf{unknot}$ to $K$, as drawn in \Cref{fig:pairofpantscob}. Furthermore, we can also define type-D endomorphisms 
\[
\Phi^D_{\Y}, \Psi^D_{\Y} :\widehat{CFD}(\Y_\infty)\rightarrow \widehat{CFD}(\Y_\infty)
\]
using quasi-destabilization maps and (similarly defined) quasi-stabilization maps, as follows. Given a bordered diagram $H_Y=(\Sigma,\boldsymbol\alpha,\boldsymbol\beta,\{z,z_{free}\},\{w,w_{free}\})$ representing $\Y_\infty$, we ($\alpha$-)quasi-stabilize it near the point $z$ to get a new diagram $H^{qst}_Y$, which introduces a new pair $(z^{\prime},w^{\prime})$ of basepoints, and then we quasi-destabilize it to eliminate the basepoints $z,w$ and rename $z^{\prime},w^{\prime}$ as $z,w$, respectively, to obtain $H_Y$ again. We define the resulting map as $\Psi_Y$, i.e.
\[
\begin{split}
    \Psi_Y:\widehat{CFD}(\Y_\infty) &= \widehat{CFD}(H_Y) \\
    &\xrightarrow{\text{quasi-stabilization}} \widehat{CFD}(H^{qst}_Y) \\
    &\xrightarrow{\text{quasi-destabilization}} \widehat{CFD}(H_Y) = \widehat{CFA}(\Y_0).
\end{split}
\]
We omit the construction of $\Phi^D_{\Y}$, since it is similar to the construction of $\Psi^D_{\Y}$. The definition of $\Phi^D_{\Y}$ and $\Psi^D_{\Y}$ are not natural, i.e. depends on the choices of auxiliary data. However, by the pairing theorem for triangles, we know that the map 
\[
\begin{split}
    \widehat{CFL}(S^3,K\cup\mathbf{unknot}) &\simeq \widehat{CFA}(S^3 \backslash K) \boxtimes \widehat{CFD}(\Y_\infty) \\
    &\xrightarrow{\mathbf{id}\boxtimes \Phi_Y } \widehat{CFA}(S^3 \backslash K) \boxtimes \widehat{CFD}(\Y_\infty) \simeq \widehat{CFL}(S^3,K\cup\mathbf{unknot}).
\end{split}
\]
is homotopic to the basepoint action $\Phi_{K\cup\mathbf{unknot},K}$ corresponding to the basepoint $z$ on the link $K\cup\mathbf{unknot}$, for any knot $K$. A similar statement also holds for $\Psi^D_{\Y}$ as well.

\begin{figure}[hbt]
    \centering
    \includegraphics[width=0.4\textwidth]{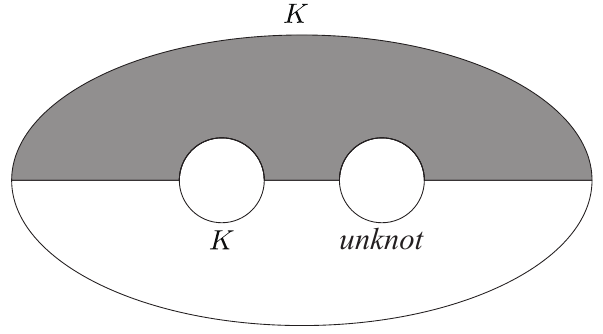}
    \caption{A decoration on the trivial saddle cobordism from $K \cup \mathbf{unknot}$ to $K$. Note that this cobordism can be seen as the composition of a quasi-stabilization followed by a saddle move.}
\label{fig:pairofpantscob}
\end{figure}

\begin{lem}
\label{lem:bypass}
For any knot $K$, we have $F_K \circ \Phi_{K\cup\mathbf{unknot},K}\sim \Phi_K \circ F_K$ and $F_K \circ \Psi_{K\cup\mathbf{unknot},K} \sim \Psi_K \circ F_K$.
\end{lem}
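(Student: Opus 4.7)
The plan is to reduce both identities to Zemke's naturality theorems for basepoint actions under decorated link cobordism maps; by the symmetry of the roles of $(U,z,\Phi)$ and $(V,w,\Psi)$, it suffices to treat $\Phi$. As indicated in the caption of \Cref{fig:pairofpantscob}, the decorated saddle cobordism from $K\cup\mathbf{unknot}$ to $K$ factors as a quasi-stabilization cobordism (adding a $(z',w')$ pair on the unknot component, positioned adjacent to the band) followed by an elementary saddle move (attaching the band and deleting the unknot along with the extra basepoints). At the level of cobordism maps this gives
\[
F_K \;\sim\; F_s \circ F_{qst},
\]
where $F_s$ and $F_{qst}$ are the maps associated to the two pieces.

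It then suffices to check that the basepoint action $\Phi_{K\cup\mathbf{unknot},K}$ on the $z$-basepoint of $K$ commutes up to homotopy with each of $F_{qst}$ and $F_s$. For $F_{qst}$, the newly created basepoints lie on the unknot component, disjoint from $z$, and the commutation $\Phi\circ F_{qst}\sim F_{qst}\circ\Phi$ is one of the standard compatibility relations from \cite{zemke2017quasistabilization}; concretely, one verifies it by inspecting the quasi-stabilized Heegaard diagram and noting that the count of disks with $n_z=1$ is unaffected by the free stabilization region away from $z$. For $F_s$, we interpret $\Phi$ as a dot decoration on the $K$-strand of the cobordism surface, which can be isotoped past the band since the band is disjoint from that strand; Zemke's relations among elementary decorated link cobordism maps then yield $\Phi\circ F_s \sim F_s \circ \Phi$. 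Composing these two homotopies produces the desired identity, and the parallel argument with $\Psi$ in place of $\Phi$ handles the $w$-basepoint.

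The main obstacle is the identification of $F_K$ as defined here — namely, as the closed-manifold map induced after pairing with $\widehat{CFA}(S^3\backslash K)$ by the bordered composition $G_\infty = (\text{quasi-destab})\circ(\text{2-handle})$ on $\Y_\infty$ — with the decomposed composite $F_s\circ F_{qst}$ of decorated link cobordism maps. Bridging the two pictures requires tracking the basepoints $z,z_{free},w,w_{free}$ across the sequence of bordered Heegaard diagrams relating $\Y_\infty$, the intermediate quasi-stabilized diagram, and the doubly-pointed diagram for $(T_\infty,\nu)$, and verifying that the 2-handle triangle region for $G_\infty$ corresponds to the saddle band after pairing with $\widehat{CFA}(S^3\backslash K)$. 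Once this dictionary is established — essentially by comparing \Cref{fig:mapQ} with the model Heegaard diagrams used to define saddle cobordism maps in Zemke's framework — the two commutativity statements above combine to give $F_K\circ\Phi_{K\cup\mathbf{unknot},K}\sim \Phi_K\circ F_K$, and the analogous chain of homotopies gives the identity for $\Psi$.
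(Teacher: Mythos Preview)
Your approach differs from the paper's, and there is a genuine gap in the saddle step. The paper does not argue direct commutativity of $\Phi$ (or $\Psi$) with the saddle map. Instead it applies Zemke's \emph{bypass relation} (\cite[Lemma~1.4]{zemke2019connected}) to the decorated pair-of-pants, obtaining the three-term identity
\[
\Psi_K \circ F_K \;\sim\; F_K \circ \Psi_{K\cup\mathbf{unknot},K} \;+\; F_K \circ \Psi_{K\cup\mathbf{unknot},\mathbf{unknot}},
\]
and then observes that the extra term vanishes because the basepoint action on the unknot component is null-homotopic. The $\Phi$ case is identical.

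Your justification for $\Phi\circ F_s\sim F_s\circ\Phi$ (``the band is disjoint from that strand'') does not hold. The trivial saddle merges the unknot into $K$, so the band is attached to the $K$-strand; on the decorated pair-of-pants the dot representing the basepoint action cannot be isotoped from the incoming $K$-side to the outgoing $K$-side while staying in the same $\mathbf{z}$/$\mathbf{w}$ subregion without crossing a dividing arc. Sliding it across is exactly what the bypass relation governs, and it produces the correction term $F_K\circ\Psi_{K\cup\mathbf{unknot},\mathbf{unknot}}$ (respectively $F_K\circ\Phi_{K\cup\mathbf{unknot},\mathbf{unknot}}$) rather than zero. Only after invoking that this basepoint action on the unknot is trivial do you recover the desired commutativity. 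So the missing ingredient in your argument is precisely the bypass relation and the triviality of the unknot's basepoint actions; your factorization $F_K\sim F_s\circ F_{qst}$ and the quasi-stabilization commutation are fine, but they do not by themselves close the argument.
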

\begin{proof}
Bypass relation \cite[Lemma 1.4]{zemke2019connected}, applied as shown in \Cref{fig:bypass}, gives the equality 
\[
\Psi_K \circ F_K \sim F_K \circ \Psi_{K\cup\mathbf{unknot},K} + F_K \circ \Psi_{K\cup\mathbf{unknot},\mathbf{unknot}},
\]
where $\Psi_{K\cup\mathbf{unknot},\mathbf{unknot}}$ denotes the basepoint action associated to $w_{free} \in \mathbf{unknot}$. Since the basepoint actions for the unknot are trivial, the lemma follows. The same argument also proves the commutation result for $\Phi$ actions.
\end{proof}

\begin{figure}[hbt]
    \centering
    \includegraphics[width=\textwidth]{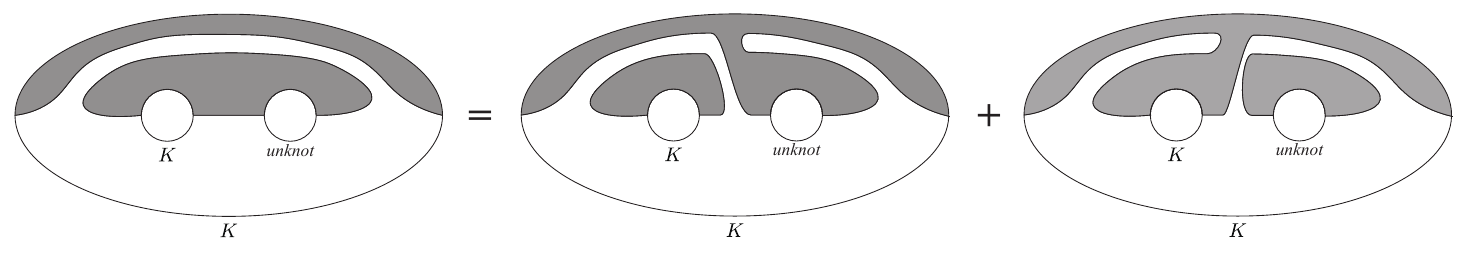}
    \caption{A Bypass relation applied to the saddle cobordism from $K\cup \mathbf{unknot}$ to $K$, with a decoration as shown in \Cref{fig:pairofpantscob}.}
\label{fig:bypass}
\end{figure}

\begin{lem}
\label{lem:basis}
The type-D morphisms $G_\infty$, $G_\infty\circ \Phi^D_{\Y}$, $G_\infty\circ \Psi^D_{\Y}$, and $G_\infty\circ (1+\Phi^D_{\Y}\Psi^D_{\Y})$ form a basis of $H_{\ast}(\mathbf{Mor}(\widehat{CFD}(\Y_\infty),\widehat{CFD}(T_\infty,\nu)))$. Furthermore, they lie on bidegrees $(0,0),(1,1),(-1,-1),(0,0)$, respectively.
\end{lem}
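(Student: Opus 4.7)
The plan is to identify the four morphisms with a basis of $\widehat{HFL}(S^2 \times S^1, L_2)$ via the pairing isomorphism
\[
H_*(\mathbf{Mor}(\widehat{CFD}(\Y_\infty), \widehat{CFD}(T_\infty, \nu))) \simeq \widehat{HFL}(S^2 \times S^1, L_2)
\]
established earlier in this section. Since \Cref{lem:fourgen} already exhibits four generators in bidegrees $(0,0), (0,0), (1,1), (-1,-1)$, it suffices to (i) verify that the four morphisms carry the claimed bidegrees and (ii) show they are linearly independent cycles.

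For (i), the map $G_\infty$ is a composition of a 2-handle cobordism map and a quasi-destabilization map, both of which preserve the bigrading induced by \Cref{eqn:grading}, so $[G_\infty]$ lies in bidegree $(0,0)$. The endomorphisms $\Phi^D_{\Y}$ and $\Psi^D_{\Y}$ encode the link-basepoint actions at $z$ and $w$ respectively, and by Zemke's analysis of quasi-(de)stabilization they carry bidegrees $(1,1)$ and $(-1,-1)$, consistent with the relation $\iota_K^2 \sim 1 + \Phi\Psi$ forcing $\Phi\Psi$ to preserve bidegree. Composition with the bidegree-zero map $G_\infty$ preserves bidegree, so the four morphisms sit in the claimed bidegrees.

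For (ii), since the $(1,1)$- and $(-1,-1)$-pieces of $\widehat{HFL}(S^2 \times S^1, L_2)$ are one-dimensional by \Cref{lem:fourgen}, I only need to show that $[G_\infty \circ \Phi^D_{\Y}]$ and $[G_\infty \circ \Psi^D_{\Y}]$ are nonzero, and that $[G_\infty]$ and $[G_\infty \circ (1+\Phi^D_{\Y}\Psi^D_{\Y})]$ are independent in the two-dimensional $(0,0)$-piece. The approach is to pair with $\widehat{CFA}(T_0) \simeq \widehat{CFA}(S^3 \backslash \mathbf{unknot})$: the paired map $\mathbf{id} \boxtimes G_\infty$ becomes the saddle cobordism map $F_{\mathbf{unknot}}\colon \widehat{CFL}(S^3, \mathbf{unknot} \cup \mathbf{unknot}) \to \widehat{CFK}(S^3, \mathbf{unknot})$, which is surjective and hence nonzero, while $\mathbf{id} \boxtimes \Phi^D_{\Y}$ and $\mathbf{id} \boxtimes \Psi^D_{\Y}$ correspond via \Cref{lem:bypass} to genuine link-basepoint actions on $\widehat{CFL}(S^3, \mathbf{unknot} \cup \mathbf{unknot})$. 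A direct check on a two-bridge diagram of the two-component unlink then establishes that both compositions remain nonzero.

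The main obstacle is the independence of the two $(0,0)$-classes, which reduces to showing $[G_\infty \circ \Phi^D_{\Y}\Psi^D_{\Y}] \neq 0$. Using the explicit chain complex $CFL_{UV}(S^2 \times S^1, L_2)$ from the proof of \Cref{lem:fourgen} with differentials $\partial(xd) = Uxc + Vyd$, $\partial(xc) = V(xd+yc)$, and $\partial(yd) = U(xd+yc)$, a direct computation of the basepoint actions yields $\Phi(xd) = xc$, $\Psi(xd) = yd$, $\Phi\Psi(xd) = xd+yc$, while $\Phi(xd+yc) = \Psi(xd+yc) = 0$. Thus, once $[G_\infty \circ \Phi^D_{\Y}] \neq 0$ is established as above, $[G_\infty]$ cannot be the annihilator class $xd+yc$, so $\Phi\Psi$ acts nontrivially on $[G_\infty]$; hence $[G_\infty \circ (1+\Phi^D_{\Y}\Psi^D_{\Y})]$ is linearly independent from $[G_\infty]$, completing the basis argument.
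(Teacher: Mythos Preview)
Your reduction to \Cref{lem:fourgen} is the same as the paper's, but the execution of step~(ii) has a genuine gap. You propose to detect nonvanishing of $[G_\infty\circ\Phi^D_{\Y}]$ and $[G_\infty\circ\Psi^D_{\Y}]$ by pairing with $\widehat{CFA}(S^3\backslash\mathbf{unknot})$. But once you pair, \Cref{lem:bypass} gives
\[
F_{\mathbf{unknot}}\circ\Phi_{\mathbf{unknot}\cup\mathbf{unknot},\mathbf{unknot}}\ \sim\ \Phi_{\mathbf{unknot}}\circ F_{\mathbf{unknot}},
\]
and $\Phi_{\mathbf{unknot}}$, $\Psi_{\mathbf{unknot}}$ are both nullhomotopic on $\widehat{CFK}(S^3,\mathbf{unknot})\simeq\mathbb{F}_2$. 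So the ``direct check on a two-bridge diagram of the two-component unlink'' cannot possibly show the compositions are nonzero: after pairing with the unknot they \emph{are} nullhomotopic. Your later argument for the $(0,0)$-piece explicitly relies on $[G_\infty\circ\Phi^D_{\Y}]\ne 0$ to rule out $[G_\infty]=xd+yc$, so the gap propagates and the independence of the two $(0,0)$-classes is also not established.

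This is exactly why the paper pairs with the figure-eight knot rather than the unknot: for the figure-eight, the endomorphisms $\mathbf{id}$, $\Phi_K$, $\Psi_K$, $1+\Phi_K\Psi_K$ of $\widehat{CFK}(S^3,K)$ are all linearly independent, and $F_K$ has a homotopy right inverse, so linear dependence among the four type-D morphisms would force linear dependence among those four endomorphisms, a contradiction. To repair your argument, replace the unknot by any knot $K$ for which $\Phi_K$, $\Psi_K$, and $\Phi_K\Psi_K$ are all nontrivial and independent on $\widehat{CFK}$; the figure-eight is the simplest such choice. Your direct computation of $\Phi$, $\Psi$ on $CFL_{UV}(S^2\times S^1,L_2)$ is appealing, but you would additionally need to argue that precomposition by $\Phi^D_{\Y}$ on the $\mathbf{Mor}$ side agrees with the basepoint action on the $\widehat{CFL}$ side under the pairing isomorphism, and then independently identify which homology class $G_\infty$ corresponds to; the paper sidesteps both issues by the figure-eight test.
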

\begin{proof}
By \Cref{lem:fourgen}, we only have to show that the homotopy classes given type-D morphisms are linearly independent, so assume that they are linearly dependent. Then for any knot $K$, the endomorphisms $F_K,F_K\circ \Phi_{K\cup\mathbf{unknot},K},F_K\circ \Psi_{K\cup\mathbf{unknot},K}$, and $F_K\circ (1+\Phi_{K\cup\mathbf{unknot},K}\Psi_{K\cup\mathbf{unknot},K})$ should be linearly dependent up to homotopy. By \Cref{lem:bypass} and the fact that $F_K$ has a homotopy right inverse (which follows from the fact that the trivial saddle cobordism from $K\cup\mathbf{unknot}$ to $K$ has a right inverse) would imply that the endomorphisms 
\[
\mathbf{id},\Phi_K,\Psi_K,1+\Phi_K\Psi_K
\]
of $\widehat{CFK}(S^3,K)$ should also be linearly dependent up to homotopy.

Now consider the case when $K$ is the figure-eight knot. Then $\widehat{CFK}(S^3,K)$ is generated by five elements, say $a,b,c,d,x$. The basepoint actions are given by $\Phi_K(a)=b$, $\Phi_K(c)=d$, $\Psi_K(a)=c$, $\Psi_K(b)=d$, and all other generators are mapped to zero. Thus we see that the endomorphisms $\mathbf{id},\Phi_K,\Psi_K,1+\Phi_K\Psi_K$ are linearly independent up to homotopy, a contradiction.
\end{proof}

Recall that mimicking the construction of $\iota_\X$ gives a bordered involution 
\[
\iota_\Y:\widehat{CFDA}(\mathbf{AZ})\boxtimes \widehat{CFD}(\Y_\infty)\rightarrow \widehat{CFD}(\Y_\infty),
\]
which is a homotopy equivalence which satisfies the property that the induced map 
\[
\begin{split}
    \widehat{CFL}(S^3,K\cup\mathbf{unknot}) &\simeq \widehat{CFA}(S^3 \backslash K)\boxtimes \widehat{CFD}(\Y_\infty) \\
    &\simeq \widehat{CFA}(S^3 \backslash K)\boxtimes \widehat{CFDA}(\overline{\mathbf{AZ}})\boxtimes \widehat{CFD}(\mathbf{AZ}) \boxtimes \widehat{CFD}(\Y_\infty) \\
    &\xrightarrow{\iota_{S^3 \backslash K}\boxtimes \iota_{\Y}} \widehat{CFA}(S^3 \backslash K)\boxtimes \widehat{CFD}(\Y_\infty) \simeq \widehat{CFL}(S^3,K\cup\mathbf{unknot})
\end{split}
\]
is homotopic to the involution $\iota_{K\cup\mathbf{unknot}}$ of the link Floer homology of $K\cup\mathbf{unknot}$. 

On the other hand, the type-D module $\widehat{CFD}(T_\infty,\nu)$ is generated by a single element, say $x$, and the differential is trivial. This implies that $\widehat{CFDA}(\mathbf{AZ})\boxtimes \widehat{CFD}(T_\infty,\nu)$ is not homotopy equivalent to $\widehat{CFD}(T_\infty,\nu)$. In fact, $\widehat{CFDA}(\mathbf{AZ})\boxtimes \widehat{CFD}(T_\infty,\nu)$ is homotopy equivalent to a type-D module generated by five elements, say $a,b,c,d,e$, where the differential is given by 
\begin{equation}
\label{eqn:diff}
\partial a=0,\,\partial b=\rho_1 a+\rho_3 c,\,\partial c=\rho_2 d,\,\partial d=\rho_1 e,\,\partial e=0.    
\end{equation}
Since $a$ is a cycle, the map 
\[
f:\widehat{CFD}(T_\infty,\nu)\rightarrow \widehat{CFDA}(\mathbf{AZ})\boxtimes \widehat{CFDA}(T_\infty,\nu)
\]
defined by $f(x)=a$ commutes with the differential on both sides, and thus is a well-defined type-D morphism.

\begin{lem}
\label{lem:morphismf}
The type-D morphism $f\circ G_\infty \circ \iota_{\Y}$ is homotopic to either $\mathbf{id}\boxtimes G_\infty$ or $\mathbf{id}\boxtimes (G_\infty \circ (1+\Phi^D_{\Y} \Psi^D_{\Y}))$.
\end{lem}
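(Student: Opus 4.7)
The plan is to combine a bidegree count with Lemma \ref{lem:basis} to narrow down the possible homotopy classes of $f \circ G_\infty \circ \iota_\Y$ to a four-element set, and then to rule out two of them by pairing with the type-A module of the $0$-framed unknot complement.

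First, I would check that $f \circ G_\infty \circ \iota_\Y$ has bidegree $(0,0)$: the bordered involution $\iota_\Y$ preserves the grading by construction; the morphism $G_\infty$ is a composition of a $2$-handle map and a quasi-destabilization map, both of which are grading-preserving; and $f$ sends the $(0,0)$-bigraded generator $x$ of $\widehat{CFD}(T_\infty,\nu)$ to the $(0,0)$-bigraded cycle $a$ in $\widehat{CFDA}(\mathbf{AZ})\boxtimes \widehat{CFD}(T_\infty,\nu)$.

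Next, using that $\widehat{CFDA}(\mathbf{AZ})$ is homotopy-invertible with inverse $\widehat{CFDA}(\overline{\mathbf{AZ}})$ (because $\overline{\mathbf{AZ}}\cup \mathbf{AZ}$ represents the trivial cylinder $T^2\times I$), tensoring with $\widehat{CFDA}(\overline{\mathbf{AZ}})$ gives a homotopy equivalence
\[
\mathbf{Mor}(\widehat{CFDA}(\mathbf{AZ})\boxtimes \widehat{CFD}(\Y_\infty), \widehat{CFDA}(\mathbf{AZ})\boxtimes \widehat{CFD}(T_\infty,\nu)) \simeq \mathbf{Mor}(\widehat{CFD}(\Y_\infty), \widehat{CFD}(T_\infty,\nu)).
\]
By Lemma \ref{lem:basis}, the homology of the right-hand side is four-dimensional with its bidegree-$(0,0)$ piece two-dimensional over $\mathbb{F}_2$ and spanned by $G_\infty$ and $G_\infty \circ (1 + \Phi^D_\Y \Psi^D_\Y)$. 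Hence $f \circ G_\infty \circ \iota_\Y$ is homotopic to one of $0$, $\mathbf{id}\boxtimes G_\infty$, $\mathbf{id}\boxtimes (G_\infty \circ (1 + \Phi^D_\Y \Psi^D_\Y))$, or their sum $\mathbf{id}\boxtimes (G_\infty \circ \Phi^D_\Y \Psi^D_\Y)$.

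To rule out the zero morphism and the sum, I would pair with $\widehat{CFA}(T_0)$. Under the pairings $\widehat{CFA}(T_0)\boxtimes \widehat{CFD}(\Y_\infty) \simeq \widehat{CFL}(S^3,\mathbf{unknot}\cup \mathbf{unknot})$ and $\widehat{CFA}(T_0)\boxtimes \widehat{CFD}(T_\infty,\nu) \simeq \widehat{CFK}(S^3,\mathbf{unknot})$, together with the homotopy equivalence $\widehat{CFA}(T_0)\boxtimes \widehat{CFDA}(\mathbf{AZ})\simeq \widehat{CFA}(T_0)$ induced by $T_0 \cup \mathbf{AZ} \simeq T_0$, both $\mathbf{id}\boxtimes G_\infty$ and $\mathbf{id}\boxtimes (G_\infty \circ (1 + \Phi^D_\Y \Psi^D_\Y))$ pair to the saddle cobordism map $F_{\mathbf{unknot}}$, while both the zero morphism and the sum pair to zero (using Lemma \ref{lem:bypass} and $\Phi_{\mathbf{unknot}} = \Psi_{\mathbf{unknot}} = 0$). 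On the other hand, $f \circ G_\infty \circ \iota_\Y$ paired with $\widehat{CFA}(T_0)$ factors through the saddle map $G_\infty$ along with homotopy equivalences, so it induces a nontrivial map on Floer homology. Hence $f \circ G_\infty \circ \iota_\Y$ cannot be zero or the sum, and the lemma follows. The main obstacle will be to pin down the paired map precisely enough to conclude its nontriviality---this amounts to tracking the pairings along with the $\iota_{T_0}$-identification to confirm that the composition realizes the saddle cobordism from $\widehat{HFL}(S^3,\mathbf{unknot}\cup\mathbf{unknot})$ to $\widehat{HFK}(S^3,\mathbf{unknot})$, which is known to be surjective.
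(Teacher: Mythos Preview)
Your overall strategy matches the paper's: use the bidegree and \Cref{lem:basis} (transported through the equivalence given by tensoring with $\widehat{CFDA}(\mathbf{AZ})$) to reduce to four candidates, then pair with the unknot complement to eliminate the two ``bad'' ones. However, your final step contains a genuine gap. You assert that after pairing with $\widehat{CFA}(T_0)$ the morphism $f\circ G_\infty\circ\iota_\Y$ ``factors through the saddle map $G_\infty$ along with homotopy equivalences,'' but $f$ is \emph{not} a homotopy equivalence: its source $\widehat{CFD}(T_\infty,\nu)$ has one generator while its target $\widehat{CFDA}(\mathbf{AZ})\boxtimes\widehat{CFD}(T_\infty,\nu)$ has five. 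The entire content of the nontriviality claim is that $\mathbf{id}\boxtimes f$, after pairing with $\widehat{CFA}(T_0)$ and after applying the identification $\widehat{CFA}(T_0)\boxtimes\widehat{CFDA}(\mathbf{AZ})\simeq\widehat{CFA}(T_0)$, becomes the identity on $\widehat{CFK}(S^3,\mathbf{unknot})\simeq\mathbb{F}_2$ rather than zero. A priori there is no reason for this, and your proposal does not verify it.

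The paper fills this gap by an explicit computation: it takes the five-generator model $M_D$ for $\widehat{CFDA}(\mathbf{AZ})\boxtimes\widehat{CFD}(T_\infty,\nu)$ with differential as in the displayed formula for $\partial a,\dots,\partial e$, box-tensors with $\widehat{CFA}(T_0)$ (generated by $y$ with $m_{3+i}(y,\rho_2,\rho_{12},\dots,\rho_{12},\rho_1)=y$), and checks directly that $y\boxtimes a$ is the unique nonzero homology class and that it is the image of the generator of $\widehat{CFK}(S^3,\mathbf{unknot})$ under $\mathbf{id}\boxtimes f$. This is exactly the ``main obstacle'' you flag, and it cannot be bypassed by appealing only to the factorization you describe; you need either this computation or an equivalent one. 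Once this is in hand, the rest of your argument is correct and coincides with the paper's.
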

\begin{proof}
For simplicity, write $g=f\circ G_\infty \circ \iota_{\Y}$. Then for any knot $K$, we have an induced map 
\[
\begin{split}
    \widehat{CFL}(S^3,K\cup\mathbf{unknot}) &\simeq \widehat{CFA}(S^3 \backslash K)\boxtimes \widehat{CFDA}(\overline{\mathbf{AZ}})\boxtimes \widehat{CFDA}(\mathbf{AZ}) \boxtimes \widehat{CFD}(\Y_\infty) \\
    &\xrightarrow{\mathbf{id}\boxtimes \mathbf{id}\boxtimes g} \widehat{CFA}(S^3 \backslash K)\boxtimes \widehat{CFDA}(\overline{\mathbf{AZ}})\boxtimes \widehat{CFDA}(\mathbf{AZ}) \boxtimes \widehat{CFD}(T_\infty) \\
    &\simeq \widehat{CFL}(S^3,K),
\end{split}
\]
which we will denote as $g_K$. Then, by construction, we have 
\[
g_K \sim \tilde{f}\circ F_K \circ \iota_{K\cup\mathbf{unknot}},
\]
where $\tilde{f}$ is the map defined as 
\[
\begin{split}
    \widehat{CFK}(S^3,K) &\simeq \widehat{CFA}(S^3 \backslash K)\boxtimes \widehat{CFD}(T_\infty,\nu) \\
    &\xrightarrow{\iota^{-1}_{S^3\backslash K}\boxtimes f} \widehat{CFA}(S^3 \backslash K)\boxtimes \widehat{CFDA}(\overline{\mathbf{AZ}})\boxtimes \widehat{CFDA}(\mathbf{AZ}) \boxtimes\widehat{CFD}(T_\infty,\nu) \\
    &\xrightarrow{\mathbf{id}\boxtimes \Omega\boxtimes \mathbf{id}} \widehat{CFA}(S^3 \backslash K)\boxtimes \widehat{CFD}(T_\infty,\nu)\simeq \widehat{CFK}(S^3,K),
\end{split}
\]
where $\Omega$ is the homotopy equivalence 
\[
\widehat{CFDA}(\overline{\mathbf{AZ}})\boxtimes \widehat{CFDA}(\mathbf{AZ})\simeq \widehat{CFDA}(\mathbb{I}),
\]
which is unique up to homotopy due to homotopy rigidity \cite[Lemma 4.4]{hendricks2019involutivebordered}. It is easy to check, using a bypass relation, that $F_K \circ \iota_{K\cup\mathbf{unknot}} \sim \iota_K \circ F_K$. Hence we get 
\[
g_K \sim \tilde{f} \circ \iota_K \circ F_K.
\]

We now consider the case when $K$ is the unknot. Then the 0-framed knot complement $S^3 \backslash K$ is the 0-framed solid torus $T_0$. Recall that $\widehat{CFDA}(\mathbf{AZ}) \boxtimes\widehat{CFD}(T_\infty,\nu)$ is homotopic to the type-D module $M_D$ generated by $a,b,c,d,e$, where the differential is given as in \Cref{eqn:diff}, and the image of the generator $x$ of $\widehat{CFD}(T_\infty,\nu)$ is $a$. This means that there exists a type-D homotopy equivalence 
\[
h_D:\widehat{CFDA}(\mathbf{AZ})\boxtimes \widehat{CFD}(T_\infty,\nu) \rightarrow M_D
\]
such that $(h_D \circ f)(x)=a$. On the other hand, the type-A module $\widehat{CFA}(S^3 \backslash K)$, which is homotopy equivalent to $\widehat{CFDA}(\overline{\mathbf{AZ}})\boxtimes \widehat{CFDA}(\mathbf{AZ})$ via $\iota_{S^3 \backslash K}$, is generated by one element, say $y$, and the $A_\infty$ operations are given by 
\[
m_{3+i} (y,\rho_2,\overbrace{\rho_{12},\cdots,\rho_{12}}^{i},\rho_1)=y \text { for each } i\ge 0.
\]
Hence the chain map 
\[
\begin{split}
    m:\widehat{CFK}(S^3,K) &\simeq \widehat{CFA}(S^3 \backslash K)\boxtimes \widehat{CFD}(T_\infty,\nu) \\
    &\xrightarrow{\mathbf{id}\boxtimes f} \widehat{CFA}(S^3\backslash K)\boxtimes \widehat{CFDA}(\mathbf{AZ}) \boxtimes \widehat{CFD}(T_\infty,\nu) \\
    &\xrightarrow{\mathbf{id}\boxtimes h_D} \widehat{CFA}(S^3 \backslash K)\boxtimes M_D
\end{split}
\]
maps the generator $1$ of $\widehat{CFK}(S^3,K)\simeq \mathbb{F}_2$ to $y\boxtimes a$. Furthermore, the chain complex 
\[
\widehat{CFA}(S^3 \backslash K)\boxtimes M_D
\]
is generated by three elements, namely $y\boxtimes a$, $y\boxtimes c$, and $y\boxtimes d$ and the differential is given by 
\[
\partial (y\boxtimes c)=m_3(y,\rho_2,\rho_1)\boxtimes e=y\boxtimes e.
\]
Hence there exists a homotopy equivalence 
\[
h_C:\widehat{CFA}(S^3\backslash K)\boxtimes M_D \rightarrow \widehat{CFK}(S^3,K)
\]
such that $(h_C\circ m)(1)=1$. However, since $\iota_{S^3 \backslash K}$ is a homotopy equivalence and any homotopy autoequivalence of $\widehat{CFK}(S^3,K)\simeq \mathbb{F}_2$ is homotopic to the identity, we should have 
\[
\tilde{f}(1)=((\mathbf{id}\boxtimes \Omega\boxtimes \mathbf{id})\circ (\iota^{-1}_{S^3 \backslash K}\boxtimes \mathbf{id})\circ (\mathbf{id}\boxtimes h^{-1}_D)\circ h^{-1}_C)((h_C\circ m)(1))=(h_C\circ m)(1)=1.
\]
Therefore $\tilde{f}$ is homotopic to the identity map. Since it is obvious that $\iota_{\mathbf{unknot}}$ is also homotopic to the identity map, we get 
\[
g_{\mathbf{unknot}} \sim F_{\mathbf{unknot}},
\]
which implies that $g$ itself should not be nullhomotopic. Since box-tensoring with $\widehat{CFDA}(\mathbf{AZ})$ is an equivalence of categories and $g$ clearly has bidegree $(0,0)$, we can apply \Cref{lem:basis} to see that $g$ should be chain homotopic to one of the following three morphisms:
\[
\mathbf{id}\boxtimes G_\infty,\,\mathbf{id}\boxtimes (G_\infty \circ \Phi^D_{\Y}\Psi^D_{\Y}),\,\mathbf{id}\boxtimes (G_\infty \circ (1+\Phi^D_{\Y}\Psi^D_{\Y})).
\]
Suppose that $g$ is homotopic to $\mathbf{id}\boxtimes (G_\infty \circ \Phi^D_{\Y}\Psi^D_{\Y})$. Then we should have 
\[
g_K \sim F_K \circ \Phi_{K\cup\mathbf{unknot},K}\Psi_{K\cup\mathbf{unknot},K} \sim \Phi_K\Psi_K \circ F_K.
\]
We have already seen that $g_{\mathbf{unknot}}$ is not nullhomotopic, which is a contradiction since $\Phi_{\mathbf{unknot}}$ and $\Psi_{\mathbf{unknot}}$ are both nullhomotopic. Therefore $g_K$ is homotopic to either $\mathbf{id}\boxtimes G_\infty$ or $\mathbf{id}\boxtimes (G_\infty \circ (1+\Phi^D_{\Y} \Psi^D_{\Y}))$, as desired.
\end{proof}

Now we are ready to prove \Cref{thm:mainthm4}.
\begin{proof}[Proof of \Cref{thm:mainthm4}]
Denote the homotopy autoequivalence of $\widehat{CFK}(S^3,K)$ defined in the theorem as $\tilde{\iota}_K$. By \Cref{lem:morphismf}, we know that $f\circ G_\infty$ is homotopic to either $(\mathbf{id}\boxtimes G_\infty)\circ \iota^{-1}_{\Y}$ or $(\mathbf{id}\boxtimes G_\infty)\circ \iota^{-1}_{\Y}\circ (1+\Phi^D_{\Y}\Psi^D_{\Y})$, so we should have either
\[
\tilde{\iota}_K \circ F_K \sim F_K \circ \iota^{-1}_{K\cup\mathbf{unknot}} \sim \iota^{-1}_K \circ F_K
\]
or
\[
\begin{split}
\tilde{\iota}_K \circ F_K &\sim F_K \circ \iota^{-1}_{K\cup\mathbf{unknot}}\circ (1+\Phi_{K\cup\mathbf{unknot},K}\Psi_{K\cup\mathbf{unknot},K}) \\
&\sim \iota^{-1}_K \circ (1+\Phi_K\Psi_K) \circ F_K \\
&\sim \iota^{-1}_K \circ \iota^2 _K \circ F_K \\
&\sim \iota_K \circ F_K.
\end{split}
\]
Since the trivial saddle cobordism from $K\cup\mathbf{unknot}$ to $K$ clearly has a right inverse, its associated cobordism map $F_K$ admits a homotopy right inverse. Hence, by precomposing with the homotopy right inverse of $F_K$, we see that $\tilde{\iota}_K$ should be homotopic to either $\iota_K$ or $\iota^{-1}_K$, as desired.
\end{proof}

\begin{rem}
\label{rem:knotinv}
The proof of the pairing theorem (\Cref{eqn:morpairing}) also works in the following way:
\[
\mathbf{Mor}(\widehat{CFD}(T_\infty,\nu),\widehat{CFD}(S^3 \backslash K))\simeq \widehat{CFK}(S^3,K).
\]
The reason is that, although $\widehat{CFD}(T_\infty,\nu)$ is not homotopy equivalent to $\widehat{CFDA}(\mathbf{AZ})\boxtimes \widehat{CFD}(T_\infty,\nu)$, $\widehat{CFK}(S^3,K)$ is homotopy equivalent to $\widehat{CFDA}(\mathbf{AZ})\boxtimes \widehat{CFK}(S^3,K)$. Hence, given an involution $\iota_M\in \mathbf{Inv}_D(S^3 \backslash K)$, one can consider the following map
\[
\begin{split}
    \widehat{CFK}(S^3,K) &\simeq \mathbf{Mor}(\widehat{CFD}(T_\infty,\nu),\widehat{CFD}(S^3 \backslash K)) \\
    &\xrightarrow{\mathbf{id}\boxtimes}\mathbf{Mor}(\widehat{CFDA}(\mathbf{AZ})\boxtimes \widehat{CFD}(T_\infty,\nu),\widehat{CFDA}(\mathbf{AZ})\boxtimes \widehat{CFD}(S^3 \backslash K)) \\
    &\xrightarrow{h\mapsto \iota_{S^3 \backslash K} \circ h \circ f} \mathbf{Mor}(\widehat{CFD}(T_\infty,\nu),\widehat{CFD}(S^3 \backslash K)) \simeq \widehat{CFK}(S^3,K)
\end{split}
\]
Here, $f$ is the type-D morphism given in \Cref{thm:mainthm4}. Following the proof of \Cref{thm:mainthm4}, it is straightforward to see that the above map is homotopic to either $\iota_K$ or $\iota^{-1}_K$. This gives a more applicable interpretation of \Cref{thm:mainthm4}, since type-D modules are easier to work with than type-A modules.
\end{rem}

\begin{exmp}
\label{exmp:trefoil}
Let $K$ be the left-handed trefoil. The knot Floer chain complex $CFK_{UV}(S^3,K)$ is generated by three elements $a,b,c$, which lie on bidegrees $(0,1),(1,2),(-1,0)$, respectively, and the differential is given as follows.
\[
\xymatrix{
b & & a\ar[ll]_{U}\ar[dd]^{V} \\
& & \\
& & c
}
\]
It is known \cite[Section 8]{hendricks2017involutive} that the action of $\iota_K$ is given by the reflection along the diagonal, i.e. fixes $a$ and exchanges $b$ and $c$.

On the bordered side, we know from \cite[Theorem 11.26]{lipshitz2018bordered} that the Floer chain complex of $K$ determines $\widehat{CFD}(S^3 \backslash K)$. Thus we see that $\widehat{CFD}(S^3 \backslash K)$ is generated by 7 elements $e_0,f_0,f_1,g_0,g_1,h_1,k_1$, where the differential is given as follows.
\[
\xymatrix{
f_0 \ar[d]_{\rho_1} & f_1 \ar[l]_{\rho_2} & e_0 \ar[l]_{\rho_3} \ar[d]^{\rho_1} \\
k_1 & & g_1 \\
& h_1 \ar[ul]_{\rho_{23}} & g_0 \ar[l]^{\rho_3} \ar[u]_{\rho_{123}}
}
\]
It can be seen via straightforward computation that there are only two homotopy classes of degree-preserving type-D endomorphisms of $\widehat{CFD}(S^3 \backslash K)$, represented by 0 and $\mathbf{id}$. Hence $\widehat{CFD}(S^3 \backslash K)$ is \emph{homotopy-rigid}, i.e. it admits a unique homotopy class of homotopy autoequivalences. This means that there exists only one homotopy class of homotopy equivalences
\[
\widehat{CFDA}(\mathbf{AZ})\boxtimes \widehat{CFD}(S^3 \backslash K)\rightarrow \widehat{CFD}(S^3 \backslash K).
\]
Since one of such homotopy equivalences can be computed explicitly using the proof of \cite[Theorem 37]{hanselman2018heegaard}, we deduce that it also gives an explicit description of $\iota_{S^3 \backslash K}$. Applying \Cref{thm:mainthm4} then recovers the hat-flavored action
\[
\iota_K(a)=a,\,\iota_K(b)=c,\,\iota_K(c)=b
\]
in $\widehat{CFK}(S^3,K)$, which is consistent with the action of $\iota_K$ on $CFK_{UV}(S^3,K)$.
\end{exmp}

\begin{rem}
In general, one can prove that $\widehat{CFD}(S^3 \backslash K)$ is homotopy-rigid whenever $K$ is an L-space knot, which means that one can explicitly compute $\iota_{S^3 \backslash K}$ for such knots by computing the box tensor product $\widehat{CFDA}(\mathbf{AZ})\boxtimes \widehat{CFD}(S^3 \backslash K)$ and finding a sequence of homotopy equivalences which connects it to $\widehat{CFD}(S^3 \backslash K)$. One can check using \Cref{thm:mainthm4} that the hat-flavored action of $\iota_K$ is given by ``reflection with respect to the diagonal''. This is consistent with the action of $\iota_K$ on $CFK_{UV}(S^3,K)$, which was first determined in \cite[Section 7]{hendricks2017involutive}.
\end{rem}

\Cref{thm:mainthm4} can also be used in the reverse way to compute $\iota_{S^3 \backslash K}$ from $\iota_K$, as shown in \Cref{exmp:figeight}.
\begin{exmp}
\label{exmp:figeight}
Let $K$ be the figure-eight knot. The knot Floer chain complex $CFK_{UV}(S^3,K)$ is generated by five elements $a,b,c,d,x$, which lie on bidegrees $(0,0),(1,1),(-1,-1),(0,0),(0,0)$, respectively, and the differential is given as follows.
\[
\xymatrix{
b \ar[dd]_{V} & & a\ar[ll]_{U}\ar[dd]^{V} & & \\
& & & \oplus & x \\
d & & c\ar[ll]^{U} &&
}
\]
Furthermore, the involution $\iota_K$ is given by 
\[
\iota_K(a)=a+x,\,\iota_K(b)=c,\,\iota_K(c)=b,\,\iota_K(d)=d,\,\iota_K(x)=x+d.
\]
On the other hand, $\widehat{CFD}(S^3 \backslash K)$ is generated by 9 elements $e_0,f_0,g_0,h_0,e_1,f_1,g_1,h_1,z$, where the differential is given as follows.
\[
\xymatrix{
f_0 \ar[d]_{\rho_1} & e_1 \ar[l]_{\rho_2} & e_0 \ar[l]_{\rho_3} \ar[d]^{\rho_1} & & \\
f_1 & & h_1 & \oplus & z \ar@(ur,dr)^{\rho_{12}} \\
g_0 \ar[u]^{\rho_{123}} & g_1\ar[l]^{\rho_2} & h_0\ar[l]^{\rho_3} \ar[u]_{\rho_{123}} & &
}
\]

Unlike the trefoil case (covered in \Cref{exmp:trefoil}), the type-D module $\widehat{CFD}(S^3 \backslash K)$ is not homotopy-rigid, so we cannot find a random homotopy equivalence between $\widehat{CFDA}(\mathbf{AZ})\boxtimes \widehat{CFD}(S^3 \backslash K)$ and $\widehat{CFD}(S^3 \backslash K)$ and claim that it is homotopic to $\iota_{S^3 \backslash K}$. Denote by $M$ and $N$ the type-D submodule of $\widehat{CFD}(S^3 \backslash K)$ generated by $z$ and everything else (i.e. $e_0,\cdots,h_1$), respectively, so that we have a splitting 
\[
\widehat{CFD}(S^3 \backslash K) \simeq M\oplus N.
\]
Using the proof of \cite[Theorem 37]{hanselman2018heegaard}, one can explicitly construct homotopy equivalences 
\[
\begin{split}
    F_M &: \widehat{CFDA}(\mathbf{AZ})\boxtimes M \rightarrow M, \\
    F_N &: \widehat{CFDA}(\mathbf{AZ})\boxtimes N \rightarrow N.
\end{split}
\]
Consider $F=F_M \oplus F_N:\widehat{CFDA}(\mathbf{AZ})\boxtimes \widehat{CFD}(S^3 \backslash K)\rightarrow \widehat{CFD}(S^3 \backslash K)$. Then $F\circ \iota^{-1}_{S^3 \backslash K}$ is a homotopy autoequivalence of $\widehat{CFD}(S^3 \backslash K)$. Recall that we have a pairing theorem 
\[
\mathbf{Mor}(\widehat{CFD}(S^3\backslash K),\widehat{CFD}(S^3 \backslash K))\simeq \widehat{CF}(-(S^3 \backslash K) \cup (S^3 \backslash K)) \simeq \widehat{CF}(S^3 _{0}(K\sharp -K)).
\]
Since $F$ is a homotopy equivalence, it should correspond to a nontrivial element with absolute $\mathbb{Q}$-grading $\frac{1}{2}$ in $\widehat{HF}(S^3 _0 (K \sharp -K),\mathfrak{s}_0)$, where $\mathfrak{s}_0$ denotes the unique spin structure on $S^3 _0 (K \sharp -K)$. The integral surgery formula for knots \cite[Theorem 1.1]{ozsvath2008knot} tells us that the $\frac{1}{2}$-graded piece $V_{\frac{1}{2}}$ of $\widehat{HF}(S^3 _0 (K \sharp -K),\mathfrak{s}_0)$ is 5-dimensional.

Now we construct an explicit basis of $V_{\frac{1}{2}}$ in terms of type-D endomorphisms of $\widehat{CFD}(S^3 \backslash K)$. Consider the type-D endomorphisms $K_1,K_2,K_3$ of $\widehat{CFD}(S^3 \backslash K)$, defined as 
\[
\begin{split}
    K_1(e_0)=z,\,K_1(h_1)=\rho_2 z,\, & K_1(\text{everything else})=0, \\
    K_2(z)=g_0+\rho_3 f_1,\, & K_2(\text{everything else})=0, \\
    K_3(z)=z,\, & K_3(\text{everything else})=0.
\end{split}
\]
We claim that the type-D morphisms $\mathbf{id}$, $K_1$, $K_2$, $K_2 \circ K_1$, and $K_3$ are linearly independent up to homotopy and thus form a basis of $V_{\frac{1}{2}}$. To prove the claim, we take a tensor product with $\widehat{CFA}(T_\infty,\nu)$, and consider the maps $\mathbf{id}\boxtimes K_1$ and $\mathbf{id}\boxtimes K_2$, which are now considered as chain endomorphisms of $\widehat{CFK}(S^3,K)$. One can easily see that 
\[
\begin{split}
(\mathbf{id}\boxtimes K_1)(a)=x,\, & (\mathbf{id}\boxtimes K_1)(\text{everything else})=0, \\
(\mathbf{id}\boxtimes K_2)(x)=d,\, & (\mathbf{id}\boxtimes K_2)(\text{everything else})=0, \\
(\mathbf{id}\boxtimes K_3)(x)=x,\, & (\mathbf{id}\boxtimes K_3)(\text{everything else})=0.
\end{split}
\]
Hence we see that $\mathbf{id}\boxtimes g$ for $g=\mathbf{id},K_1,K_2,K_2\circ K_1,K_3$ induce linearly independent endomorphisms of $\widehat{HFK}(S^3,K)$, and so the claim is proven.

Given a type-D morphism $m:\widehat{CFDA}(\mathbf{AZ})\boxtimes \widehat{CFD}(S^3\backslash K)\rightarrow \widehat{CFD}(S^3 \backslash K)$, we define an endomorphism $E_m$ of $\widehat{CFK}(S^3,K)$ as follows.
\[
\begin{split}
    E_m : \widehat{CFK}(S^3 ,K) &\simeq \mathbf{Mor}(\widehat{CFD}(T_\infty,\nu),\widehat{CFD}(S^3 \backslash K)) \\
    &\xrightarrow{\mathbf{id}\boxtimes} \mathbf{Mor}(\widehat{CFDA}(\mathbf{AZ})\boxtimes \widehat{CFD}(T_\infty,\nu),\widehat{CFDA}(\mathbf{AZ})\boxtimes \widehat{CFD}(S^3 \backslash K)) \\
    &\xrightarrow{h\mapsto m \circ  h \circ f} \mathbf{Mor}(\widehat{CFD}(T_\infty,\nu),\widehat{CFD}(S^3 \backslash K)) \simeq \widehat{CFK}(S^3,K)
\end{split}
\]
Here, $f$ denotes the type-D morphism appearing in \Cref{thm:mainthm4}. Then a manual computation tells us that, for the homotopy equivalence $F$ described above, the endomorphism $E_F$ acts on $\widehat{CFK}(S^3,K)$ by
\[
a\mapsto a,\,b\mapsto c,\,c\mapsto b,\,d\mapsto d,\,x\mapsto x.
\]
Comparing this with $\iota_K$, we see that $\mathbf{id}\boxtimes (F\circ \iota^{-1}_{S^3 \backslash K})$ acts on $\widehat{CFK}(S^3,K)$ by 
\[
a\mapsto a+x,\,x\mapsto x+d,\,b\mapsto b,\,c\mapsto c,\,d\mapsto d.
\]
Since $F\circ \iota^{-1}_{S^3 \backslash K}$ is an element of $V_{\frac{1}{2}}$, which is generated by $\mathbf{id}$, $K_1$, $K_2$, $K_2 \circ K_1$, and $K_3$, we deduce that 
\[
\iota_{S^3 \backslash K} \sim (\mathbf{id} +K_1 + K_2)\circ F.
\]
\end{exmp}

\bibliographystyle{amsalpha}
\bibliography{main}
\end{document}